\def\be{\begin{equation}}
\def\ee{\end{equation}}
\def\bea{\begin{align}}
\def\eea{\end{align}}
\def\bea*{\begin{align*}}
\def\eea*{\end{align*}}
\def\d{\mathrm{d}}
\def\om{\omega}
\def\Om{\Omega}
\def\mo{\mathfrak{o}}
\def\pmo{\partial_\mo}
\def\dmo{\delta_\mo}
\def\vt{\vartheta}
\def\uv{u}
\def\ov{w}
\def\uvgb{\uv^\gamma_\beta}
\def\ovgb{\ov_\gamma^\beta}
\def\uvg{\uv^\gamma}
\def\ovg{\ov_\gamma}
\def\vep{\varepsilon}
\def\bG{\bf{G}}
\def\metric{\varrho}
\def\NT{\lambda}
\def\umo{\underline{\mo}}
\def\omo{\overline{\mo}}
\def\ux{\underline{x}}
\def\ox{\overline{x}}
\def\uz{\underline{z}}
\def\oz{\overline{z}}
\def\bmogbe{\boldo^\gamma_{\vep}}
\def\bxgbe{\boldx^\gamma_{\vep}}
\def\omogbe{\omo^\gamma_{\vep}}
\def\oxgbe{\ox^\gamma_{\vep}}
\def\umogbe{\umo^\gamma_{\vep}}
\def\uxgbe{\ux^\gamma_{\vep}}
\theoremstyle{plain}
\newtheorem{theorem}{Theorem}[section]
\newtheorem{lemma}[theorem]{Lemma}
\newtheorem{proposition}[theorem]{Proposition}
\newtheorem{assumption}[theorem]{Assumption}
\newtheorem{definition}[theorem]{Definition}
\newtheorem{example}[theorem]{Example}
\newtheorem{remark}[theorem]{Remark}
\DeclareMathOperator{\E}{\mathbb{E}}
\DeclareMathOperator{\F}{\mathbb{F}}
\DeclareMathOperator{\N}{\mathbb{N}}
\DeclareMathOperator{\Q}{\mathbb{Q}}
\DeclareMathOperator{\R}{\mathbb{R}}
\DeclareMathOperator{\SSS}{\mathbb{S}}
\DeclareMathOperator{\calC}{\mathcal{C}}
\DeclareMathOperator{\sD}{\mathcal{D}}
\DeclareMathOperator{\calF}{\mathcal{F}}
\DeclareMathOperator{\calO}{\mathcal{O}}
\DeclareMathOperator{\frakS}{\mathfrak{S}}
\DeclareMathOperator{\frakm}{\mathfrak{m}}
\DeclareMathOperator{\boldo}{\bf{o}}
\DeclareMathOperator{\boldx}{\bf{x}}
\DeclareMathOperator{\boldz}{\bf{z}}
\DeclareMathOperator{\boldI}{\bf{I}}
\def\sA{{\mathcal A}}
\def\sC{{\mathcal C}}
\def\cF{{\mathcal F}}
\def\cO{{\mathcal O}}
\def\sA{{\mathscr{A}}}
\def\sB{{\mathscr{B}}}
\def\sC{{\mathscr{C}}}
\def\sD{{\mathscr{D}}}
\def\sE{{\mathscr{E}}}
\def\sH{{\mathscr{H}}}
\def\sI{{\mathscr{I}}}
\def\sL{{\mathscr{L}}}
\def\sM{{\mathscr{M}}}
\def\sP{{\mathscr{P}}}
\def\sR{{\mathscr{R}}}
\def\sU{{\mathscr{U}}}
\def\sX{{\mathscr{X}}}
\numberwithin{equation}{section}
\date{\vspace{-1em}\normalsize{\today}}
\title{
%1. Controlled Occupied Processes and their Viscosity Theory\\[1em]

Controlled Occupied Processes and Viscosity Solutions \\[1em]

% Occupied PDEs and Viscosity Solutions \\[1em]

% A Viscosity Theory of Nonlinear Occupied PDEs \\[1em]

}
\author{H. Mete Soner\footnote{Department of Operations Research and Financial
Engineering, Princeton University, Princeton, NJ, 08540,  USA. Email: 
{\tt soner@princeton.edu}. Research partially supported by the National Science Foundation grant DMS 2406762.\\[-0.75em]}
\and Valentin Tissot-Daguette\footnote{Quantitative Research, Bloomberg L.P., New York, NY, 10022, USA. Email: 
{\tt vtissotdague@bloomberg.net}\\[-0.75em]}
\and Jianfeng Zhang\footnote{Department of Mathematics,
University of Southern California, Los Angeles, CA, 90089,  USA. Email: 
{\tt jianfenz@usc.edu}. Research partially supported by the National Science Foundation grant DMS 2205972. } }
\date{\today}
\newcommand{\mc}[1]{\textcolor{purple}{#1}} %. comments by Mete
\begin{document}
\maketitle
\vspace{5pt}

\abstract{ 
We consider the optimal control of occupied processes
which record all positions of the state process. Dynamic programming
yields nonlinear equations on the space of positive measures.
We develop the viscosity theory for this infinite dimensional
parabolic \textit{occupied} PDE by proving a comparison result
between sub and supersolutions, and thus provide a 
characterization of the value function as the unique 
viscosity solution.  Toward this proof, an extension  of the celebrated 
Crandall-Ishii-Lions (second order) Lemma to this setting,
as well as finite-dimensional approximations,
is established.
Examples including the occupied heat equation,
and pricing PDEs  of financial derivatives contingent on the occupation measure
are also discussed.

\iffalse
We consider stochastic control problems where  
the state dynamics and objective functional depend on the 
occupation flow of the controlled process. 
We then expand on a viscosity theory for the associated dynamic programming equation. 
In particular, we show  that the value function of the control problem is the unique viscosity solution  of a parabolic \textit{occupied} PDE. 
This is achieved through a comparison principle,    established 
using cylindrical functions, coercive approximations,   
and an extension of   
Crandall-Ishii-Lions lemma to the Wasserstein space. 
Examples are finally discussed,  including the occupied heat equation and pricing PDEs  of financial derivatives contingent on the occupation measure.    
}
\fi

\vspace{3mm}
\textbf{Keywords:}  Stochastic optimal control, occupation  flow, occupied PDEs, viscosity solutions
\\[-0.5em]

\textbf{Mathematics Subject Classification}:  
49L12, % Hamilton-Jacobi equations in optimal control and differential games
35K55,  %	Nonlinear parabolic equations
35R15, % 	PDEs on infinite-dimensional (e.g., function) spaces (= PDEs in infinitely many variables) 
60J55, % Local time and additive functionals
93E20 % Optimal stochastic control
%60J25, % Continuous-time Markov processes on general state spaces

% OTHERS
%35K10, % Second-order parabolic equations
%60H30 %Applications of stochastic analysis (to PDEs, etc.)
%91G20 % Financial Derivatives

\section{Introduction}
This paper  introduces a class of path-dependent stochastic control problems involving the occupation 
measure and develops a viscosity theory for the associated dynamic programming equation. 
We consider the following control problem, 
\begin{align}
&\inf_{\alpha \in \sA}\E^{\Q}
[\int_{0}^{T} \ell(\calO_{t}^{\alpha},X_{t}^{\alpha}, \alpha_t) dt  
+ g(\calO_{T}^{\alpha},X_{T}^{\alpha})],    \label{eq:controlIntro} \\[1em] % \ | \ (\calO_0^{\alpha},X_0^{\alpha})=(\mo,x)
 &\d X_t^{\alpha} = b(\calO_t^{\alpha},X_t^{\alpha},\alpha_t) \d t 
  + \sigma(\calO_t^{\alpha},X_t^{\alpha},\alpha_t) \d W_t,
    \label{eq:OSDEIntro}
\end{align}
where $\calO^{\alpha}_t = \int_0^t \delta_{X_s^{\alpha}}ds$ is the 
\textit{standard time occupation flow of $X^{\alpha}$}. The latter is a measure-valued process 
that describes the cumulative time spent by $X^{\alpha}$ in arbitrary regions. 
Other clocks, possibly stochastic,  may be used in the definition of $\calO^{\alpha}$; see \cref{sec:control}.  
The pair $(\calO^{\alpha},X^{\alpha})$, referred to as the \textit{occupied process} \cite{Tissot,TissotThesis}, 
induces a Markovian framework that lies strictly  between 
 the classical (path-independent) setting, 
and  fully path-dependent models. 

Our main result characterizes the value function associated to \eqref{eq:controlIntro} 
as the unique viscosity solution to a  parabolic   \textit{occupied PDE} 
which {in standard time takes the form 

    \begin{equation}\label{eq:OPDE}
       -\pmo u + \sH(\mo,x,\nabla u,\nabla^2u) = 0.
  \end{equation}
Here, the variable $\mo$  represents the value of the occupation flow $\cO^{\alpha}$ and lives in 
the linear space $\sM$ of finite Borel measures. 
Importantly, $\mo$ 
generalizes  
the  time variable  in parabolic PDEs, and 
the \textit{occupation derivative}  $\pmo$ in \eqref{eq:OPDE} 
replaces the usual time derivative. 
As we argue in \cref{ssec:occDer}, this differential is directly tied to the dynamics of the occupation flow and corresponds to a local projection of the linear derivative commonly used in mean-field games \cite{CardDelLasLio,CarmonaDelarue,LasryLions}. 

Alternatively, one can formulate the control problem \eqref{eq:controlIntro}  using a pathwise setting and the tools from Dupire's functional Itô calculus 
\cite{DupireFITO}.
The associated dynamic programming  equation is a  
path-dependent PDE  \cite{ZhangEkren2014,EkrenTouziZhangI, EkrenTouziZhangII, PengWang, ZhangBook} which,  
in the canonical setting, reads  
\begin{equation}\label{eq:PPDE}
      -\partial_{t} u + \sH(t,\omega,\partial_{\om} u,\partial^2_{\omega}u) = 0,
\end{equation}
where $u = u(t,\omega)$,  $(t,\omega)\in \R_+ \times \ \Omega$ and
$\Omega = \calC([0,T];\R^d)$ is the space of continuous paths on  $\R^d$. 
The differential $\partial_{t}$ denotes the functional time derivative in the sense of Dupire, 
while $\partial_{\om}$ is the functional space derivative. 
Comparing equations \eqref{eq:OPDE} and \eqref{eq:PPDE}, we observe that 
occupied PDEs  correspond to  the change of coordinates % 
$$
\R_+ \times \  \Omega  \ni 
        (t,\om)  \; \longrightarrow \; (\cO_t^{\alpha}(\omega),X_t^{\alpha}(\omega)) = 
        (\mo,x) \in \sM \times \R^d.
$$
In particular, the path is captured by the occupation flow, and the space variable becomes finite-dimensional. 
Consequently, the space derivatives in \eqref{eq:OPDE} are the classical ones, and the path dependence only 
appears through a first-order term given by the occupation derivative. Many questions related to  
fully nonlinear parabolic path-dependent PDEs remains open, 
especially  regarding the regularity of  solutions. Additionally, 
the comparison principle in this context is highly involved and necessitate additional 
technical tools \cite{EkrenTouziZhangI, EkrenTouziZhangII, Zhou}.  
In contrast, we here provide a  nearly classical proof
of the comparison principle for occupied PDEs, covering a large class of 
path-dependent PDEs. 
       
It is important to note that  the occupation measure 
extracts aggregate features of the path and in particular erases its chronology. 
In fact, one can show the opposite, namely that for fixed $t\ge 0$, any chronology-invariant path functional $\omega \mapsto F(t,\omega)$ %on fixed horizon 
is necessarily  function of the occupation measure \cite[Theorem 2.1]{Tissot}. 
While some path-dependent control problems cannot be %written in the form 
expressed as \eqref{eq:controlIntro}, e.g those involving time delays \cite{Gollmann,SaporitoZhang}, 
the present framework still includes  a vast array of path-dependent problems, notably in finance \cite{Tissot}. 

It is now classical that the notion of viscosity solutions 
is the appropriate framework for nonlinear PDEs,
delivering a complete theory including the existence and uniqueness of such type of solutions 
\cite{userGuide,FS}.  
In our context it is particularly needed, as the regularity of solutions 
has not been established and is not expected for 
non-linear equations.
\iffalse
A viscosity theory is especially needed in our context, even in the linear case. 
Indeed, consider the backward  heat equation
$$
-\partial_\mo u - \frac{1}{2}\triangle u = 0, 
$$
  with terminal condition  $u(\mo,x) = g(\mo,x)$; see \cref{ssec:occDer}. 
 Similar to the path-dependent heat equation  \cite{ZhangBook,CossoRussoHeat},  the above PDE does not admit a classical solution in general, even when the terminal functional $g$ is continuous; see \cref{sec:heat}. [+... ]
\fi
 A central tool in the analysis of 
 second-order  nonlinear PDEs is the well-known Crandall-Ishii-Lions lemma \cite{userGuide} which we generalize to our context. 
 This is achieved by projecting the problem to a finite-dimensional space 
 using \textit{cylindrical functions}, and the corresponding metric induced by 
 the so-called cylindrical norm introduced in \cref{ssec:cylindrical}. 

Classically, the comparison principle for viscosity solutions requires compactness assumptions 
on the domain so that  upper/lower semicontinuous functions attain their maximum/minimum \cite{userGuide,FS}. 
Herein, we note that  the space $\sM \times \R^d$ is not  compact with respect to the product topology. 
However, it is locally compact, and the existence of extrema is guaranteed by introducing \textit{coercive approximations}  
as  discussed in \cref{ssec:coercise}. The lack of compactness is typically reconciled with  the 
 Ekeland-Borwein-Preiss variational principle \cite{Ekeland,BorweinZhu}. % combined with  
 %gauge-type functions.
 While 
 this powerful, yet complex tool may be applied to our context, 
 it is actually not needed here due to the locally compact structure of $\sM\times \R^d $; see again \cref{ssec:coercise}. 
\vspace{10pt}

\textbf{Related Literature.} 
In a recent work, \citet{Bethencourt} study   Brownian particles controlled by their aggregate occupation flow   
with a central example related to  the volume of their  Wiener sausage. 
This highly intriguing model has the tractable linear-quadratic structure.
Hence,  the dynamic programming equation can be solved explicitly and does not necessitate the development of a viscosity theory. 
%Despite the similar topic, this work has  therefore little overlap with the present paper. 
Our setting  shares similarities with stochastic control problems in the Wasserstein space,  
including McKean-Vlasov optimal control \cite{BEZ,BL,BIRS,CGKPR,soneryan,soneryan2, ZhouTouziZhang}
and controlled superprocesses \cite{Ocello}. Additionally, the local nature of the occupation derivative in \eqref{eq:OPDE} 
is reminiscent of mean field games   with local coupling 
\cite{CardaliaguetLocal,LionsSouganidis}, where the dynamics of the system depends on specific values of the marginal densities. Moreover, \citet{BouchardTan} study regularity properties of  linear parabolic PPDEs where the state variable is enlarged with an additive functional $\omega \mapsto \int_0^t \omega_u dA_u$ %$\int_0^t X_u dA_u$ 
for some continuous function $A$ of bounded variation. %With the appropriate time change, 
When  $A$ is non-decreasing, it can be regarded   as a random time process  (see Section \ref{sec:StochasticControl}) and their framework comes as a particular case of the present  setting. %, this additive functional can expressed %the form $\int_0^t X_u dA_u$ for some deterministic finite variation process $A$.    

Infinite dimensional viscosity solutions have been discussed repeatedly in the literature and arise from different contexts,
and the classical book from Fabbri, Gozzi, and Swiech \cite{FGS} provides a comprehensive overview. 
The use of cylindrical functions as first introduced by Lasry and Lions \cite{LLR,lions_3} for nonlinear PDEs in Hilbert spaces,
that we extend to our setting.  
Relatedly, \cite{SwiechWessel} consider a finite-dimensional reduction of control problems in the Wasserstein space
using interacting particle systems, and \citet{BEZ} uses a  supremum/infimum projection  among all measures with equal barycenter. 
The cylindrical norm we introduce is also connected to the distance-like function in \cite{BIRS} pertaining to controlled McKean-Vlasov jump-diffusion processes. 
\vspace{10pt}

\textbf{Structure of the paper. } The notations are outlined in \cref{sec:notation}, where the occupation derivative and cylindrical norm are introduced. %The stochastic control problem is introduced in \cref{sec:control}, to.  
\cref{sec:control} expands on the stochastic control problem, the corresponding viscosity theory,  and summarizes our main results. Technical tools are presented in \cref{sec:tools}, while \cref{sec:CI} focuses on the Crandall-Ishii-Lions Lemma. The comparison principle is stated and proved in \cref{sec:comparison}. We finally discuss examples in \cref{sec:examples} and provide postponed proofs in \cref{sec:Itoproof,sec:Lipschitz}.

\section{Notations } 
\label{sec:notation}

The \emph{ambient space} is the $d$-dimensional Euclidean space $\R^d$ and $\SSS^d$ is the set of all $d$ by $d$ symmetric matrices.  
For $\Gamma \in \SSS^d$, $\text{tr}(\Gamma)$ is the trace of $\Gamma$ and $I$ is the identity matrix.
%The \emph{control set}  $A$ is a closed subset of a Euclidean space.   
We let $\sC:=\sC(\R^d)$ be the set of continuous functions on $\R^d$ and
$\sC_b:= \sC_b(\R^d)$ be the bounded ones.
$\sM=\sM(\R^d)$ is the set of all  signed measures  on $\R^d$
endowed with the weak  $\sigma(\sC_b(\R^d),\sM)$ topology and $\sM_+$ are
the positive ones.   We  denote the total variation of $\mo  \in \sM$ by $|\mo|$ and for $\mo \in \sM_+$,  $|\mo| =\mo(\R^d)$.

For $x \in \R^d$, set $q(x):= \sqrt{1+|x|^2}$ and
$\sC_q:= \{g \in \sC\, :\, |g| \le c\, q\ \text{for some}\ c>0 \}$.
The Wasserstein space
$\sM_q:= \sM_q(\R^d)=\{ \mo \in \sM_+\, :\, \int q(x)\mo(\d x) <\infty\},$ 
is endowed with the weak  $\sigma(\sC_b(\R^d),\sM)$ topology,
despite with this topology $\sM_q$ is not complete. Given $\mo \in \sM_q,\  f \in \sC_q,$ we can define their pairing as  
%Also, the pairing 
$$\mo(f):= \int f(x) \mo(\d x).$$ 
% Then, for any $\mo \in \sM_q$, $f \in \sC_q$, the pairing 
% $\mo(f):= \int f(x) \mo(\d x),$ 
% is well-defined. 
%is well-defined. 
Let $\sD := \sM_q \times \R^d$ be the  \emph{state space} endowed 
with the product topology. For a \emph{finite horizon} $T>0$, we set 
$$\sM_T:=\{\mo \in \sM_q : |\mo| \le T\}, \quad \sD_T:=\{ (\mo,x) \in \sD : |\mo| \le T\}, $$
with  the interior 
 $\mathring{\sD_T}:=\{ (\mo,x) \in \sD : |\mo| < T\}$, and boundary  
 $\partial \sD_T:= \{ (\mo,x) \in \sD : |\mo| = T\}$. 
%For a \emph{finite horizon} $T>0$, we set $\sM_T:=\{\mo \in \sM_q : |\mo| \le T\}$,
% $\sD_T:=\{ (\mo,x) \in \sD : |\mo| \le T\} $, the interior 
% $\mathring{\sD_T}:=\{ (\mo,x) \in \sD : |\mo| < T\}$,  and boundary 
% $\partial \sD_T:= \{ (\mo,x) \in \sD : |\mo| = T\}$. 
The function space $\sU(\sD)$ is the set of \emph{upper semi-continuous} functions on $\sD$, 
and $\sU_b(\sD)$ are the ones that are \emph{bounded from above}.
Analogously, $\sL(\sD)$ is the set of \emph{lower semi-continuous} functions on $\sD$, 
and $\sL_b(\sD)$ are the ones that are \emph{bounded from below}.

\subsection{Occupation Derivative}
\label{ssec:occDer}
A function $\varphi :\sD \to \R$ is called \emph{locally differentiable at $(\mo,x) \in \sD$} if
 \begin{equation}\label{eq:occDerivatives}
    \pmo \varphi(\mo,x):=
\lim_{h \downarrow 0} \, \frac{\varphi(\mo + h\delta_{x},x)-\varphi(\mo,x)}{h} \quad \text{exists and is finite.}
 \end{equation}
  We say that $\varphi$ is \emph{locally differentiable} if \eqref{eq:occDerivatives} holds for all $(\mo,x) \in \sD$ and  
call $\pmo \varphi: \sD\to \R$ the \emph{occupation derivative of $\varphi$} \cite{Tissot}.  The local nature of the occupation derivative comes from the dynamics of the occupation flow given  in \cref{sec:StochasticControl}. 
% \begin{equation}
%    \pmo \varphi:\sD\to \R, \qquad  \pmo \varphi(\mo,x):=\dmo \varphi(\mo,x)(x).
% \end{equation}
  %We note that $\pmo \varphi(\mo,x)$ is the  Gateaux derivative of $\varphi$ in the direction $\delta_x$.  
We also introduce the \emph{linear derivative}  of $\varphi$ \cite{CarmonaDelarue}  and denote it by 
$\dmo \varphi(\mo,x)\in \sC(\R^d)$. We recall for convenience that $\dmo \varphi$ is characterized by the  identity
\begin{equation}
    \varphi(\mo',x)-\varphi(\mo,x)  = \int_{0}^1 \int_{\R^d}\dmo \varphi(\mo+ \eta (\mo'-\mo),x)(y)(\mo'-\mo)(dy)  d\eta, \quad \forall \mo,\mo'\in \sM.
\end{equation}
If $\pmo \varphi$ is continuous in the product topology and satisfies $|\pmo \varphi(\mo,x)|\le C(1+|x|^2)$ for some constant $C>0$, then the linear derivative of $\varphi$ exists and % admits a linear derivative and it is related to 
relates to the occupation derivative through 
\begin{equation}\label{eq:OccvsLinearDerivative}
    \pmo \varphi(\mo,x)=\dmo \varphi(\mo,x)(x), \quad \forall (\mo,x) \in \sD.
\end{equation}
See for instance  \cite[Theorem 2.1]{RenWang}. 
In other words, $\pmo \varphi: \sD\to \R$ is  a local projection of the linear derivative. % onto the $"$current$"$ value $x$. %space argument $x$. 
Finally, $\varphi \in \sC(\sD)$ is said to be in $\sC^{1,2}(\sD)$ if it has 
derivatives $\pmo \varphi$, $\nabla \varphi$, $\nabla^2 \varphi$ that are continuous.

\subsection{Cylindrical Norm}
\label{ssec:cylindrical}

Let $(f_k)_{k\in \N} \subset  \sC^{1}_b(\R^d)$ 
be a separating family
of $\sM$.  Namely, $\mo(f_k) = \mo'(f_k)$ for all $k\in \N$ 
implies that $\mo= \mo'$; see, e.g., \cite[Section 2.1]{Ocello}. 
Suppose that  $f_{0}$ is a scalar multiple of} $\tilde{f}_0 \equiv 1$  and by normalizing, we assume that
\begin{equation}
\label{eq:summable}
\sum_{k} \lVert f_k \rVert_{\sC^1}^2  \le 1,\qquad
\text{where}\qquad
\lVert f \rVert_{\sC^1}:= \lVert f \rVert_{\infty} + \lVert \nabla f \rVert_{\infty} .
\end{equation}
Using this family,  we introduce  the \textit{cylindrical}
and the \emph{parabolic} norms, given respectively   by
$$
\varrho(\mo) := \big( \sum\nolimits_{k} |\mo(f_k)\, |^2\big)^{\tfrac12}, \qquad 
\rho( \mo, x) := \sqrt{\metric^2(\mo)+|x|^2},\qquad
\mo \in \sM, \ x \in \R^d.
$$
 In particular, $\varrho(\mo) = 0$ if and only if $\mo=0$, since  $(f_k)_{k\in \N}$ 
is a separating family
of $\sM$. 
As $|\mo(f_k)| \le \lVert f_k\rVert_{\infty}|\mo|\le  \lVert f_k \rVert_{\sC^1} |\mo|$,
we conclude that $\metric(\mo) \le |\mo|$. 
Hence, the metric induced by  $\metric$  is weaker than the total variation distance. In fact, it is easily seen that  on $\sM_T$, $\varrho$ metrizes the weak topology.  
Moreover, for $\mo \in \sM_+$, we directly calculate that  
\be \label{eq:DerRho}
\pmo \rho^2(o,x) = \dmo \metric^2(\mo)(x)
= 2\, \sum_k \mo(f_k) f_k(x).
\ee

\section{Stochastic Control Problem}\label{sec:StochasticControl}

Consider a filtered probability space  $(\Omega,\calF,\F = (\calF_t)_{t\in [0,T]},\Q)$
satisfying the usual conditions. Let  $X$ be a 
continuous semi-martingale,
and $\Lambda_t\ge 0$ be a \emph{strictly
increasing} continuous adapted process.
Then, the \textit{occupation flow of $X$ with random time $\Lambda$} is
defined by,
\begin{equation}
\label{eq:occFlow}
\cO = (\calO_t)_{t\ge 0}, \quad 
\calO _t(B) :=
\int_0^t\mathds{1}_B (X_s) 
\d \Lambda_s,  \quad B\in \sB(\R^d).
\end{equation} 
Clearly for each $t >0$, the support of $\cO _t$ is included in 
the ball of radius $\sup_{s \in [0,t]} |X_s|$ centered at the origin.
Therefore, $\cO _t \in \sM_q$ and $(\cO _t,X_t) \in \sD$.
We call the pair $(\calO ,X)$ an \textit{occupied process}.
For all the properties of these processes
we refer the reader to  \cite{Tissot}, and the references therein.

Two important examples of $\Lambda$ are the followings:
\begin{itemize}
\item \emph{Standard time occupation flow} corresponds to the
choice $\Lambda_t=t$.
\item Let
$\Lambda_t= \text{tr}(\langle X \rangle_t)$, where $\langle X \rangle = (\langle X^i,X^j \rangle)_{i,j}$
is the covariation matrix of $X$.  Then,
we call $\cO$  the \emph{occupation flow of $X$},
%and is directly connected to 
which generates the local times \cite{RevuzYor,Tissot}.
\end{itemize}

\subsection{Controlled Occupied Processes}\label{sec:control}

Let $A$ be a Borel subset of a Euclidean space and 
the set $\sA$ of \emph{admissible controls} are all predictable
process $\alpha = (\alpha_t)_{t\ge 0}$ taking values in $A$  and 
$\Q \times {\text{Lebesgue}}$ square integrable.
For fixed $\alpha \in \sA$, we let  $(\calO^{\alpha},X^{\alpha})$  be the  
 strong solution of the 
 controlled \textit{occupied stochastic differential equation} (OSDE), 
 \begin{align}
  \d \calO_t^{\alpha} &= \NT(\calO_t^{\alpha},X_t^{\alpha},\alpha_t)
  \  \delta_{X_t^{\alpha}} \d t, 
  \label{eq:OSDEOcc}
  \\  \d X_t^{\alpha} &= b(\calO_t^{\alpha},X_t^{\alpha},\alpha_t) \d t 
  + \sigma(\calO_t^{\alpha},X_t^{\alpha},\alpha_t) \d W_t,
    \label{eq:OSDE}
\end{align}
with given functions $\NT:\sD\times A \to \R_+$, $b:\sD\times A \to \R^d$, $\sigma:\sD\times A \to \R^{d\times d}$ and initial condition $(\calO_0^{\alpha},X_0^{\alpha}) = (\mo,x)\in \sD$.  
First equation implies that $\calO^{\alpha}$ 
is the  occupation flow with random time
$\Lambda_t^{\alpha}= |\mo| + \int_0^t\NT(\calO_s^{\alpha},X_s^{\alpha},\alpha_s)\d s$.
If we let $\NT=\lVert \sigma \rVert_{F}^2 = \text{tr}(\sigma \sigma^{\! \top})$
be the squared Frobenius norm of $\sigma$, then $\cO^\alpha$
is the occupation flow of the process $X^\alpha$.
The choice $\NT \equiv 1$ corresponds to the 
standard time occupation flow of $X^\alpha$.

Following \cite{Tissot}, we make the following definition.
\begin{definition}
\label{def:general}
{\rm{For a normed vector space $(E,|\cdot|)$, we say a function $\varphi: \sD_T \times A \mapsto E$
satisfies}} the growth and Lipschitz conditions with constant $c_*$, 
{\rm{if the followings hold
for every $(\mo,x,a), (\mo',x',a)$ in the set $\sD_T \times A$:}}
\begin{align}
\label{eq:growth}
&|\varphi(\mo,x,a)|  \le c_* (1+\rho(\mo,x)),\\ 
\label{eq:Lipschitz}
&|\varphi(\mo,x,a)-\varphi(\mo',x',a)| \le c_*\rho(\mo-\mo',x-x'). 
\end{align} 
\end{definition}

\begin{lemma}
\label{lem:exists}
Suppose that the coefficients $\NT, b, \sigma$
satisfy the growth and Lipschitz conditions with a constant $c_*$ with the Euclidean and Frobenius norm for  $\NT, b$ and  $\sigma$, respectively. 
Then, there is a unique strong solution of OSDE
\eqref{eq:OSDEOcc}$-$\eqref{eq:OSDE} for any
initial condition $(\mo,x) \in \sD_T$.
\end{lemma}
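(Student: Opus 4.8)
The plan is a Picard iteration adapted to the coupled structure of \eqref{eq:OSDEOcc}--\eqref{eq:OSDE}: with $X^\alpha$ frozen, \eqref{eq:OSDEOcc} is a pathwise (random) ODE valued in $\sM$, and with $\calO^\alpha$ frozen, \eqref{eq:OSDE} is a classical It\^o SDE with adapted coefficients. Since $\NT\ge 0$, the total mass $t\mapsto|\calO^\alpha_t|$ is nondecreasing; after extending $\NT,b,\sigma$ to all of $\sD$ so that the analogues of \eqref{eq:growth}--\eqref{eq:Lipschitz} hold with a constant depending only on $c_*$ (e.g.\ by McShane extension in the metric induced by $\rho$), I would construct the solution on $[0,T]$ and observe that it stays in $\sD_T$, hence solves the original system, before $\tau:=T\wedge\inf\{t:|\calO^\alpha_t|=T\}$. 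Set $(\calO^0_t,X^0_t)\equiv(\mo,x)$ and define inductively
\[
\calO^{n+1}_t=\mo+\int_0^t\NT(\calO^n_s,X^n_s,\alpha_s)\,\delta_{X^n_s}\,\d s,\qquad
X^{n+1}_t=x+\int_0^t b(\calO^n_s,X^n_s,\alpha_s)\,\d s+\int_0^t\sigma(\calO^n_s,X^n_s,\alpha_s)\,\d W_s.
\]
Each $\calO^{n+1}_t$ is a positive measure supported in the ball of radius $\sup_{s\le t}|X^n_s|$, hence in $\sM_q$, and standard linear-growth/BDG/Gr\"onwall arguments give a priori bounds $\sup_n\E[\sup_{t\le T}(\metric^2(\calO^n_t)+|X^n_t|^p)]<\infty$ for every $p\ge 2$.

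The only genuinely new estimate is for the measure increment, which I would carry out in the cylindrical norm. Testing against $f_k$ and splitting $\NT(\calO^n,X^n,\alpha)f_k(X^n)-\NT(\calO^{n-1},X^{n-1},\alpha)f_k(X^{n-1})$ into a $\NT$-increment times $f_k(X^n)$ plus $\NT(\calO^{n-1},\cdot)$ times an $f_k$-increment, then using Cauchy--Schwarz in time, squaring, summing over $k$, and invoking the normalization \eqref{eq:summable} (which yields $\sum_k\|f_k\|_\infty^2\le 1$ and $\sum_k\|\nabla f_k\|_\infty^2\le 1$, hence $\metric(\delta_x-\delta_y)\le|x-y|$ and $\metric(\delta_x)\le 1$) together with the Lipschitz bound \eqref{eq:Lipschitz} for $\NT$, I expect
\[
\metric^2(\calO^{n+1}_t-\calO^n_t)\le C_T\int_0^t\Big(\rho^2(\calO^n_s-\calO^{n-1}_s,\,X^n_s-X^{n-1}_s)+|\NT(\calO^{n-1}_s,X^{n-1}_s,\alpha_s)|^2\,|X^n_s-X^{n-1}_s|^2\Big)\,\d s.
\]
Together with the routine BDG estimate for $X^{n+1}_t-X^n_t$ (using the Lipschitz property of $b,\sigma$ in the Euclidean/Frobenius norms and $\metric\le|\cdot|$), this gives, with $\theta^n(t):=\E[\sup_{s\le t\wedge T}(\metric^2(\calO^{n+1}_s-\calO^n_s)+|X^{n+1}_s-X^n_s|^2)]$, an inequality of the form $\theta^n(t)\le C_T\int_0^t\theta^{n-1}(s)\,\d s$, whence $\theta^n(T)\le\theta^0(T)\,(C_TT)^n/n!$ and $(\calO^n,X^n)$ is Cauchy.

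For the limit, $X^\alpha$ exists in $L^2(\Omega;\calC([0,T];\R^d))$, and — to bypass the non-completeness of $(\sM,\metric)$ — I recover $\calO^\alpha$ by the explicit formula $\calO^\alpha_t(B):=\mo(B)+\int_0^t\NT(\calO^\alpha_s,X^\alpha_s,\alpha_s)\mathds{1}_B(X^\alpha_s)\,\d s$ once $X^\alpha$ is known, then check that $\metric(\calO^n_t-\calO^\alpha_t)\to 0$ (the pairings against each $f_k$ converge, with uniformly summable tails in $k$) and that $(\calO^\alpha,X^\alpha)$ solves \eqref{eq:OSDEOcc}--\eqref{eq:OSDE}; uniqueness follows by applying the same Gr\"onwall estimate to the difference of two solutions. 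I expect the main obstacle to be the term $|\NT(\calO^{n-1}_s,X^{n-1}_s,\alpha_s)|^2|X^n_s-X^{n-1}_s|^2$ in the displayed bound: since \eqref{eq:growth} only gives linear growth of $\NT$ in $x$, it is not directly dominated by $\theta^{n-1}$. I would dispose of it exactly as in the classical locally-Lipschitz/linear-growth theory — truncating the coefficients outside a ball $\{|x|\le R\}$, on which $(\mo,x)\mapsto\NT(\mo,x,a)\delta_x$ is genuinely Lipschitz from $(\sD_T,\rho)$ into $(\sM,\metric)$, running the contraction for the truncated system, and then removing the truncation using the uniform $L^p$ bounds — or, more directly, absorbing the extra factor by H\"older's inequality and those same $L^p$ bounds.
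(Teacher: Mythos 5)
Your proposal is correct in outline, but it takes a genuinely different route from the paper. The paper's proof is essentially a citation: existence and uniqueness are taken from \cite[Theorem 4.2.11]{TissotThesis}, which is proved under growth and Lipschitz conditions measured by $\rho_{\mathrm{BL}}(\mo,x)=\sqrt{|\mo|_{\mathrm{BL}}+|x|^2}$, and the only thing checked is the domination $\varrho(\mo)\le|\mo|_{\mathrm{BL}}$ (via the normalization \eqref{eq:summable}), so that Lipschitz/growth with respect to $\rho$ is a stronger hypothesis and the cited theorem applies verbatim. You instead rebuild the Picard iteration directly in the cylindrical norm, which is self-contained and makes visible exactly where \eqref{eq:summable} is used ($\varrho(\delta_x)\le 1$ and $\varrho(\delta_x-\delta_y)\le|x-y|$, so differences of the measure component close in $\rho$); the price is the bookkeeping you already anticipate. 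Two of your fixes deserve a caveat. First, the factor $\NT(\calO^{n-1},X^{n-1},\alpha)$ multiplying $|X^n-X^{n-1}|$ is indeed the real obstruction: the map $(\mo,x)\mapsto\NT(\mo,x,a)\delta_x$ is only locally Lipschitz into $(\sM,\varrho)$, so the clean contraction requires your truncation/localization argument (truncate $\NT$, contract, remove the cutoff using uniform-in-$R$ moment bounds); the alternative ``absorb by H\"older'' does not close the recursion in a single quantity $\theta^{n-1}$ and should not be relied on as stated. Second, the ``explicit formula'' $\calO^\alpha_t(B)=\mo(B)+\int_0^t\NT(\calO^\alpha_s,X^\alpha_s,\alpha_s)\mathds{1}_B(X^\alpha_s)\,\d s$ is itself implicit in $\calO^\alpha$; the correct way to bypass the non-completeness of $(\sM_q,\varrho)$ is to note that $\NT(\calO^n_\cdot,X^n_\cdot,\alpha_\cdot)$ is Cauchy in $L^2(\d t\otimes\Q)$ with limit $\lambda^*$, define $\calO^\alpha_t:=\mo+\int_0^t\lambda^*_s\,\delta_{X^\alpha_s}\,\d s$ (manifestly in $\sM_q$), and then identify $\lambda^*_s=\NT(\calO^\alpha_s,X^\alpha_s,\alpha_s)$ by continuity. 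Your device of extending the coefficients off $\sD_T$ (they are only assumed on $\sD_T\times A$) and restricting to times before the exit from $\sD_T$ is fine and in fact addresses a point the paper leaves implicit. In short: the paper buys brevity by outsourcing the iteration to the thesis under a weaker metric; your argument buys self-containedness and transparency about the role of the separating family, at the cost of the localization details above.
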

\begin{proof}
This result is proved in \cite[Theorem 4.2.11]{TissotThesis}
under the growth and Lipschitz conditions  %in \cref{def:general} 
with  
$\rho_{\text{\tiny BL}}(\mo,x) := \sqrt{|\mo|_{\text{\tiny BL}} +|x|^2}$, where $|\cdot|_{\text{\tiny BL}}$ is  % and 
the \textit{bounded Lipschitz norm} 
$$|\mo|_{\text{\tiny BL}} := \sup\left\{\mo(f) : \max(\lVert f \rVert_{\infty}, [f]_{\text{Lip}})\le 1\right\}, \quad [f]_{\text{Lip}} := \sup_{x\ne x'}\frac{|f(x) - f(x')|}{|x-x'|}.$$ 
We now verify that $\rho_{\text{\tiny BL}}$ dominates $\rho$. %the metric induced by  $\rho$ is weaker than the one induced by $\rho_{\text{\tiny BL}}$.  
Indeed, define $\tilde{f}_k = f_k/\lVert f_k \rVert_{\sC^1}\in \sC^{1}_b(\R^d)$  with $(f_k)$ given in \cref{ssec:cylindrical}. Then $\max(\lVert \tilde{f}_k \rVert_{\infty}, [\tilde{f}_k]_{\text{Lip}}) \le \lVert \tilde{f}_k\lVert_{\sC^1} \le  1$, hence
$$\varrho(\mo)^2 = \sum\nolimits_{k} |\mo(f_k)\, |^2 \le \sum\nolimits_{k} \lVert f_k \rVert_{\sC^1}^2 |\mo(\tilde{f}_k)\, |^2\le |o|_{\textnormal{\tiny{BL}}}^2\sum\nolimits_{k} \lVert f_k \rVert_{\sC^1}^2 \le |o|_{\textnormal{\tiny{BL}}}^2.$$
 Then it is clear that $\rho^2(\mo,x) \le \rho_{\text{\tiny BL}}^2(\mo,x)$.
Thus, the conditions of 
\cite[Theorem 4.2.11]{TissotThesis} hold and the existence of
a strong solution follows.
\end{proof}

The following It\^o formula is a generalization of Theorem 3.3 in \cite{Tissot}. See also \cite[Proposition 9]{Bethencourt}. 
\begin{proposition}
    \label{prop:Ito} \textnormal{\textbf{(Itô Formula)}}
  Consider an occupied process  $(\cO,X)$  with dynamics 
    $$d\calO_t = \lambda_t \delta_{X_t}dt, \quad dX_t =b_tdt + \sigma_t dW_t,$$ where  $\lambda,b,\sigma$ are given adapted processes taking values in $\R_+,\R^d, \R^{d\times d}$, respectively,  such that $\lambda$ and $b$ are locally integrable and $\sigma$ is locally square integrable.  
    If $v\in \sC^{1,2}(\sD)$, then  for all $t\ge 0$,
       \begin{align}
       dv(\calO_t,X_t)  &=  \Big(\NT_t  \pmo v + b_t \cdot \nabla v+  \frac{1}{2}\textnormal{tr}(\sigma_t \sigma^{\! \top}_t \nabla^2 v) \Big)(\calO_t,X_t) dt + \nabla  v(\calO_t,X_t)\cdot \sigma_tdW_t.\label{eq:ito}
    \end{align} 
\end{proposition}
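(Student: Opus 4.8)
The plan is to prove the Itô formula \eqref{eq:ito} by reducing it, via the linear derivative, to a setting where the classical finite-dimensional Itô formula applies. The key observation is the relation \eqref{eq:OccvsLinearDerivative}, namely $\pmo v(\mo,x) = \dmo v(\mo,x)(x)$, which holds under the continuity and quadratic growth assumptions bundled into $v\in\sC^{1,2}(\sD)$. The term $\int_0^t \NT_s\,\pmo v(\calO_s,X_s)\,\d s$ should be recognized as the contribution coming from the absolutely continuous (in the measure variable) evolution $\d\calO_t = \NT_t\delta_{X_t}\,\d t$, while the remaining terms are exactly what the standard Itô formula in the finite-dimensional variable $x$ produces when the measure argument is frozen.

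First I would establish the formula for a dense/convenient subclass of test functions, the \emph{cylindrical functions} $v(\mo,x) = \phi(\mo(f_1),\dots,\mo(f_n),x)$ with $\phi\in\sC^{1,2}(\R^n\times\R^d)$ and $f_i\in\sC^1_b$. For such $v$, the process $t\mapsto(\mo_t(f_1),\dots,\mo_t(f_n),X_t)$ is a continuous semimartingale: each $Y^i_t := \calO_t(f_i) = \calO_0(f_i) + \int_0^t \NT_s f_i(X_s)\,\d s$ has bounded-variation dynamics $\d Y^i_t = \NT_t f_i(X_t)\,\d t$ (using $\d\calO_t = \NT_t\delta_{X_t}\,\d t$), and $X$ has the stated Itô dynamics. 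Applying the classical multidimensional Itô formula to $\phi(Y_t,X_t)$, the cross-variation terms between the $Y^i$ and $X$ vanish (since $Y^i$ has finite variation), and one reads off that $\sum_i \partial_{y_i}\phi\cdot\NT_t f_i(X_t) = \NT_t\,\dmo v(\calO_t,X_t)(X_t) = \NT_t\,\pmo v(\calO_t,X_t)$, while $\nabla_x\phi$ and $\nabla^2_x\phi$ give the drift-diffusion terms. This yields \eqref{eq:ito} for cylindrical $v$ directly.

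The main obstacle, and the step requiring the most care, is passing from cylindrical functions to a general $v\in\sC^{1,2}(\sD)$. The strategy is an approximation argument: along the (random) path $s\mapsto(\calO_s,X_s)$, which for $s\in[0,t]$ stays in a fixed locally compact region (the measures are supported in a ball of radius $\sup_{s\le t}|X_s|$ and have mass bounded by $\calO_0(\R^d) + \int_0^t\NT_s\,\d s$), one approximates $v$ and its derivatives $\pmo v$, $\nabla v$, $\nabla^2 v$ uniformly by those of cylindrical functions built from the separating family $(f_k)$ of \cref{ssec:cylindrical}. Here the cylindrical norm $\varrho$ is the right tool: it metrizes the weak topology on the relevant bounded set $\sM_T$, so continuity in the product topology upgrades to uniform continuity on the compact trajectory-closure, and the Itô identity \eqref{eq:ito}, being an identity between continuous functionals integrated against $\d s$ and $\d W_s$, is stable under such uniform approximation (the stochastic integral term converges in probability by the Itô isometry / dominated convergence, the Lebesgue integral terms by dominated convergence). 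An alternative, cleaner route is to invoke the published Itô formula of \cite[Theorem 3.3]{Tissot} for the standard time case $\NT\equiv 1$ and then handle general $\NT$ by a time-change / chain-rule argument on the clock $\Lambda^\alpha_t$; I would mention this as the quickest justification since the statement explicitly advertises itself as a generalization of that result, and the only genuinely new content is bookkeeping the factor $\NT_t$ in front of $\pmo v$, which is transparent from the cylindrical computation above.
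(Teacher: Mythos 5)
Your cylindrical-function computation is fine, but it does not prove the proposition: the entire difficulty sits in the passage from cylindrical $v$ to a general $v\in\sC^{1,2}(\sD)$, and that step is asserted rather than proved. You would need cylindrical functions $v_n$ with $v_n\to v$, $\pmo v_n\to\pmo v$, $\nabla v_n\to\nabla v$ and $\nabla^2 v_n\to\nabla^2 v$ uniformly on the compact closure of the trajectory $\{(\calO_s,X_s):s\le t\}$; no such density theorem is stated or proved in the paper, and it is not a standard fact. A Stone--Weierstrass-type argument in the $\varrho$-metric gives uniform approximation of $v$ itself, but says nothing about simultaneously approximating the derivatives, in particular the occupation derivative $\pmo v$. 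Stability of the It\^o identity under uniform convergence of all the integrands (your dominated-convergence/isometry remark) is the easy part; producing the approximating sequence is the hard part, and it is missing. A secondary issue: your reduction via $\pmo v(\mo,x)=\dmo v(\mo,x)(x)$ invokes \eqref{eq:OccvsLinearDerivative}, which in the paper requires the growth bound $|\pmo v(\mo,x)|\le C(1+|x|^2)$ in addition to continuity; the paper's $\sC^{1,2}(\sD)$ only asks that $\pmo v,\nabla v,\nabla^2 v$ exist and be continuous, so the linear derivative need not exist for a general $v\in\sC^{1,2}(\sD)$, and the formula must be (and in the paper is) proved without it. Your fallback of citing \cite[Theorem 3.3]{Tissot} plus a time change also does not close the argument: here $\lambda_t\ge 0$ with no lower bound, so $\Lambda$ need not be invertible, and a time change alters the martingale part as well, so the ``bookkeeping'' is not transparent; moreover the proposition is precisely the generalization of that theorem, so citing it for the general clock begs the question.

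For comparison, the paper avoids both the linear derivative and any cylindrical approximation. It discretizes along a partition $0=t_0<\cdots<t_N=t$, replaces $X$ by the piecewise-constant $X^N$ and $\calO$ by $\calO^N_s=\int_0^s\delta_{X^N_u}\,\d\Lambda_u$, proves $\sup_{s\le t}\varrho(\calO_s-\calO^N_s)\to 0$ a.s.\ (using $\sum_k\|f_k\|_{\sC^1}^2\le 1$ and the modulus of continuity of $X$), and then telescopes: on each subinterval the increment splits into a pure measure increment in the Dirac direction $\Delta^{\!\Lambda}_n\delta_{X_{t_n}}$, handled by the fundamental theorem of calculus for $h\mapsto v(\mo+h\delta_x,x)$ (which only needs existence and continuity of $\pmo v$), plus a pure space increment with the measure frozen, handled by the classical It\^o formula. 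Continuity of the derivatives and the a.s.\ uniform convergence then give \eqref{eq:ito} in the limit. If you want to salvage your route, you would have to either prove the cylindrical density statement with derivatives (substantial extra work) or abandon it in favor of a direct discretization of this kind.
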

\begin{proof}
    See \cref{sec:Itoproof}.
\end{proof}
Evidently, the above formula applies to controlled occupied processes %governed by the OSDE 
solving \eqref{eq:OSDEOcc}-\eqref{eq:OSDE} by setting $\varphi_t = \varphi(\calO_t^{\alpha},X_t^{\alpha},\alpha_t)$, $\varphi\in \{\lambda,b,\sigma\}$. 
% \begin{corollary}
%     \label{cor:Ito}
%     %\textnormal{\textbf{(Itô's formula)}} 
%     Let  $\varphi\in \sC^{1,2}(\sD)$, $\alpha \in \sA$, and $(\calO^{\alpha},X^{\alpha})$ be a  solution of the OSDE 
%     \eqref{eq:OSDEOcc}-\eqref{eq:OSDE}. 
%     Then % for all $t\ge 0$, 
%        \begin{align}
%        d\varphi(\calO_t^\alpha,X_t^\alpha)  &=  \Big(\NT  \pmo\varphi + b \cdot \nabla\varphi+  \frac{1}{2}\textnormal{tr}(\sigma \sigma^{\! \top} \nabla^2\varphi) \Big)(\calO_t^{\alpha},X_t^{\alpha},\alpha_t) dt + (\nabla \varphi \sigma)(\calO_t^{\alpha},X_t^{\alpha},\alpha_t)\cdot dW_t.\label{eq:ito}
%     \end{align} 
% \end{corollary}

\subsection{Value function}
\label{ssec:value}
 
Given $\ell :\sD_T \times A \to \R$, 
$g:\partial\sD_T\to \R$, and the shorthand notation $\E^{\Q}_{\mo,x}
[\cdot] = \E^{\Q}
[\ \cdot \ |  \ (\calO^{\alpha}_0,X^{\alpha}_0) = (\mo,x)]$,  consider the   control problem of minimizing
\begin{equation}
\label{eq:controProblem}
J(\mo,x,\alpha) := \E^{\Q}_{\mo,x}
[\int_{0}^{\tau^{\alpha}} \ell(\calO_{t}^{\alpha},X_{t}^{\alpha}, \alpha_t) dt  
+ g(\calO_{\tau^{\alpha}}^{\alpha},X_{\tau^{\alpha}}^{\alpha})],
\end{equation}
over all $\alpha \in \sA$,
with the exit time 
$\tau^{\alpha} = \inf\{t\ge 0 : (\calO_t^{\alpha},X_t^{\alpha})\notin 
\mathring{\sD_T}\} = \inf\{t\ge 0 : \Lambda_t^{\alpha} \ge T\}.$ 
The \emph{value function} is given by,
\be \label{eq:valueFct}
v(\mo,x):= \inf_{\alpha \in \sA} J(\mo,x,\alpha),\qquad (\mo,x) \in \sD_T.
\ee
In what follows, we make the 
following standing assumption.
\begin{assumption}
\label{asm:standing}
There exists $c_* \ge 1$ 
such that $\NT$, $b$, $\sigma$, $\ell$, and $g$ all
satisfy growth and Lipschitz conditions with $c_*$. Moreover,
\be
\label{eq:WeakEllipticity}
\NT(\mo,x,a) \ge 1/c_*,\qquad \forall \ (\mo,x,a) \in \sD_T \times A.
\ee
\end{assumption}
We remark that we require the nondegeneracy of $\lambda$, but not that of $\sigma$. In particular, 
condition \eqref{eq:WeakEllipticity} gives a lower bound on the total mass process, namely  
$$
\Lambda_t^{\alpha} = |\cO_0^\alpha|  +\int_0^t 
\NT(\calO_{s}^{\alpha},X_{s}^{\alpha}, \alpha_s) \, d s
\ge t/ c_*,
\qquad t \ge0.
$$
Hence the exit time in \eqref{eq:controProblem} satisfies $\tau^{\alpha} \le c_{*} T$, ensuring that 
the objective function $J$  is finite. If $\lambda \equiv 1 $ (standard time occupation flow), then \eqref{eq:WeakEllipticity} holds with $c_*=1$ and we have   $\tau^{\alpha} \equiv T$. If $\NT=\lVert \sigma \rVert_{F}^2$ (occupation flow), then \eqref{eq:WeakEllipticity} can be seen as a weak ellipticity condition on $\sigma$. 

\begin{proposition} \label{prop:Lipschitz}
      Under Assumption \ref{asm:standing}, the value function is locally $1/2-$Hölder  
   continuous with respect to $\rho$, that is, for all $\delta >0$ 
   there exists  $\hat{c} >0$ that depends on $c_*$, $T$, and $\delta$ such that 
   \begin{equation}
      \rho(\mo-\mo',x-x') \le \delta \; \Longrightarrow \; |v(\mo,x) - v(\mo',x')| \le \hat{c} \ \rho(\mo-\mo',x-x')^{1/2}. %, \quad (\mo,x),(\mo',x') \in \sD_T.
   \end{equation}
   
\end{proposition}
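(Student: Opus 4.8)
The plan is to establish the Hölder estimate by a standard stability-of-SDEs argument, adapted to the occupied setting, together with the optimal-stopping structure of the exit time. Fix two initial data $(\mo,x),(\mo',x')\in\sD_T$ with $\rho(\mo-\mo',x-x')\le\delta$, and an arbitrary admissible control $\alpha\in\sA$. Since the value function is an infimum over $\alpha$, it suffices to bound $|J(\mo,x,\alpha)-J(\mo',x',\alpha)|$ uniformly in $\alpha$ (first controlling the difference on the event where the two exit times agree up to the running cost, then accounting for the mismatch of exit times). The two OSDEs driven by the \emph{same} $\alpha$ and the \emph{same} Brownian motion $W$ are coupled; write $\Delta\calO_t:=\calO_t^{\alpha}-\calO_t'^{\,\alpha}$, $\Delta X_t:=X_t^{\alpha}-X_t'^{\,\alpha}$. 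Using the Lipschitz hypotheses from \cref{asm:standing} on $\NT,b,\sigma$ (in the norm $\rho$), Gronwall's inequality, and the Burkholder–Davis–Gundy inequality, I would obtain a moment bound of the form
\begin{equation*}
\E^{\Q}\!\Big[\sup_{t\le c_*T}\rho\big(\Delta\calO_t,\Delta X_t\big)^2\Big]\le C\,\rho(\mo-\mo',x-x')^2,
\end{equation*}
with $C=C(c_*,T)$. The one subtlety here is that $\rho$ involves $\metric(\Delta\calO_t)=\big(\sum_k|\Delta\calO_t(f_k)|^2\big)^{1/2}$; since $d\calO_t^{\alpha}=\NT_t\,\delta_{X_t^{\alpha}}\,dt$, one has $\Delta\calO_t(f_k)=\Delta\calO_0(f_k)+\int_0^t(\NT_s f_k(X_s^{\alpha})-\NT_s' f_k(X_s'^{\,\alpha}))\,ds$, and using $\|f_k\|_{\sC^1}$-summability \eqref{eq:summable} together with the Lipschitz bound on $\NT$ one controls $\metric(\Delta\calO_t)$ by $\metric(\Delta\calO_0)$ plus a time integral of $\rho(\Delta\calO_s,\Delta X_s)$ — exactly what feeds Gronwall. (Care is needed that the exit times $\tau^{\alpha},\tau'^{\,\alpha}$ are $\le c_*T$ by \eqref{eq:WeakEllipticity}, so all integrals run over a bounded deterministic horizon.)

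Next I would handle the cost functionals. The running-cost difference $\big|\int_0^{\tau}\ell(\calO_t^{\alpha},X_t^{\alpha},\alpha_t)\,dt-\int_0^{\tau'}\ell(\calO_t'^{\,\alpha},X_t'^{\,\alpha},\alpha_t)\,dt\big|$ splits into (i) a term $\int_0^{\tau\wedge\tau'}|\ell(\calO_t^{\alpha},X_t^{\alpha},\alpha_t)-\ell(\calO_t'^{\,\alpha},X_t'^{\,\alpha},\alpha_t)|\,dt$, bounded via the $\ell$-Lipschitz hypothesis and the moment estimate above by $C\,\rho(\mo-\mo',x-x')$; and (ii) a term over the interval $[\tau\wedge\tau',\tau\vee\tau']$, bounded by $c_*(1+\rho)$-growth of $\ell$ times $\E^{\Q}|\tau-\tau'|$. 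The terminal-cost term is similar but more delicate because $g$ is evaluated at the random exit points. I would estimate $\E^{\Q}|g(\calO_{\tau}^{\alpha},X_{\tau}^{\alpha})-g(\calO_{\tau'}'^{\,\alpha},X_{\tau'}'^{\,\alpha})|$ by first comparing both to $g$ evaluated at a common time (say $\tau\wedge\tau'$), using the Lipschitz property of $g$ and the continuity/moment estimates of the paths, which generates a term of order $\E^{\Q}[\rho(\cdots)]+\E^{\Q}[$ path oscillation over $|\tau-\tau'|]$; the latter oscillation, by BDG and the bounded horizon, is of order $\big(\E^{\Q}|\tau-\tau'|\big)^{1/2}$.

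Thus everything reduces to bounding $\E^{\Q}|\tau^{\alpha}-\tau'^{\,\alpha}|$ — this is the main obstacle, and the source of the $1/2$-exponent loss. Recalling $\tau^{\alpha}=\inf\{t:\Lambda_t^{\alpha}\ge T\}$ with $\Lambda_t^{\alpha}=|\mo|+\int_0^t\NT(\calO_s^{\alpha},X_s^{\alpha},\alpha_s)\,ds$ strictly increasing at rate $\ge 1/c_*$ by \eqref{eq:WeakEllipticity}, the clocks $t\mapsto\Lambda_t^{\alpha}$ and $t\mapsto\Lambda_t'^{\,\alpha}$ are bi-Lipschitz reparametrizations of time, so $|\tau^{\alpha}-\tau'^{\,\alpha}|\le c_*\,|\Lambda_{\tau'^{\,\alpha}}^{\alpha}-\Lambda_{\tau'^{\,\alpha}}'^{\,\alpha}|$ (using $\Lambda_{\tau'^{\,\alpha}}'^{\,\alpha}=T=\Lambda_{\tau^{\alpha}}^{\alpha}$ and monotonicity). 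The right side is $\le c_*\big(|\,|\mo|-|\mo'|\,|+\int_0^{c_*T}|\NT(\calO_s^{\alpha},X_s^{\alpha},\alpha_s)-\NT(\calO_s'^{\,\alpha},X_s'^{\,\alpha},\alpha_s)|\,ds\big)$, which by the $\NT$-Lipschitz bound is $\le C\big(\rho(\mo-\mo',x-x')+\int_0^{c_*T}\rho(\Delta\calO_s,\Delta X_s)\,ds\big)$; taking expectations and invoking the moment estimate yields $\E^{\Q}|\tau^{\alpha}-\tau'^{\,\alpha}|\le C\,\rho(\mo-\mo',x-x')$. Feeding this into the cost estimates, the worst term is the $\big(\E^{\Q}|\tau-\tau'|\big)^{1/2}$ contribution from the terminal cost, giving $|J(\mo,x,\alpha)-J(\mo',x',\alpha)|\le\hat c\,\rho(\mo-\mo',x-x')^{1/2}$ with $\hat c$ depending only on $c_*,T$ (and, through the moment bounds restricted to $\rho\le\delta$, on $\delta$ for the lower-order $1+\rho$ growth terms). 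Taking the infimum over $\alpha$ and then swapping the roles of the two data points completes the proof.
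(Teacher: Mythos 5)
Your proposal follows essentially the same route as the paper's own proof: couple the two occupied processes with the same control and Brownian motion, obtain the Gronwall/BDG moment estimate $\E^{\Q}[\sup_t\rho(\Delta\calO_t,\Delta X_t)^2]\le C\rho(\mo-\mo',x-x')^2$, split the running and terminal costs at the common time, compare the exit times through the nondegeneracy \eqref{eq:WeakEllipticity} of $\NT$ (giving $\E^{\Q}|\tau-\tau'|\le C\rho$), and accept the $1/2$-exponent loss from the BDG bound on the stochastic integral over $[\tau\wedge\tau',\tau\vee\tau']$. The only detail left implicit is that bounding $\bigl||\mo|-|\mo'|\bigr|$ by a multiple of $\varrho(\mo-\mo')$ in the exit-time comparison relies on the separating family containing the constant function $f_0$, which is precisely how the paper's Lemma on $\|\tau-\tau'\|_{L^2(\Q)}$ handles that term.
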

\begin{proof}
    See \cref{sec:Lipschitz}.
\end{proof}
 
\begin{remark}
    \label{rem-Holder}
    In standard stochastic control theory, typically the value function is $1/2-$Hölder continuous in $t$, but is Lipschitz continuous in $x$. Here, however,  $v$ is only $1/2-$Hölder continuous in $x$  in general. The reason is that the exit time $\tau^\alpha$ may depend on $x$. For example, let $d=1$, $b\equiv 0$, $\sigma \equiv 1$, $\ell \equiv 0$, and $\lambda = \lambda(x)\ge 1$,  $g=g(x)$ are Lipschitz continuous in $x$ with Lipschitz constant $C=1$. Then the problem does not involve the control $\alpha$. Given $(\mo, x) \in \sD_T$, the exit time $\tau_{\mo,x} = \inf\{t\ge 0: \int_0^t \lambda(x+W_s) ds \ge  T-|\mo|\}$, and $v(\mo, x) = \E^{\Q}
[g(x+W_{\tau_{\mo,x}})]$. Thus
\begin{align*}
    |v(\mo,x)-v(\mo,x')| 
\le |x-x'| +  \E^{\Q}
\big[|W_{\tau_{\mo,x}}-W_{\tau_{\mo,x'}}|\big]
\end{align*}
From the Burkholder-Davis-Gundy inequality \cite[Chapter IV]{RevuzYor}, we can see that the last term above is of order $\E^{\Q}
\big[|\tau_{\mo,x}-\tau_{\mo,x'}|^{1/2}\big] \sim |x-x'|^{1/2}$. 
\end{remark}
   
 \subsection{Viscosity Solutions and Uniqueness}
 \label{sec:main}
For $(\mo,x) \in \sD_T$, $\zeta:=(\theta,\Delta,\Gamma) \in \R\times \R^d \times \SSS^d$,
the Hamiltonian is given by, 
\begin{equation}\label{eq:Hamiltonian}
 \sH(\mo,x,\zeta) = -\inf_{a \in A} \Big(\NT(\mo,x,a) \ \theta 
     +   b(\mo,x,a) \cdot \Delta + \frac{1}{2}\textnormal{tr}( (\sigma \sigma^{\! \top})(\mo,x,a) \Gamma)  
     + \ell (\mo,x,a) \Big).
\end{equation} 
 Then, by \cite{FS}  the \textit{dynamic programming equation}  
 associated to \eqref{eq:controProblem}  is given by 
\begin{subnumcases}{}
\sH(\mo,x,\pmo u,\nabla u,\nabla^2u) = 0 & \text{ on }   
$\mathring{\sD_T}$, \label{eq:HJB12}\\
      u = g, & \text{ on }  $\partial  \sD_{T} $, 
      \label{eq:HJB22} 
  \end{subnumcases}
  where $\pmo u = \pmo u(\mo,x)$ is the occupation derivative. As announced in the Introduction, equation  \eqref{eq:HJB12} is %becomes 
  linear in $\pmo u$ when $\lambda \equiv 1 $ (standard time) and admits the more familiar expression, %common expression in parabolic PDEs, %simpler form
  \begin{align*}
       -\pmo u + \sH_x(\mo,x,\nabla u,\nabla^2u) = 0, \quad  \sH_x(\mo,x,\Delta,\Gamma) = \sH(\mo,x,0,\Delta,\Gamma).
  \end{align*}
  In general, however, 
  the control may influence the rate $\lambda$ % unlike common parabolic PDEs,  
  leading to nonlinearities in the occupation derivative  as well. % as seen in \eqref{eq:Hamiltonian}. % in the occupation flow

%Next, we introduce appropriate sets of test functions to define  %and viscosity solutions. 
Next, we introduce suitable families of test functions towards a viscosity theory.
For $T>0$, $u \in \sU(\sD_T)$, $w \in \sL(\sD_T)$,
and $(\mo,x)\in \sD_T$, following \cite{userGuide, FS} we set %consider the test functions 
     \begin{align*}
      \frakS_{+}u(\mo,x) = \frakS_{+}^Tu(\mo,x)  &:= \{\phi \in \sC^{1,2}(\sD_T)\, : \,
       0=(u-\phi)(\mo,x)  = \max_{\sD_T} (u-\phi) \},
            \\
        \frakS_{-}w(\mo,x) =\frakS_{-}^Tw(\mo,x)  &:= \{\phi \in \sC^{1,2}(\sD_T)\, : \,
         0=(w-\phi)(\mo,x)  = \min_{\sD_T} (w-\phi) \}.
        \end{align*}
The definition of viscosity solutions is classical \cite{userGuide, FS}.
        
\begin{definition}\label{def:visc}
        \textnormal{\textbf{(Viscosity solutions)}} We say that
\begin{itemize}        
\item {\rm{$w\in \sL(\sD_T)$ is a}}
           viscosity supersolution {\rm{of $\sH \ge 0$, if}} 
\be
\label{eq:viscSub}
         \sH(\mo,x,\pmo \phi,\nabla \phi,\nabla^2\phi)  \ge 0,  
         \qquad \forall \phi \in \frakS_{-}w(\mo,x), \ \ (\mo,x) \in
         \mathring{\sD_T}. 
\ee
\item  {\rm{$u\in \sU(\sD_T)$ is a}} viscosity subsolution 
    {\rm{of $\sH \le 0$, if}}
\be
\label{eq:viscSuper}
     \sH(\mo,x,\pmo \phi,\nabla \phi,\nabla^2\phi)  \le 0,  
     \qquad \forall \phi \in \frakS_{+}u(\mo,x), \ \ (\mo,x) \in 
     \mathring{\sD_T}.    
\ee
\item {\rm{$v\in \sC(\sD_T)$ a}} viscosity solution $\sH=0$,
     {\rm{if it is both a viscosity supersolution of $\sH \ge 0$ and a subsolution of $\sH\le 0$.}}
\end{itemize}
\end{definition}

\begin{remark}
        \label{rem:semi}
        {\rm{Following the classical theory,
         in the definition of a supersolution we can take
        any function (not even measurable) and then consider its 
        upper semicontinuous envelope.  Similarly for 
        subsolutions, we could consider the lower semicontinuous envelope.}} 
 \end{remark}

The main result of this paper is the following comparison principle.
 \begin{theorem}
\label{thm:compare}  \textnormal{\textbf{(Comparison)}} Suppose that Assumption \ref{asm:standing} holds,
$\uv \in \sU_b(\sD_T)$ is a viscosity subsolution of $\sH \le 0$,
$\ov\in \sL_b(\sD_T)$  is a viscosity  supersolution of $\sH \ge  0$,
and $\uv\le \ov $  on $\partial \sD_T$. Then,  $u \le w$ on $\sD_T$. 
\end{theorem}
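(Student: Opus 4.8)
The plan is a doubling-of-variables argument in the spirit of \cite{userGuide}, run with the parabolic norm $\rho$ and the generalized Crandall--Ishii--Lions Lemma of \cref{sec:CI}, after two preliminary reductions. \emph{First, reduce to a strict subsolution.} The map $(\mo,x)\mapsto|\mo|-T$ belongs to $\sC^{1,2}(\sD_T)$ with occupation derivative $\equiv1$ and vanishing space derivatives, so for $\kappa>0$ the function $u_\kappa:=u-\kappa(T-|\mo|)$ satisfies: if $\phi\in\frakS_{+}u_\kappa(\mo_0,x_0)$ then $\phi+\kappa(T-|\mo|)\in\frakS_{+}u(\mo_0,x_0)$, whence $\sH(\mo_0,x_0,\pmo\phi-\kappa,\nabla\phi,\nabla^2\phi)\le0$; since by \eqref{eq:Hamiltonian} and \eqref{eq:WeakEllipticity} one has $\sH(\mo,x,\theta-\kappa,\Delta,\Gamma)\ge\sH(\mo,x,\theta,\Delta,\Gamma)+\kappa\inf_a\NT(\mo,x,a)\ge\sH(\mo,x,\theta,\Delta,\Gamma)+\kappa/c_*$, we get $\sH(\mo_0,x_0,\pmo\phi,\nabla\phi,\nabla^2\phi)\le-\kappa/c_*$. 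Thus $u_\kappa$ is a subsolution of $\sH\le-\kappa/c_*$, $u_\kappa\le u$ with equality on $\partial\sD_T$, and $u_\kappa\uparrow u$ as $\kappa\downarrow0$, so it suffices to prove $u_\kappa\le w$ for each fixed $\kappa$. \emph{Second, localize.} As $\sD_T$ is not compact, bring in the coercive supersolution $W$ of \cref{ssec:coercise} --- a function of $\sC^{1,2}(\sD_T)$, bounded below, with compact sublevel sets on $\sD_T$ and $\NT^a\pmo W+b^a\!\cdot\!\nabla W+\tfrac12\textnormal{tr}(\sigma^a(\sigma^a)^{\!\top}\nabla^2W)\le0$ for all $a$ (concretely $W(\mo,x)=e^{\mu(T-|\mo|)}(q(x)+\mo(q))$ for $\mu$ large, using \eqref{eq:growth}--\eqref{eq:WeakEllipticity}). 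Because $\sH$ is an infimum of forms affine in $(\theta,\Delta,\Gamma)$ and $W$ satisfies the above, $w+\beta W$ is again a supersolution of $\sH\ge0$ and $u_\kappa-\beta W$ remains a subsolution of $\sH\le-\kappa/c_*$ for every $\beta>0$, with $u_\kappa-\beta W\le w+\beta W$ on $\partial\sD_T$; letting $\beta\downarrow0$ afterward, it suffices to show $u_\kappa-\beta W\le w+\beta W$ on $\sD_T$.

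\emph{Doubling.} Assume, for contradiction, that $M:=\sup_{\sD_T}(u_\kappa-w-2\beta W)>0$. For $\vep>0$ maximize over $\sD_T\times\sD_T$
$$\Psi_\vep(\mo,x,\mo',x'):=(u_\kappa-\beta W)(\mo,x)-(w+\beta W)(\mo',x')-\tfrac1{2\vep}\,\rho^2(\mo-\mo',x-x'),$$
with $\rho^2(\mo-\mo',x-x')=\metric^2(\mo-\mo')+|x-x'|^2$. Upper/lower semicontinuity and the coercivity of $W$ on both slots give a maximizer $(\mo_\vep,x_\vep,\mo'_\vep,x'_\vep)$ in a fixed compact set; the standard lemma then yields $\tfrac1\vep\rho^2(\mo_\vep-\mo'_\vep,x_\vep-x'_\vep)\to0$ and, along a subsequence, $(\mo_\vep,x_\vep),(\mo'_\vep,x'_\vep)\to(\hat\mo,\hat x)$ with $(u_\kappa-w-2\beta W)(\hat\mo,\hat x)=M$. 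If $|\hat\mo|=T$ then $(u_\kappa-w-2\beta W)(\hat\mo,\hat x)=(u-w)(\hat\mo,\hat x)-2\beta W(\hat\mo,\hat x)\le0$, contradicting $M>0$, so $|\hat\mo|<T$. Since $f_0$ is a scalar multiple of $\mathds{1}$, the $f_0$-coordinate of $\metric$ is proportional to $|\mo|-|\mo'|$, so $\metric(\mo_\vep-\mo'_\vep)\to0$ forces $|\mo_\vep|-|\mo'_\vep|\to0$; together with $|\mo_\vep|=\mo_\vep(\mathds{1})\to\hat\mo(\mathds{1})=|\hat\mo|$ (weak continuity), both $|\mo_\vep|,|\mo'_\vep|\to|\hat\mo|<T$, so for small $\vep$ the maximizers lie in $\mathring{\sD_T}$.

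\emph{Crandall--Ishii--Lions and conclusion.} Apply the lemma of \cref{sec:CI} at the maximum of $\Psi_\vep$. Writing $p_\vep:=\tfrac1\vep(x_\vep-x'_\vep)$ and the occupation derivatives of the penalty $\theta_\vep:=\tfrac1\vep\sum_k(\mo_\vep-\mo'_\vep)(f_k)f_k(x_\vep)$, $\theta'_\vep:=\tfrac1\vep\sum_k(\mo_\vep-\mo'_\vep)(f_k)f_k(x'_\vep)$, it furnishes $X_\vep,Y_\vep\in\SSS^d$ with
$$\begin{pmatrix}X_\vep&0\\0&-Y_\vep\end{pmatrix}\le\frac3\vep\begin{pmatrix}I&-I\\-I&I\end{pmatrix},$$
hence $X_\vep\le Y_\vep$ and $\textnormal{tr}(\sigma_1\sigma_1^{\!\top}X_\vep-\sigma_2\sigma_2^{\!\top}Y_\vep)\le\tfrac3\vep\|\sigma_1-\sigma_2\|_F^2$ for all $\sigma_1,\sigma_2\in\R^{d\times d}$, such that the sub/supersolution inequalities read $\sH(\mo_\vep,x_\vep,\theta_\vep,p_\vep,X_\vep)\le-\kappa/c_*$ and $\sH(\mo'_\vep,x'_\vep,\theta'_\vep,p_\vep,Y_\vep)\ge0$. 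Subtracting and using \eqref{eq:Hamiltonian} with the common $p_\vep$, for each $a\in A$ the gap is controlled by $(\NT^a\theta_\vep-(\NT')^a\theta'_\vep)+(b^a-(b')^a)\!\cdot\!p_\vep+\tfrac12\textnormal{tr}(\sigma^a(\sigma^a)^{\!\top}X_\vep-(\sigma')^a((\sigma')^a)^{\!\top}Y_\vep)+(\ell^a-(\ell')^a)$, where primes mean evaluation at $(\mo'_\vep,x'_\vep)$. The normalization $\sum_k\|f_k\|_{\sC^1}^2\le1$ gives $|\theta_\vep-\theta'_\vep|\le\tfrac1\vep\metric(\mo_\vep-\mo'_\vep)|x_\vep-x'_\vep|$ and $|\theta'_\vep|\le\tfrac1\vep\metric(\mo_\vep-\mo'_\vep)$; with $|p_\vep|\le\tfrac1\vep|x_\vep-x'_\vep|$, the trace bound above, the Lipschitz bounds \eqref{eq:Lipschitz} for $\NT,b,\sigma,\ell$ in $\rho(\mo_\vep-\mo'_\vep,x_\vep-x'_\vep)$, their boundedness on the compact set carrying the maximizers (by \eqref{eq:growth}), and $\tfrac1\vep\rho^2(\mo_\vep-\mo'_\vep,x_\vep-x'_\vep)\to0$, every term is $o(1)$ uniformly in $a$. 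Hence $\kappa/c_*\le o(1)$, a contradiction. Therefore $u_\kappa-\beta W\le w+\beta W$; letting $\beta\downarrow0$ yields $u_\kappa\le w$, and $\kappa\downarrow0$ yields $u\le w$.

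\emph{Main obstacle.} The heart of the argument is the appeal to Crandall--Ishii--Lions in this infinite-dimensional occupied setting: one must produce the matrices $X_\vep\le Y_\vep$ with the block bound even though the $\mo$-variable lies in a non-locally-compact measure space and the time-like derivative $\pmo$ is only the \emph{local} occupation derivative; this is exactly what the cylindrical functions and the cylindrical norm of \cref{ssec:cylindrical} (and the finite-dimensional reduction of \cref{sec:CI}) are designed to overcome, the normalization $\sum_k\|f_k\|_{\sC^1}^2\le1$ being precisely what forces $|\theta_\vep-\theta'_\vep|\to0$. The remaining difficulty --- non-compactness of $\sD_T$ and keeping the maximizers inside $\mathring{\sD_T}$ --- is absorbed by the coercive approximation of \cref{ssec:coercise} and the nondegeneracy \eqref{eq:WeakEllipticity}.
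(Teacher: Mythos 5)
Your proposal is correct in substance and leans on the same core tool as the paper --- the occupied Crandall--Ishii--Lions Lemma of \cref{sec:CI} applied to a doubled functional penalized by $\tfrac1{2\vep}\rho^2$ --- but the surrounding architecture is genuinely different. The paper keeps the coercivity corrections $\gamma_1\,\mo(q)+\gamma_2\,q(x)$ and the strictness term $\beta(|\mo|-T)$ as \emph{perturbations of the equation}: it introduces the shifted Hamiltonians $\sH^\gamma_\beta$, proves the quantitative CIL property of \cref{lem:CI-verify} (with the modulus $\frakm$ and the error terms $\gamma_1Q^2+\gamma_2Q$), and then removes the parameters through the iterated limits $\vep\downarrow0$, $\gamma_1\downarrow0$, $\gamma_2\downarrow0$. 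You instead build a single smooth, coercive Lyapunov-type function $W$ that is an exact supersolution of all the controlled generators, $\NT^a\pmo W+b^a\!\cdot\!\nabla W+\tfrac12\textnormal{tr}(\sigma^a(\sigma^a)^{\!\top}\nabla^2W)\le0$, so that $u_\kappa-\beta W$ and $w+\beta W$ are honest sub/supersolutions of the \emph{unperturbed} $\sH$ (strict by $\kappa/c_*$ on the subsolution side via \eqref{eq:WeakEllipticity}); all error terms in the final inequality are then controlled by $\tfrac1\vep\rho^2\to0$ alone, and a single limit $\vep\downarrow0$ at fixed $\beta,\kappa$ already yields the contradiction, after which $\beta,\kappa\downarrow0$ are trivial pointwise limits. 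This buys a cleaner limit structure and dispenses with Definition \ref{def:hamiltonian} and \cref{lem:CI-verify} entirely, at the price of one extra verification that you state but do not carry out and misattribute: \cref{ssec:coercise} only provides the coercive function $\vt$, not a ``coercive supersolution''; your candidate $W(\mo,x)=e^{\mu(T-|\mo|)}\big(q(x)+\mo(q)\big)$ does work (using \eqref{eq:vtder}, $\pmo W=e^{\mu(T-|\mo|)}\big[q(x)-\mu(q(x)+\mo(q))\big]$, the growth bounds \eqref{eq:growth}, $\rho(\mo,x)\le(T+1)q(x)$ on $\sD_T$, and $\NT\ge1/c_*$, one gets $L_aW\le e^{\mu(T-|\mo|)}q(x)\big[\tfrac{1-\mu}{c_*}+C(c_*,d,T)\big]\le0$ for $\mu$ large), and you should also record that $W\in\sC^{1,2}(\sD_T)$ with compact sublevel sets so that it may be used inside test functions and so that \cref{lem:CI} applies to $u_\kappa-\beta W\in\sU_e$ and $w+\beta W\in\sL_e$ --- the smoothness status of $W$ is the same as that of the paper's own $\vt$, so this is a required computation rather than a gap. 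With that verification written out, your argument is a valid, and in the limit-taking somewhat simpler, alternative proof of \cref{thm:compare}.
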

We shall devote the next three sections to the proof of this theorem. We now provide the characterization of the value function $v$. 

\begin{theorem}
\label{thm:main}
Suppose that Assumption \ref{asm:standing} holds. Then, the value function $v$ is the unique viscosity solution of the dynamic programming equation  \eqref{eq:HJB12}--\eqref{eq:HJB22}.
\end{theorem}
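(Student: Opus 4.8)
The plan is to prove the statement in two parts: first that $v$ is indeed a viscosity solution of \eqref{eq:HJB12}--\eqref{eq:HJB22}, and second that it is the unique such solution. Uniqueness is immediate: if $\tilde v$ is another viscosity solution, then $\tilde v$ is both a subsolution and a supersolution, and $\tilde v = g = v$ on $\partial \sD_T$; applying \cref{thm:compare} twice (once with $(u,w) = (v,\tilde v)$ and once with $(u,w)=(\tilde v, v)$) gives $v \le \tilde v$ and $\tilde v \le v$ on $\sD_T$, hence $v = \tilde v$. So the real content is the verification that $v$ solves the equation, together with the boundary condition $v = g$ on $\partial \sD_T$. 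The latter holds by definition, since $\tau^\alpha = 0$ whenever $(\mo,x)\in\partial\sD_T$ (as $\Lambda_0^\alpha = |\mo| = T$), so $J(\mo,x,\alpha) = g(\mo,x)$ for all $\alpha$ and thus $v(\mo,x) = g(\mo,x)$ there. Continuity of $v$ up to $\partial \sD_T$ is supplied by \cref{prop:Lipschitz}.

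The heart of the argument is the dynamic programming principle (DPP): for every $(\mo,x)\in\mathring{\sD_T}$ and every stopping time $\theta \le \tau^\alpha$,
\begin{equation*}
v(\mo,x) = \inf_{\alpha\in\sA}\E^{\Q}_{\mo,x}\Big[\int_0^{\theta}\ell(\calO_t^\alpha,X_t^\alpha,\alpha_t)\,dt + v(\calO_\theta^\alpha,X_\theta^\alpha)\Big].
\end{equation*}
I would either invoke this as a standard fact (it follows from general measurable-selection arguments, using the strong-solution well-posedness of \cref{lem:exists}, the Markovian structure of the occupied process $(\calO^\alpha,X^\alpha)$ on the state space $\sD$, and the continuity of $v$), or cite the standard references \cite{FS}. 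Given the DPP, the viscosity property is obtained by the classical test-function argument. For the \emph{supersolution} inequality: fix $\phi\in\frakS_-v(\mo,x)$, fix an arbitrary constant control $\alpha\equiv a\in A$, let $\theta = \theta_h$ be the exit time from a small neighborhood intersected with $[0,h]$, use $v \ge \phi$ with equality at $(\mo,x)$, apply the Itô formula \cref{prop:Ito} to $\phi(\calO_t^\alpha,X_t^\alpha)$, take expectations (the stochastic integral is a martingale after localization), divide by $h$, and let $h\downarrow 0$; this yields $\NT(\mo,x,a)\,\pmo\phi + b(\mo,x,a)\cdot\nabla\phi + \tfrac12\mathrm{tr}((\sigma\sigma^\top)(\mo,x,a)\nabla^2\phi) + \ell(\mo,x,a) \le 0$, and taking the supremum over $a$ gives $\sH(\mo,x,\pmo\phi,\nabla\phi,\nabla^2\phi)\ge 0$. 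For the \emph{subsolution} inequality: fix $\phi\in\frakS_+v(\mo,x)$; now use the DPP to extract, for each $h$, an $\varepsilon h$-optimal control $\alpha^h$, run the same Itô computation with $v\le\phi$ and equality at $(\mo,x)$, and conclude $\inf_{a}\{\cdots\}\le 0$ after letting $h\downarrow 0$ and then $\varepsilon\downarrow 0$; the continuity and boundedness of the coefficients guaranteed by \cref{asm:standing} let the limiting infimum pass correctly. Combining, $v$ is a viscosity solution.

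The main obstacle is the DPP itself, and in particular the care needed because the time horizon is replaced by the random clock $\Lambda^\alpha$ and the exit time $\tau^\alpha$ depends on the control: one must check that $(\calO^\alpha,X^\alpha)$ genuinely enjoys the strong Markov property on $\sD$ (which it does, being a strong solution of the autonomous system \eqref{eq:OSDEOcc}--\eqref{eq:OSDE}) and that the measurable-selection step producing near-optimal controls goes through on the infinite-dimensional state space $\sD = \sM_q\times\R^d$ — here the separability of $\sM_q$ under the weak topology metrized on $\sM_T$ by $\varrho$ (noted in \cref{ssec:cylindrical}), together with the local Hölder continuity of $v$ from \cref{prop:Lipschitz}, is what makes the standard arguments applicable. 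A secondary technical point is the localization in the Itô step: since $\sigma$ need not be bounded, one localizes by the exit time from a bounded $\rho$-ball before taking expectations, then removes the localization using the linear growth bounds and the a priori moment estimates on $(\calO^\alpha, X^\alpha)$ implied by \cref{asm:standing}. None of these steps is conceptually new; they are the occupied-process analogue of the classical verification that a value function is a viscosity solution, and I would present them compactly, referring to \cite{FS} for the parts that are routine.
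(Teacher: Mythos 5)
Your overall route coincides with the paper's: the paper's own proof of this theorem is two sentences long, asserting that the viscosity property of $v$ follows from the regularity of \cref{prop:Lipschitz} together with the classical techniques of \cite{FS}, and that uniqueness is a direct consequence of \cref{thm:compare}. Your uniqueness argument (two applications of the comparison principle), the boundary identification $v=g$ on $\partial\sD_T$ via $\tau^{\alpha}=0$, and the general DPP-plus-It\^o plan are exactly what is intended there.

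There is, however, a concrete error in your sketch of the viscosity verification: you have paired the two halves of the DPP with the wrong test-function classes, and as written both derivations fail. With the Hamiltonian \eqref{eq:Hamiltonian}, the \emph{subsolution} inequality $\sH\le 0$ at $\phi\in\frakS_{+}v(\mo,x)$ is the one obtained from an arbitrary constant control $a$: the easy half of the DPP gives $v(\mo,x)\le \E^{\Q}_{\mo,x}[\int_0^\theta \ell\,dt+v(\calO_\theta,X_\theta)]$, and since $v\le\phi$ you may replace $v(\calO_\theta,X_\theta)$ by $\phi(\calO_\theta,X_\theta)$ on the right; It\^o and $h\downarrow 0$ then yield $\NT\,\pmo\phi+b\cdot\nabla\phi+\tfrac12\mathrm{tr}(\sigma\sigma^{\!\top}\nabla^2\phi)+\ell\ \ge\ 0$ for \emph{every} $a$, i.e.\ $\inf_a\{\cdots\}\ge 0$, i.e.\ $\sH\le 0$. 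The \emph{supersolution} inequality $\sH\ge 0$ at $\phi\in\frakS_{-}v(\mo,x)$ is the one that requires the $\varepsilon h$-optimal controls and the hard half of the DPP, because there $v\ge\phi$ lets you replace $v(\calO_\theta,X_\theta)$ by $\phi(\calO_\theta,X_\theta)$ in $v(\mo,x)+\varepsilon h\ge\E^{\Q}_{\mo,x}[\cdots]$, giving $\inf_a\{\cdots\}\le 0$. In your version (constant controls with $\frakS_{-}$, near-optimal controls with $\frakS_{+}$) the inequality $v\ge\phi$ (resp.\ $v\le\phi$) points the wrong way to pass from $v(\calO_\theta,X_\theta)$ to $\phi(\calO_\theta,X_\theta)$; moreover your intermediate claim that the generator plus $\ell$ is $\le 0$ for every constant control is false in general (a strictly suboptimal constant control need not satisfy it), and your subsolution conclusion $\inf_a\{\cdots\}\le 0$ is the supersolution inequality, not $\sH\le 0$. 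Swapping the two cases repairs the argument; the remaining ingredients (DPP via the Markov property of the occupied process, localization, measurable selection) are routine as you say, with the one further caveat that applying \cref{thm:compare} for uniqueness presupposes the competing solution satisfies the boundedness hypotheses stated there.
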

\begin{proof} 
Given the regularity in Proposition \ref{prop:Lipschitz} and using the techniques developed in \cite{FS}, it is classical that the value function $v$ is a viscosity solution. 
Moreover, the uniqueness of viscosity solution is a direct consequence of Theorem \ref{thm:compare}.
\end{proof}

\section{Technical tools}
\label{sec:tools}
In this section, we outline several concepts that we utilize.

\subsection{Coercivity and coercive  approximations}
\label{ssec:coercise}

We use the notion of coercivity to construct
extrema of functions
on a topological space $\sX$.
\begin{definition}
\label{def:coercive}
{\rm{We say that a function $ u \in \sU(\sX)$ is}}
upper coercive {\rm{if all hypographs of $u$
are sequentially compact, i.e., for every $c \in \R$, every
sequence $\{\xi_n\}_n$ in $\sX$ with $u(\xi_n) \ge c$
has a limit point $\xi^* \in \sX$.
$\sU_e(\sX)$ denotes the set of upper coercive functions.}} 

{\rm{Similarly, we say that a function 
$w \in \sL(\sX)$ is}}
lower coercive {\rm{if every epigraph of $w$
is sequentially  compact, and $\sL_e(\sX)$ denotes the set of lower coercive functions.}} 
\end{definition}

A direct consequence of the definition is 
the following.
\begin{lemma}
\label{lem:coercive}
Any $u \in \sU_e(\sX)$  is bounded from above
and achieves its maximum on any closed subset of $\sX$.
Analogously, any $w \in \sL_e(\sX)$
is bounded from below and achieves its minimum 
 on any closed subset of  $\sX$.
\end{lemma}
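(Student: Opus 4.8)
The plan is to argue directly from \cref{def:coercive} together with semicontinuity, reducing everything to the case of an upper coercive $u$. Indeed, if $w \in \sL_e(\sX)$ then $-w$ is upper semicontinuous and its superlevel sets $\{-w \ge c\} = \{w \le -c\}$ are exactly the sequentially compact epigraphs of $w$, so $-w \in \sU_e(\sX)$; the two assertions for $w$ then follow from those for $u$ applied to $-w$, after negating. Hence from now on I focus on $u \in \sU_e(\sX)$.

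First I would show $u$ is bounded above: if not, pick $\xi_n$ with $u(\xi_n) \to +\infty$, so $u(\xi_n) \ge 0$ eventually, and upper coercivity with $c = 0$ produces a limit point $\xi^* \in \sX$ and a subsequence $\xi_{n_k} \to \xi^*$; upper semicontinuity then forces $u(\xi^*) \ge \limsup_k u(\xi_{n_k}) = +\infty$, impossible for a real-valued function. For attainment on a nonempty closed $F \subseteq \sX$, set $M := \sup_F u$, which is finite by the previous step (and the case $M = -\infty$ is vacuous), take a maximizing sequence $\xi_n \in F$ with $u(\xi_n) \to M$, note $u(\xi_n) \ge M - 1$ for large $n$, and apply upper coercivity with $c = M - 1$ to extract a subsequential limit $\xi^* \in \sX$. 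Closedness of $F$ gives $\xi^* \in F$, hence $u(\xi^*) \le M$, while upper semicontinuity gives $u(\xi^*) \ge \limsup_k u(\xi_{n_k}) = M$; thus $u(\xi^*) = M$ and the maximum over $F$ is attained. The minimum statement for $w$ is obtained from the reduction above.

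I do not anticipate a genuine obstacle here — this is a textbook compactness-and-semicontinuity argument. The only points to watch are the degenerate cases ($F$ empty, or $u$ identically $-\infty$ on $F$), the implicit passage from a limit point of a sequence to an honestly convergent subsequence (legitimate in the metrizable settings in which the lemma is used, e.g.\ on $\sD_T$, and in any case avoidable by working with $\liminf_n u(\xi_n)$ along the full sequence), and orienting the semicontinuity estimate correctly, i.e.\ $\limsup_k u(\xi_{n_k}) \le u(\xi^*)$ whenever $u$ is upper semicontinuous and $\xi_{n_k} \to \xi^*$.
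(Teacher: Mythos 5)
Your proof is correct and follows essentially the same route as the paper: a contradiction argument using sequential compactness of a superlevel set plus upper semicontinuity for boundedness, and a maximizing-sequence argument with the same two ingredients for attainment (the paper leaves this second step to one sentence, and handles the lower-coercive case implicitly, as you do via $-w$). No gaps.
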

\begin{proof}
Consider an upper coercive function $u \in \sU(\sX)$.
Towards a contraposition, suppose that for each 
positive integer $n$, there is $\xi_n \in \sX$ such that
$u(\xi_n) \ge n$. Since $\{ u \ge 1\}$ is compact by hypothesis,
on a subsequence $n_k$, $\xi_{n_k}$ is convergent.
Let $\xi_*$ be the limit point.  Since $u$ is upper
semicontinuous and real-valued, $\infty > u(\xi_*) 
\ge \lim_k u(\xi_{n_k}) = \infty$.  Hence, $u$ is bounded from above.
Then, the upper semicontinuity
and the upper coercivity implies that the
maximum of $u$ on any closed set is achieved.
\end{proof}
Recall from \cref{sec:notation} that $q(x)= \sqrt{1+|x|^2\, }$ and 
set 
\be
\label{eq:q}
\vt(\mo,x):= \mo(q)+ q(x), \qquad (\mo,x) \in \sD=\sM_q \times \R^d.
\ee
It is classical that the function $\vt \in \sL_b(\sD)$, and 
for any constant $c>0$,
the sub-level set  $\{ (\mo,x) \in \sD \, :\,
\vt(\mo,x) \le c\ \}$ is compact.
Then,
$$
\sD = \cup_{m \ge 1} \sD^m, \qquad
\sD^m := \{(\mo,x) \in \sD \, :\, \vt(\mo,x) \le m\}.
$$
Although $\sD$ is not compact,
each $\sD^m$ is compact giving it a locally
compact structure.  Additionally, $\vt \in \sC^{1,2}(\sD)$ 
and the derivatives are given by,
\be
\label{eq:vtder}
\pmo \vt(\mo,x) = q(x),\qquad
\nabla \vt(\mo,x)= \frac{x}{q(x)},\qquad
\nabla^2 \vt(\mo,x)= \frac{1}{q(x)}\Big(I - \frac{x \otimes x}{q(x)^2}\Big).
\ee

For  $w\in \sL(\sD)$,  $u \in \sU(\sD)$,  and $\gamma >0$, we 
define \emph{coercive approximations} by,
\be
\label{eq:dcoercive}
w_\gamma:= w +\gamma \vt,\qquad
u^\gamma:= u- \gamma \vt.
\ee

\begin{lemma}
\label{lem:delta_coercive} 
For any $w \in \sL_b(\sD)$,  $w_\gamma \in \sL_e(\sD)$,
and for any $u \in \sU_b(\sD)$,  $u^\gamma \in \sU_e(\sD)$.
\end{lemma}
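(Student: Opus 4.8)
The plan is to verify directly that the coercive approximations satisfy the definitions of lower/upper coercivity, using the known properties of $\vt$. By symmetry (replacing $w$ by $-u$ and $\gamma\vt$ by $-\gamma\vt$), it suffices to treat the case of $w \in \sL_b(\sD)$ and show $w_\gamma = w + \gamma\vt \in \sL_e(\sD)$. First I would note that $w_\gamma \in \sL(\sD)$ because $w$ is lower semicontinuous and $\vt \in \sL_b(\sD) \subset \sL(\sD)$ (indeed $\vt$ is continuous on $\sD$), so the sum is lower semicontinuous. It remains to check that every epigraph $\{w_\gamma \le c\}$ is sequentially compact in $\sD$.

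Fix $c \in \R$ and suppose $\{\xi_n\}_n = \{(\mo_n, x_n)\}_n \subset \sD$ satisfies $w_\gamma(\xi_n) \le c$ for all $n$. Since $w$ is bounded from below, say $w \ge -M$, we get $\gamma\, \vt(\xi_n) \le c + M$, hence $\vt(\mo_n, x_n) \le (c+M)/\gamma =: c'$ for all $n$. Thus the entire sequence lies in the sub-level set $\{(\mo,x) \in \sD : \vt(\mo,x) \le c'\}$, which is compact by the property of $\vt$ recalled just after equation \eqref{eq:q}. Therefore $\{\xi_n\}_n$ has a convergent subsequence with limit point $\xi_* \in \sD$, which is exactly what sequential compactness of the epigraph requires. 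This shows $w_\gamma \in \sL_e(\sD)$. The argument for $u^\gamma$ is entirely analogous: $u^\gamma = u - \gamma\vt$ is upper semicontinuous, and on the hypograph $\{u^\gamma \ge c\}$ one has $\gamma\,\vt \le u - c \le (\sup u) - c$ since $u$ is bounded from above, so again the sequence is trapped in a compact sub-level set of $\vt$ and admits a limit point in $\sD$.

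I do not expect any real obstacle here; the content is essentially a one-line consequence of the boundedness of $w$ (resp.\ $u$) combined with the compactness of the sub-level sets of $\vt$. The only point requiring a word of care is the lower semicontinuity of $w_\gamma$: one should observe that $\vt$ is in fact continuous (not merely lower semicontinuous) on $\sD$, so that adding or subtracting $\gamma\vt$ preserves the semicontinuity class rather than degrading it — this is why $u^\gamma$ remains upper semicontinuous despite the subtraction.
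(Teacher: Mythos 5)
Your core argument is the same as the paper's: the boundedness of $w$ (resp.\ $u$) traps each sublevel set of $w_\gamma$ (resp.\ superlevel set of $u^\gamma$) inside a sublevel set $\{\vt \le c'\}$, whose compactness supplies the required limit points; this is exactly how the paper proceeds, and that part is correct.

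However, your auxiliary claim that $\vt$ is continuous on $\sD$ is false and should be dropped. On $\sM_q$ with the weak $\sigma(\sC_b(\R^d),\sM)$ topology, the map $\mo \mapsto \mo(q)$ is only lower semicontinuous, not continuous, because $q$ is unbounded: for example, $\mo_n = \tfrac1n \delta_{z_n}$ with $|z_n| = n^2$ converges weakly to the zero measure while $\mo_n(q) \to \infty$. This is precisely why the paper records $\vt \in \sL_b(\sD)$ rather than continuity. The slip is harmless for the lemma, but your stated reason for the upper semicontinuity of $u^\gamma$ must be replaced by the correct (and simpler) one: since $\vt$ is lower semicontinuous, $-\gamma\vt$ is upper semicontinuous, and a sum of two upper semicontinuous functions is upper semicontinuous; likewise $w_\gamma = w + \gamma\vt$ is lower semicontinuous as a sum of lower semicontinuous functions. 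With that correction your proof is complete and coincides with the paper's.
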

\begin{proof}
As $\vt \in \sL_b(\sD)$, it is clear that  $w_\gamma \in \sL_b(\sD)$.
Additionally,
as $w$ is bounded from below and
lower semicontinuous, for any constant $c$,
$\{w_\gamma \le c\}$ is a closed subset of
$\sD^{m_*}$ with ${m_*}=(c - \inf _\sD w)/\gamma$. 
Since $\sD^m$ is compact for every $m$, 
any closed subset of it is also compact.  Hence,
$w_\gamma$ is lower coercive.  The proof for $u$ is essentially
identical.
\end{proof}
       
\subsection{Semijets}\label{sec:semijets}
Following definition is classical in the theory of 
viscosity solutions \cite{userGuide,FS}.
\begin{definition}
{\rm{For $(\mo,x) \in \sD_T$, the}} parabolic superjet 
 {\rm{of $u \in \sU(\sD_T)$ at $(\mo,x)$ is given by,}}
 $$
\sP_{+}^{1,2}u(\mo,x):= \{ ( \pmo \phi(\mo,x), 
\nabla \phi(\mo,x), \nabla^2 \phi(\mo,x))\, :\,
\phi \in  \frakS_{+}u(\mo,x)\}.
$$
 {\rm{The}} parabolic  subjet 
 {\rm{of $w \in \sL(\sD_T)$ at $(\mo,x)\in \sD_T$ is given by,}}
 $$
\sP_{-}^{1,2}w(\mo,x):= \{ ( \pmo \phi(\mo,x), \nabla \phi(\mo,x), \nabla^2 \phi(\mo,x))\, :\,
\phi \in  \frakS_{-}w(\mo,x)\}.
$$
\end{definition} 
Above sets are subsets of $\R \times \R^d \times \SSS^d$,
and their closures are defined by,
$$
\overline{\sP}_{\pm}^{1,2}v(\mo,x) := \{\lim_{n\to \infty}\zeta_n \, : \,  
\zeta_n \in \sP_{\pm}^{1,2}v(\mo_n,x_n), \ 
(\mo_n,x_n, v(\mo_n,x_n)) \rightarrow (\mo,x, v(\mo,x))  \}.
$$

The following equivalent definition of
viscosity solutions follows directly from the
continuity of the Hamiltonian and the definitions.
Indeed, $u \in \sU(\sD_T)$ satisfies \eqref{eq:viscSuper} if and only if
 \begin{equation}
 \label{eq:viscSupJet}
     \sH(\mo,x,\zeta) \ge 0, \qquad \forall \ \zeta \in \overline{\sP}_{-}^{1,2}u(\mo,x),\
     (\mo,x) \in \mathring{\sD_T}.
 \end{equation}
 Similarly, $w\in \sL(\sD_T)$ satisfies \eqref{eq:viscSub} if and only if
 \begin{equation}
 \label{eq:viscSubJet}
     \sH(\mo,x,\zeta) \le 0, \qquad \forall \ \zeta \in \overline{\sP}_{+}^{1,2}w(\mo,x),\
     (\mo,x) \in \mathring{\sD_T}.
 \end{equation}
 The following is a direct consequence of the definitions.
 \begin{lemma}
 \label{lem:Jcont}
 Suppose that $\zeta_n \in \overline{\sP}^{1,2}_{\pm}v(\mo_n,x_n)$, and
$(\zeta_n, \mo_n,x_n, v(\mo_n,x_n)) \to (\zeta, \mo,x, v(\mo,x))$
as $n$ tends to infinity.
 Then,  $\zeta \in \overline{\sP}^{1,2}_{\pm}v(\mo,x)$.
 \end{lemma}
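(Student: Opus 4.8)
The plan is to run the standard diagonal-extraction argument showing that the (sequential) closure of the semijets is already sequentially closed; the only adaptation needed is to record that the topology on $\sD_T$ is metrizable, so that the diagonal selection can be made with a single index.

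\emph{Reduction to a metric setting.} First I would observe, using \cref{ssec:cylindrical}, that on $\sM_T$ the cylindrical norm $\metric$ metrizes the weak topology, so $\sD_T\subset\sM_T\times\R^d$ carries a metrizable topology. Combining the associated metric on $\sD_T$ with the Euclidean distances on the value component $\R$ and on the jet component $\R\times\R^d\times\SSS^d$, every convergence appearing in the definition of $\overline{\sP}^{1,2}_{\pm}$, and in the hypothesis of the lemma, is convergence for one fixed metric, say $\frakd$, on $\sD_T\times\R\times(\R\times\R^d\times\SSS^d)$. Writing $\Xi_n:=(\mo_n,x_n,v(\mo_n,x_n),\zeta_n)$ and $\Xi:=(\mo,x,v(\mo,x),\zeta)$, the hypothesis then reads $\frakd(\Xi_n,\Xi)\to 0$.

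\emph{Diagonal extraction.} Next, for each fixed $n$, I would use the definition of $\overline{\sP}^{1,2}_{\pm}v(\mo_n,x_n)$: there are points $(\mo_{n,m},x_{n,m})\in\sD_T$ and jets $\zeta_{n,m}\in\sP^{1,2}_{\pm}v(\mo_{n,m},x_{n,m})$ such that $(\mo_{n,m},x_{n,m},v(\mo_{n,m},x_{n,m}),\zeta_{n,m})\to\Xi_n$ in $\frakd$ as $m\to\infty$. I would then choose $m=m(n)$ so that the tuple $\widetilde\Xi_n:=(\mo_{n,m(n)},x_{n,m(n)},v(\mo_{n,m(n)},x_{n,m(n)}),\zeta_{n,m(n)})$ satisfies $\frakd(\widetilde\Xi_n,\Xi_n)<1/n$, whence by the triangle inequality $\frakd(\widetilde\Xi_n,\Xi)\le\frakd(\widetilde\Xi_n,\Xi_n)+\frakd(\Xi_n,\Xi)\to 0$.

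\emph{Conclusion.} Finally, I would conclude: the sequence $(\mo_{n,m(n)},x_{n,m(n)})\in\sD_T$ satisfies $(\mo_{n,m(n)},x_{n,m(n)},v(\mo_{n,m(n)},x_{n,m(n)}))\to(\mo,x,v(\mo,x))$, and the jets $\zeta_{n,m(n)}\in\sP^{1,2}_{\pm}v(\mo_{n,m(n)},x_{n,m(n)})$ converge to $\zeta$; hence $\zeta\in\overline{\sP}^{1,2}_{\pm}v(\mo,x)$ directly from the definition of $\overline{\sP}^{1,2}_{\pm}$. The $\pm$ cases are handled identically. I do not anticipate a genuine obstacle here: the only subtlety is that the closure in question is a sequential one and that the ambient topology on $\sD_T$ is metrizable — which is precisely what legitimizes the single-index diagonal choice $m(n)$; no compactness and no property of $v$ beyond what is already encoded in the definition of $\overline{\sP}^{1,2}_{\pm}$ is needed.
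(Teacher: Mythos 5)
Your argument is correct and is exactly the standard diagonal-extraction proof that the paper has in mind when it states \cref{lem:Jcont} as ``a direct consequence of the definitions'': you approximate each $\zeta_n\in\overline{\sP}^{1,2}_{\pm}v(\mo_n,x_n)$ within $1/n$ by an element of $\sP^{1,2}_{\pm}v(\mo_{n,m(n)},x_{n,m(n)})$ and pass to the limit, legitimized by the fact (noted in \cref{ssec:cylindrical}) that $\varrho$ metrizes the weak topology on $\sM_T$, so the whole convergence takes place in one metric. No gap; your proof simply makes explicit what the paper leaves implicit.
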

 
\subsection{Finite-dimensional Projections}
\label{ssec:finite}

We use a regularization technique similar
to the one introduced by Lasry and Lions \cite{LLR} and 
related finite-dimensional projections by Lions \cite{lions_3}
to prove the uniqueness of viscosity solutions 
in Hilbert spaces. 
Recall the separating class $(f_k)_{k\in \N} \subset  \sC^{1}_b(\R^d)$  of subsection 
\ref{ssec:cylindrical}.
For a positive integer $K$,
we  define the finite-dimensional projections by
$$
 \pi_K(\mo) := (\mo(f_1),\ldots,\mo(f_K) )
\in \sR_K := \pi_K(\sM_T).
$$
The range $\sR_K$ depends  on $T$ and is a compact subset of $\R^K$. 
Indeed, as $|\pi_K(\mo)| \le \varrho(\mo) \le |\mo|\le T$,  $\sR_K$ is contained in $\{z\in \R^K  :   |z| \le T\}$.
All the projected objects we define in this subsection, like $\sR_K$,
may depend on $T$, but we suppress this dependence.

We project a given $v :\sD_T \to \R$
onto the \emph{projected state space} $\sR_K\times \R^d$ 
as follows,
\begin{align*}
\Pi^K(v)(z,x) &:= \sup \{v(\mo,x)\, :\, \pi_K(\mo)=z\}, \\
\nonumber
\Pi_K(v)(z,x) &:= \inf \{v(\mo,x)\, :\, \pi_K(\mo)=z\},\qquad
(z,x) \in \sR_K \times \R^d.
\end{align*}
Let $\sM^K(v,z,x)$ be the maximizers of the first expression,
and $\sM_K(v,z,x)$ be the minimizers of the second expression.
Although these sets might be empty, in view of Lemma \ref{lem:coercive},
$\sM^K(u,z,x)$ is non-empty when $u \in \sU_e(\sD)$,
and  $\sM_K(w,z,x)$ is non-empty if  $w \in \sL_e(\sD)$.
We make the simple yet crucial observation that 
\begin{equation}
\label{eq:squeezeV}
    \Pi_K(v)(\pi_K(\mo),x) \le v(\mo,x) 
    \le \Pi^K(v)(\pi_K(\mo),x), \quad \forall (\mo,x) \in \sD.
\end{equation}

\begin{lemma}
\label{lem:Kcoercive}
For any $K$, $u \in \sU_e(\sD_T)$, 
and $w \in \sL_e(\sD_T)$,
we have $\Pi^K(u) \in \sU_e(\sR_K \times \R^d)$
and $\Pi_K(w) \in \sL_e(\sR_K \times \R^d)$.
\end{lemma}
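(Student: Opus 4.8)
The plan is to show that the upper projection $\Pi^K(u)$ inherits both upper semicontinuity and upper coercivity from $u$, and then appeal to symmetry for the lower case. Throughout I would work on $\sR_K \times \R^d$, keeping in mind that $\sR_K$ is compact in $\R^K$, so the only ``escape to infinity'' happens in the $x$-variable.

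First I would establish upper semicontinuity of $\Pi^K(u)$. Fix $(z,x)\in \sR_K\times\R^d$ and a sequence $(z_n,x_n)\to(z,x)$; I want $\limsup_n \Pi^K(u)(z_n,x_n) \le \Pi^K(u)(z,x)$. Since $u\in\sU_e(\sD_T)$, Lemma~\ref{lem:coercive} guarantees the sup defining $\Pi^K(u)(z_n,x_n)$ is attained at some $\mo_n \in \sM^K(u,z_n,x_n)$, i.e.\ $\pi_K(\mo_n)=z_n$ and $\Pi^K(u)(z_n,x_n)=u(\mo_n,x_n)$. Passing to a subsequence along which $\Pi^K(u)(z_n,x_n)\to \limsup$, I may assume this limit is $\ge \Pi^K(u)(z,x) > -\infty$ (otherwise there is nothing to prove), so $u(\mo_n,x_n)$ is bounded below by some constant $c$ for large $n$; upper coercivity of $u$ then yields a further subsequence with $(\mo_n,x_n)\to(\mo^*,x^*)$ in $\sD_T$. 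Continuity of the pairing maps $\mo\mapsto\mo(f_k)$ on $\sM_T$ (recall $\metric$ metrizes the weak topology there, and each $f_k\in\sC_b^1$) gives $\pi_K(\mo^*)=\lim \pi_K(\mo_n)=\lim z_n = z$, and $x^*=x$. Upper semicontinuity of $u$ then gives $\limsup_n \Pi^K(u)(z_n,x_n) = \limsup_n u(\mo_n,x_n) \le u(\mo^*,x) \le \Pi^K(u)(z,x)$, as desired.

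Next I would show upper coercivity of $\Pi^K(u)$. Fix $c\in\R$ and a sequence $(z_n,x_n)$ in $\sR_K\times\R^d$ with $\Pi^K(u)(z_n,x_n)\ge c$. Again by Lemma~\ref{lem:coercive} pick maximizers $\mo_n$ with $\pi_K(\mo_n)=z_n$ and $u(\mo_n,x_n)=\Pi^K(u)(z_n,x_n)\ge c$. Then $(\mo_n,x_n)$ lies in the hypograph $\{u\ge c\}$, which is sequentially compact in $\sD_T$ by hypothesis, so along a subsequence $(\mo_n,x_n)\to(\mo^*,x^*)\in\sD_T$; in particular $x_n\to x^*$ and, by continuity of $\pi_K$ on $\sM_T$ as above, $z_n=\pi_K(\mo_n)\to \pi_K(\mo^*)=:z^*\in\sR_K$. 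Hence $(z_n,x_n)\to(z^*,x^*)\in\sR_K\times\R^d$, which exhibits the required limit point and proves $\Pi^K(u)\in\sU_e(\sR_K\times\R^d)$.

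Finally, the statement for $\Pi_K(w)$ with $w\in\sL_e(\sD_T)$ follows by applying the argument above to $u=-w$, noting $\Pi^K(-w)=-\Pi_K(w)$ and $\sU_e(\sD_T)=-\sL_e(\sD_T)$. The main obstacle is the first part (upper semicontinuity), because the supremum in the definition of $\Pi^K$ is over a non-compact fibre in $\sM_T$ and must be controlled by combining the attainment of maximizers (via coercivity of $u$) with the weak compactness of hypographs and the continuity of $\pi_K$ on $\sM_T$; the coercivity part is comparatively routine once the same ingredients are in place.
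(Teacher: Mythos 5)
Your argument is correct and follows essentially the same route as the paper's proof: pick extremizers on the fibres (which exist by Lemma~\ref{lem:coercive}), use the coercivity of $u$ (resp.\ $w$) to extract a convergent subsequence of the extremizers, pass limits through $\pi_K$ via weak convergence, and invoke the semicontinuity of the original function; the only cosmetic differences are that you treat the upper projection and deduce the lower case by the symmetry $\Pi^K(-w)=-\Pi_K(w)$, whereas the paper argues the lower case directly and declares the other ``essentially identical.''
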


\begin{proof} Set $w_K:=\Pi_K(w)$
and consider a sequence $(z_n,x_n)$ with 
$w_K(z_n,x_n) \le c$ for some constant $c$.
Choose $\mo_n \in \sM_K(w,z_n,x_n)$.
Then, $w(\mo_n,x_n) =w_K(z_n,x_n) \le c$,
and by the coercivity of $w$, there is $(\mo_*,x_*) \in \sD$
and a subsequence,
denoted by $n$ again, such that
$\lim_n (\mo_n,x_n) =(\mo_*,x_*)$.
Then, $\lim_nz_n = \lim_n \pi_K(\mo_n)
= \pi_K(\mo_*) =:z_*$ which follows from the weak convergence of $\mo_n$ to $\mo_*$. 
Hence, $(z_*,x_*)$ is a
limit point of the sequence $\{(z_n,x_n)\}_n$,
proving the compactness of epigraphs of $w_K$
in the space $\sR_K \times \R^d$.

Consider a sequence $\{(z_n,x_n)\}_n$
satisfying   $\lim_n (z_n,x_n,w_K(z_n,x_n))=(z_*,x_*,w_*)$. 
To prove the lower semicontinuity of
$w_K$, we need to show that $w_* \ge  w_K(z_*,x_*)$.
Indeed, by the same argument as above,
any sequence $\mo_n \in \sM_K(w,z_n,x_n)$
has a limit point $\mo_*$.  In light of $\pi_K(\mo_*)=z_*$
and \eqref{eq:squeezeV}, $w_K(z_*,x_*) \le w(\mo_*,x_*)$.
We now use the lower
semicontinuity of $w$ to arrive at
$$
w_*=\lim_n w_K(z_n,x_n) =\liminf_n w(\mo_n,x_n) \ge 
w(\mo_*,x_*) \ge w_K(z_*,x_*).
$$
\end{proof}

The sub and super differentials and the sub and superjets 
in finite dimensional spaces are classical.  Indeed, 
for any  $\tilde v :\sR_K\times \R^d \to \R$, we follow
\cite{userGuide,CI}  and define 
the parabolic  sub and superjets 
$\sP_{\pm}^{1,2}\tilde{v}(z,x), \overline{\sP}_{\pm}^{1,2}\tilde{v}(z,x)$
as subsets of $\R^K \times \R^d \times \SSS_d$
using the Taylor expansion restricted to the set $\sR^K \times \R^d$.  
These sets depend on the domain $\sR_K\times \R^d $
of $\tilde v$ and they have the following representation (see
\cite[Page 11]{userGuide}),
\begin{align*}
\sP_{+}^{1,2}\tilde v(z,x)&= \{ ( \nabla_z \phi(z,x), 
\nabla_x \phi(z,x), \nabla_x^2 \phi(z,x))\, :\,
\phi \in  \frakS_{+}\tilde v(z,x)\}\\
\sP_{-}^{1,2}\tilde v(z,x)&= \{ ( \nabla_z \phi(z,x), 
\nabla_x \phi(z,x), \nabla_x^2 \phi(z,x))\, :\,
\phi \in  \frakS_{-}\tilde v(z,x)\},
\end{align*}
where for $(z,x) \in \sR_K \times \R^d$,
 \begin{align*}
 \frakS_{+}\tilde v(z,x) &:= \{\phi \in \sC^{1,2}(\R^K\times \R^d)\, : \,
0=(\tilde v-\phi)(z,x)  = \max_{\sR_K \times  \R^d} (\tilde v-\phi) \},\\
\frakS_{-}\tilde v(z,x) &:= \{\phi \in \sC^{1,2}(\R^K\times \R^d)\, : \,
0=(\tilde v-\phi)(z,x)  = \min_{\sR_K \times  \R^d} (\tilde v-\phi) \}.
\end{align*}
We remark that here we do not require $(z,x)$ to be an interior point of $\sR_K \times \R^d$, see Remark \ref{rem:warning} below. 
We map the sub and superjets of $\tilde v$ into smaller sets as follows:
$$
\overline{\sP}_{K,\pm}^{1,2}\tilde{v}(z,x) 
:=\{ (\tilde\theta\cdot(f_1(x),\ldots,f_K(x)),\Delta,\Gamma)\, :\,
(\tilde\theta,\Delta,\Gamma) \in \overline{\sP}_{\pm}^{1,2}\tilde{v}(z,x) \},
$$
and $\sP_{K,\pm}^{1,2}\tilde{v}(z,x)$ is defined analogously.
Then,
for any $v:\sD_T \to \R$, $\tilde v :\sR_K\times \R^d \to \R$,
$$
\sP_{\pm}^{1,2}v(\mo,x),\  \overline{\sP}_{\pm}^{1,2}v(\mo,x), \
\sP_{K,\pm}^{1,2}\tilde{v}(z,x),\ \overline{\sP}_{K,\pm}^{1,2}\tilde{v}(z,x) 
\subset \R \times \R^d \times \SSS_d.
$$
Using the 
projection maps $\Pi_K, \Pi^K$,
we connect the semijets in $\sR_K\times \R^d$ with those in $\sD_T$. 
\begin{lemma}
\label{lem:maximizers}
For any $(z,x)\in \sR_K\times \R^d$ and
$w \in \sL(\sD_T)$,
$$
\sP_{K,-}^{1,2}w_K(z,x) \subset \sP_-^{1,2}w(\mo,x), \quad \forall  \mo \in \sM_K(w,z,x),
$$
where $w_K:=\Pi_K(w)$.  If additionally $w$ is lower coercive,
then
\begin{equation*}
\overline{\sP}_{K,-}^{1,2}w_K(z,x) \subset \bigcup_{\mo \in \sM_K(w,z,x)} \overline{\sP}_{-}^{1,2}w(\mo,x).
\end{equation*}

Similarly, for any $(z,x)\in \sR_K\times \R^d$ and 
$u \in \sU(\sD_T)$ with  $u^K:=\Pi^K(u)$,
$$
\sP_{K,+}^{1,2}u^K(z,x) \subset \sP_+^{1,2}u(\mo,x), \quad \forall  \mo \in \sM^K(u,z,x).
$$
If $u \in \sU_e(\sD)$, then
\begin{equation*}
\overline{\sP}_{K,+}^{1,2}u^K(z,x) \subset \bigcup_{\mo \in \sM^K(u,z,x)} \overline{\sP}_+^{1,2}u(\mo,x).
\end{equation*}
\end{lemma}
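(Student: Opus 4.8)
The plan is to lift test functions from the finite–dimensional space $\sR_K\times\R^d$ up to $\sD_T$ by precomposition with the projection $\pi_K$. Fix $(z,x)\in\sR_K\times\R^d$, a minimizer $\mo\in\sM_K(w,z,x)$, and $\phi\in\frakS_-w_K(z,x)$, so that $\phi\in\sC^{1,2}(\R^K\times\R^d)$, $(w_K-\phi)(z,x)=0$, and $w_K-\phi$ attains its minimum over $\sR_K\times\R^d$ at $(z,x)$. Set $\psi(\mo',x'):=\phi(\pi_K(\mo'),x')$ on $\sD_T$. Since $\mo'\mapsto\pi_K(\mo')$ is linear and weakly continuous and $\phi$ is $\sC^{1,2}$, one reads off $\psi\in\sC^{1,2}(\sD_T)$ with $\nabla\psi(\mo',x')=\nabla_x\phi(\pi_K(\mo'),x')$ and $\nabla^2\psi(\mo',x')=\nabla^2_x\phi(\pi_K(\mo'),x')$; and because $\pi_K(\mo'+h\delta_{x'})=\pi_K(\mo')+h(f_1(x'),\dots,f_K(x'))$, the definition \eqref{eq:occDerivatives} yields the lifting identity
$$
\pmo\psi(\mo',x')=\nabla_z\phi(\pi_K(\mo'),x')\cdot\big(f_1(x'),\dots,f_K(x')\big),
$$
which is precisely the map used to build $\sP_{K,-}^{1,2}$ out of $\sP_-^{1,2}w_K$.

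It then remains to check that $\psi$ is a legitimate test function for $w$ at $(\mo,x)$. Since $\pi_K(\mo)=z$ and $\mo$ is a minimizer, $w(\mo,x)=w_K(z,x)=\phi(z,x)=\psi(\mo,x)$; and for any $(\mo',x')\in\sD_T$ we have $\pi_K(\mo')\in\sR_K$, so by the squeeze \eqref{eq:squeezeV} and the minimality of $(z,x)$ for $w_K-\phi$,
$$
(w-\psi)(\mo',x')=w(\mo',x')-\phi(\pi_K(\mo'),x')\ \ge\ w_K(\pi_K(\mo'),x')-\phi(\pi_K(\mo'),x')\ \ge\ 0 .
$$
Hence $\psi\in\frakS_-w(\mo,x)$, and since $(\nabla_z\phi,\nabla_x\phi,\nabla^2_x\phi)(z,x)$ ranges over all of $\sP_-^{1,2}w_K(z,x)$ as $\phi$ ranges over $\frakS_-w_K(z,x)$, evaluating the derivatives of $\psi$ at $(\mo,x)$ and using the lifting identity gives the first inclusion $\sP_{K,-}^{1,2}w_K(z,x)\subset\sP_-^{1,2}w(\mo,x)$. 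The superjet statement for $u$, $u^K=\Pi^K(u)$ is obtained verbatim with minima replaced by maxima and the squeeze \eqref{eq:squeezeV} used on the other side.

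For the closed semijets, assume $w\in\sL_e(\sD_T)$ and take $\zeta=(\tilde\theta\cdot(f_1(x),\dots,f_K(x)),\Delta,\Gamma)\in\overline{\sP}_{K,-}^{1,2}w_K(z,x)$, with $(\tilde\theta,\Delta,\Gamma)=\lim_n(\tilde\theta_n,\Delta_n,\Gamma_n)$, $(\tilde\theta_n,\Delta_n,\Gamma_n)\in\sP_-^{1,2}w_K(z_n,x_n)$ and $(z_n,x_n,w_K(z_n,x_n))\to(z,x,w_K(z,x))$. By \cref{lem:coercive}, $\sM_K(w,z_n,x_n)\ne\emptyset$; pick $\mo_n$ in it, so the first part gives $\zeta_n:=(\tilde\theta_n\cdot(f_1(x_n),\dots,f_K(x_n)),\Delta_n,\Gamma_n)\in\sP_-^{1,2}w(\mo_n,x_n)$ with $w(\mo_n,x_n)=w_K(z_n,x_n)$ bounded. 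Lower coercivity extracts a subsequence along which $(\mo_n,x_n)\to(\mo_*,x)$; weak continuity of $\pi_K$ forces $\pi_K(\mo_*)=z$, and lower semicontinuity of $w$ together with \eqref{eq:squeezeV} gives $w_K(z,x)\le w(\mo_*,x)\le\liminf_n w(\mo_n,x_n)=w_K(z,x)$, hence $\mo_*\in\sM_K(w,z,x)$. Continuity of the $f_k$ then yields $\zeta_n\to\zeta$ and $(\mo_n,x_n,w(\mo_n,x_n))\to(\mo_*,x,w(\mo_*,x))$, so $\zeta\in\overline{\sP}_-^{1,2}w(\mo_*,x)$ by definition, which is the claimed inclusion; the $u$ case is symmetric. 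The one genuinely delicate point is this last compactness step, where lower coercivity (to produce $\mo_*$), lower semicontinuity (to recognize it as a minimizer), and weak continuity of $\pi_K$ (to pin the projected coordinate to $z$) must all be invoked together; everything else is routine Taylor–expansion bookkeeping through the linear map $\pi_K$.
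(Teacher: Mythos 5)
Your proposal is correct and follows essentially the same route as the paper: lift the finite-dimensional test function by precomposing with $\pi_K$, use the squeeze inequality \eqref{eq:squeezeV} together with the minimality/maximality of $\mo$ to verify it is a valid test function for $w$ (resp.\ $u$), and handle the closed semijets by combining coercivity, lower (resp.\ upper) semicontinuity, and weak continuity of $\pi_K$ to produce a limiting extremizer. The only cosmetic difference is that you compute $\pmo\psi$ directly from the definition \eqref{eq:occDerivatives} while the paper passes through the linear derivative via \eqref{eq:OccvsLinearDerivative}; both are fine.
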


\begin{proof}
Fix $(z,x) \in \sR_K \times \R^d$ and $(\theta,\Delta,\Gamma) \in \sP_{K,-}^{1,2}w_K(z,x)$.  Then
by definition, there exists
$\varphi \in \frakS_{+}w_K(z,x)$ such that
$$
\theta = \nabla_z \varphi(z,x)\cdot (f_1(x),\ldots,f_K(x)),\
\Delta = \nabla_x \varphi(z,x),\
\Gamma = \nabla_x^2 \varphi(z,x).
$$
Since $\varphi \in \frakS_{+}w_K(z,x)$, we have $ \varphi \le w_K$.
Set also 
$$
\phi(\mo,x):= \varphi(\pi_K(\mo),x), \quad (\mo,x) \in \sD.
$$
Then, for any $(\mo',x')\in \sD$, using \eqref{eq:squeezeV} and 
$ \varphi \le w_K$ we arrive at
$$
w(\mo',x') \ge w_K(\pi_K(\mo'),x') \ge \varphi(\pi_K(\mo'),x') = \phi(\mo',x').
$$

Additionally for any $\mo \in \sM_K(w,z,x)$, $\pi_K(\mo)=z$
and 
$$
w(\mo,x) = w_K(z,x) = \varphi(z,x) = \phi(\mo,x).
$$
Summarizing, we have shown that,
$$
0= (w-\phi)(\mo,x) \le (w-\phi)(\mo',x'),\qquad 
\forall  (\mo',x') \in \sD_T.
$$
Hence, $\phi \in \frakS_{+}w(\mo,x)$.  Moreover,
\begin{align*}
\dmo \phi(\mo,x)(\cdot)&= 
\nabla_z \varphi(\pi_K(\mo),x) \cdot (f_1(\cdot),\ldots, f_K(\cdot)),\\[0.5em]
\Longrightarrow \quad
\pmo \phi(\mo,x)&=\dmo \phi(\mo,x)(x)=
\nabla_z \varphi(\pi_K(\mo),x) \cdot (f_1(x),\ldots, f_K(x))=\theta.
\end{align*}
Since $\nabla_x \phi(\mo,x)= \nabla_x \varphi(z,x)=\Delta$ and
$\nabla_x^2 \phi(\mo,x)= \nabla_x^2 \varphi(z,x)=\Gamma$, we conclude
that 
$$
(\theta,\Delta,\Gamma) \in \sP_{-}^{1,2}w(\mo,x).
$$

Now suppose that  $\zeta = (\theta,\Delta,\Gamma) \in \overline{\sP}_{K,-}^{1,2}w_K(z,x)$.
Then, there are $(x_n,z_n) \to (x,z)$ and $\zeta_n \to \zeta$ such that 
$\zeta_n \in \sP_{K,-}^{1,2}w_K(z_n,x_n)$ and 
$w_K(z_n,x_n) \to w_K(z,x)$.  Since $w \in \sL_e(\sD)$,
$\sM_K(w,z_n,x_n)$ is non-empty and by the above result there is
$\mo_n \in \sM_K(w,z_n,x_n)$ such that $\zeta_n \in \sP_{+}^{1,2}w(\mo_n,x_n)$.
In particular, since $w_K(z_n,x_n) \to w_K(z,x)$, the sequence $w_K(z_n,x_n)$ is uniformly
bounded from above.  Set $m:=\sup_n w_K(z_n,x_n)$. 
As $w(\mo_n,x_n) = w_K(z_n,x_n)$, 
$(\mo_n, x_n) \in \{ w \le m\}$.
Then by the lower coercivity
of $w$, there is $(\mo,x) \in \sD$ and a subsequence, denoted by $n$ again, such that
$(\mo_n,x_n) \to (\mo,x)$.  

We claim that $\mo \in \sM_K(w,z,x)$
and $\zeta \in \overline{\sP}_-^{1,2}w(\mo,x)$.  Indeed,
$$
\pi_K(\mo)= \lim_n  \pi_K(\mo_n)= \lim_n z_n = z,
\quad \Longrightarrow \quad
w(\mo,x) \ge w_K(\pi_K(\mo),x)=w_K(z,x).
$$
Moreover, since $\lim_n w_K(z_n,x_n) = w_K(z,x)$
and $w\in \sL(\sD_T)$, 
$$
w_K(z,x) \le w(\mo,x)
\le \liminf_n w(\mo_n,x_n)
=\lim_n w_K(z_n,x_n) = w_K(z,x).
$$
Hence, $\lim_n w(\mo_n,x_n)=w(\mo,x)=w_K(z,x)$,
implying that $\mo \in \sM_K(w,z,x)$.
Since the sequence $(\zeta_n,\mo_n,x_n, w(\mo_n,x_n))$
converges to $(\zeta,\mo,x,w(\mo,x))$ and
$\zeta_n \in \sP_{K,-}^{1,2}w_K(z_n,x_n)$,
we conclude that $\zeta \in \overline{\sP}_{-}^{1,2}w(\mo,x)$, 
proving the second statement for $w$.

The proof for $u$ is essentially identical.
\end{proof}

\section{Crandall-Ishii-Lions (Second-Order)  Lemma}
\label{sec:CI}

We write $\boldx = (\ux,\ox)$, $\boldo = (\umo,\omo)$,
$\pi_K(\boldo) = (\pi_K(\umo), \pi_K(\omo))$, 
 and 
for a given $\Phi:\sD_T^2 \to \R$, we also  write $\Phi(\boldo,\boldx) = \Phi(\umo,\ux,\omo,\ox)$ . 
Let $I$ be the identity matrix in $\SSS^d$. 
For $\Gamma_1,\Gamma_2 \in \SSS^{d}$, introduce the block diagonal matrices
in $\SSS^{2d}$ by, 
\begin{equation*}
    \text{diag}(\Gamma_1,\Gamma_2) = 
    \begin{pmatrix}
        \Gamma_1 & 0 \\
        0 & \Gamma_2
    \end{pmatrix},
    \qquad
   \bG =   \begin{pmatrix}
      I & -I \\
        -I & I
 \end{pmatrix} .
    \end{equation*}
Then,  ${\bG \boldx \cdot \boldx} = |\ux-\ox|^2$,
and $\boldI = \text{diag}(I,I) $ is the identity matrix in $\SSS^{2d}$.

\begin{lemma}\label{lem:CI}
   \textnormal{\textbf{(Crandall-Ishii-Lions Lemma)}}  Suppose that $\uv \in \sU_e(\sD_T)$, $\ov \in \sL_e(\sD_T)$,
    $\varepsilon >0$, and  $(\boldo^*,\boldx^*) = (\umo^*,\omo^*,\ux^*,\ox^*)$ %$(\boldo^*,\boldx^*) = 
    %((\umo^*,\ux^*),(\omo^*,\ox^*)) \in \sD_T^2$ 
    is a maximizer of 
     \begin{equation*}
     \label{eq:DOUBLE}
     \Phi: \sD_T^2 \to \R, \quad   \Phi(\boldo,\boldx) = 
     \uv(\umo,\ux)-\ov(\omo,\ox) - \frac{1}{2\varepsilon}\rho(\umo-\omo,\ux-\ox)^2.  
     \end{equation*}
     Then, there exist
     $\underline{\Gamma},\overline{\Gamma} \in \SSS^d$ such that 
     \begin{align}
         (\Theta(\ux^*),\Delta,\underline{\Gamma}) \in 
         \overline{\sP}_+^{1,2}\uv(\umo^*,\ux^*), 
         \qquad  (\Theta(\ox^*&),\Delta,\overline{\Gamma}) \in 
         \overline{\sP}_-^{1,2}\ov(\omo^*,\ox^*), 
         \label{eq:CI1} 
         \\[0.5em]
         -\frac{3}{\varepsilon}
          \boldI
           \le 
\textnormal{diag}( \underline{\Gamma}, -\overline{\Gamma})
    \le &\frac{3}{\varepsilon}\bG,
    \label{eq:CI2}
     \end{align}
     where $\Delta = (\ux-\ox)/\varepsilon$, and $\Theta \in \sC_b(\R^d)$ is given by 
\be
     \label{eq:Theta}
    \Theta (x) = \dmo \Psi(\umo^*)(x) = \frac{1}{\varepsilon}\sum_{k} 
   (\umo^*-\omo^*)(f_k)  f_k(x), \quad
   \Psi(\mo):= \frac{1}{2\varepsilon} \varrho^2(\mo-\omo^*).
\ee 
\end{lemma}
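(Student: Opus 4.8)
The plan is to freeze the doubled problem on a finite-dimensional space through the projections $\pi_K$ and the envelopes $\Pi^K,\Pi_K$, apply the classical (finite-dimensional) second-order lemma there, and carry the conclusion back to $\sD_T$ via \cref{lem:maximizers}, letting $K\to\infty$. A harmless opening reduction: replacing $\Phi$ by $\Phi-\delta\big(\varrho^2(\umo-\umo^*)+\varrho^2(\omo-\omo^*)+|\ux-\ux^*|^2+|\ox-\ox^*|^2\big)$ — still $\sC^{1,2}$, still attaining a maximum at $(\boldo^*,\boldx^*)$, now the \emph{unique} one — I may assume $(\boldo^*,\boldx^*)$ is the strict maximizer; the added term contributes $0$ to every first-order quantity at $(\boldo^*,\boldx^*)$ and only $O(\delta)$ to the Hessians and to the constants below, and is removed by sending $\delta\downarrow 0$ at the very end.

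\emph{Finite-dimensional reduction and the classical lemma.} Fix $K\in\N$. By \cref{lem:Kcoercive}, $\Pi^K(\uv)\in\sU_e(\sR_K\times\R^d)$ and $\Pi_K(\ov)\in\sL_e(\sR_K\times\R^d)$, so the function $\Phi_K(\uz,\ux,\oz,\ox):=\Pi^K(\uv)(\uz,\ux)-\Pi_K(\ov)(\oz,\ox)-\tfrac{1}{2\varepsilon}(|\uz-\oz|^2+|\ux-\ox|^2)$ is upper semicontinuous and upper coercive on the locally compact set $\sR_K\times\R^d\times\sR_K\times\R^d$, hence attains its maximum at some $(\uz_K,\ux_K,\oz_K,\ox_K)$. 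I then apply \cite[Theorem~3.2]{userGuide} with $u_1=\Pi^K(\uv)$, $u_2=-\Pi_K(\ov)$ and penalty $\varphi=\tfrac{1}{2\varepsilon}(|\uz-\oz|^2+|\ux-\ox|^2)$ at that point (no interior assumption is used, cf.\ \cref{rem:warning}); since $A:=D^2\varphi$ satisfies $\|A\|=2/\varepsilon$ and $A^2=\tfrac{2}{\varepsilon}A$, the choice $t=\varepsilon$ gives $A+tA^2=3A$, $\tfrac1t+\|A\|=\tfrac{3}{\varepsilon}$, and restricting the resulting matrices and inequality to the $\R^d$-coordinates yields $\underline\Gamma_K,\overline\Gamma_K\in\SSS^d$ with $-\tfrac{3}{\varepsilon}\boldI\le\mathrm{diag}(\underline\Gamma_K,-\overline\Gamma_K)\le\tfrac{3}{\varepsilon}\bG$, together with $\big(\tfrac{1}{\varepsilon}(\uz_K-\oz_K),\Delta_K,\underline\Gamma_K\big)\in\overline{\sP}_{+}^{1,2}\Pi^K(\uv)(\uz_K,\ux_K)$ and $\big(\tfrac{1}{\varepsilon}(\uz_K-\oz_K),\Delta_K,\overline\Gamma_K\big)\in\overline{\sP}_{-}^{1,2}\Pi_K(\ov)(\oz_K,\ox_K)$, where $\Delta_K=(\ux_K-\ox_K)/\varepsilon$. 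The closure form of \cref{lem:maximizers} (available since $\uv\in\sU_e$, $\ov\in\sL_e$) now produces $\umo_K\in\sM^K(\uv,\uz_K,\ux_K)$ and $\omo_K\in\sM_K(\ov,\oz_K,\ox_K)$ with $\pi_K(\umo_K)=\uz_K$, $\pi_K(\omo_K)=\oz_K$, and $\big(\tfrac{1}{\varepsilon}\sum_{j\le K}(\umo_K-\omo_K)(f_j)\,f_j(\ux_K),\,\Delta_K,\,\underline\Gamma_K\big)\in\overline{\sP}_{+}^{1,2}\uv(\umo_K,\ux_K)$, and the analogous inclusion with $f_j(\ox_K)$ and $\overline\Gamma_K$ for $\ov$.

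\emph{Passage to the limit.} Using \eqref{eq:squeezeV} (which gives $\Phi_K(\pi_K(\umo),\ux,\pi_K(\omo),\ox)\ge\Phi(\umo,\ux,\omo,\ox)$) together with the uniform tail bound $0\le\varrho^2(\mo)-|\pi_K(\mo)|^2=\sum_{k>K}|\mo(f_k)|^2\le|\mo|^2\sum_{k>K}\|f_k\|_{\sC^1}^2\to 0$ on $\sM_T$ — exactly what \eqref{eq:summable} buys — one checks that $(\umo_K,\ux_K,\omo_K,\ox_K)$ is an $o_K(1)$-maximizer of $\Phi$; by coercivity and the uniqueness of its maximizer, $(\umo_K,\ux_K,\omo_K,\ox_K)\to(\boldo^*,\boldx^*)$, hence $\uv(\umo_K,\ux_K)\to\uv(\umo^*,\ux^*)$, $\ov(\omo_K,\ox_K)\to\ov(\omo^*,\ox^*)$, and $\umo_K(f_j)\to\umo^*(f_j)$, $\omo_K(f_j)\to\omo^*(f_j)$ for every $j$. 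Passing to a subsequence so that $\underline\Gamma_K\to\underline\Gamma$, $\overline\Gamma_K\to\overline\Gamma$ (possible from the uniform bound $\|\underline\Gamma_K\|,\|\overline\Gamma_K\|\le 3/\varepsilon$), summing the series uniformly in $K$ via \eqref{eq:summable} and using the continuity of the $f_j$, the scalar first-order entries converge to $\Theta(\ux^*)$ and $\Theta(\ox^*)$ of \eqref{eq:Theta}, and $\Delta_K\to\Delta$; \cref{lem:Jcont} upgrades these limits to \eqref{eq:CI1}, while \eqref{eq:CI2} is preserved in the limit. Sending $\delta\downarrow0$ (again via \cref{lem:Jcont}) restores the clean constant $3/\varepsilon$ and finishes the proof.

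\emph{Main obstacle.} Non-compactness of $\sD_T$ is already tamed by the coercivity framework of \cref{ssec:coercise}, so it enters only through routine appeals to \cref{lem:coercive,lem:Kcoercive,lem:maximizers}. The genuinely new — and delicate — point is the bookkeeping of the occupation-derivative entry: one must verify that the finite-dimensional slope $\tfrac{1}{\varepsilon}(\uz_K-\oz_K)\in\R^K$, contracted with $(f_1(\ux_K),\dots,f_K(\ux_K))$ as forced by the identity $\pmo(\psi\circ\pi_K)(\mo,x)=\sum_j\partial_{z_j}\psi(\pi_K(\mo),x)\,f_j(x)$ and then sent to the limit, reconstructs precisely $\dmo\Psi(\umo^*)(x)$; this is where the summability \eqref{eq:summable} and the weak continuity of $\mo\mapsto\mo(f_j)$ are indispensable, and it is the crux of adapting Crandall–Ishii–Lions to the occupied setting.
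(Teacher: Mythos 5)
Your proposal is correct and follows essentially the same route as the paper: project via $\Pi^K,\Pi_K$, apply the classical Crandall--Ishii--Lions lemma on the closed set $\sR_K\times\R^d$ with the same choice $t=\varepsilon$ giving the constant $3/\varepsilon$, pull the jets back with \cref{lem:maximizers}, and pass to the limit $K\to\infty$ using coercivity, the tail bound from \eqref{eq:summable}, strictness of the maximizer, and \cref{lem:Jcont}. The only (harmless) deviation is the strictness reduction: you impose a quadratic penalty $\delta(\varrho^2+|\cdot|^2)$ upfront and remove it by a final $\delta\downarrow 0$ limit, which shifts the Hessians by $2\delta I$ and costs one extra compactness argument, whereas the paper performs the reduction at the end with a quartic penalty $\rho(\cdot)^4$ whose first and second derivatives vanish at the maximizer, so the jets and the constant $3/\varepsilon$ are untouched with no additional limit.
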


\begin{proof} We proceed in several steps.
We first assume that $(\boldo^*,\boldx^*)$ is a strict maximizer, and 
consider the general case at the final step.  
\vspace{4pt}

\noindent
{\emph{Step 1 (Projection)}}.   Set
$\uv^K:= \Pi^K(\uv)$,   $\ov_K:= \Pi_K(\ov)$,
and 
$$
\Phi_K(\boldz,\boldx)  :=   
\uv^K(\uz,\ux)-\ov_K(\oz,\ox) 
- \frac{1}{2\varepsilon}\left(|\uz-\oz|^2+|\ux-\ox|^2 \right).
$$
By Lemma \ref{lem:Kcoercive}, $\uv^K \in \sU_e(\sR_K \times \R^d)$
and $\ov_K \in \sL_e(\sR_K \times \R^d)$.  
Therefore,  $\Phi_K$ achieves its maximum at, 
say, $(\boldz_K,\boldx_K)$.
Additionally, by \eqref{eq:squeezeV},
 \be
 \label{eq:max}
 \Phi(\boldo,\boldx) \le\Phi_K(\pi_K(\boldo),\boldx) \le  
 \Phi_K(\boldz_K,\boldx_K),
 \qquad \forall (\boldo, \boldx) \in \sD^2_T.
 \ee
Moreover,  for any $\boldo \in \sE_K(\boldz,\boldx):= \{ \boldo=(\umo,\omo)  \in \sM_T^2\, :\,
\umo \in \sM^K(u,\uz,\ux), \, \omo \in \sM_K(w,\oz,\ox)\, \}$,
\begin{align*}
0\le \Phi_K(\boldz,\boldx)  - \Phi(\boldo,\boldx)   
&= \frac{1}{2\varepsilon}(\varrho^2(\umo-\omo) -  |\pi_K(\umo) -\pi_K(\omo)|^2)\\[0.5em]
&=\frac{1}{2\varepsilon}\,  \sum_{k >K} |(\umo-\omo)(f_k)|^2 
\le \frac{1}{2\varepsilon}\, \sum_{k >K} \|f_k\|_{\infty}^2 \, |\umo-\omo|^2 .
\end{align*}
Since $|\mo|\le T$ for all $\mo \in \sM_T$ and 
$\sum_k \|f_k\|_\infty^2 \le 1$,
we conclude that
\be
\label{eq:uniform}
\lim_{K \to \infty} \sup
\{ |\Phi(\boldo,\boldx) -\Phi_K(\boldz,\boldx)|\, :\,
\boldo \in \sE_K(\boldz,\boldx), (\boldz,\boldx) \in \sR_K \times \R^d\}= 0.
\ee  
%\vspace{4pt}

\noindent
{\emph{Step 2 (Finite dimensional Crandall-Ishii-Lions)}}. 
By  the classical Crandall-Ishii-Lions Lemma on the closed set $\sR_K\times \R^d$
\cite[Theorem 3.2]{userGuide}, \cite{CI}, there exists   $\underline{\Gamma}_K,\overline{\Gamma}_K \in \SSS^d$ such that  
      \begin{align}
(\underline{\theta}_K,\Delta,\underline{\Gamma}_K) &\in \overline{\sP}_{+}^{1,2}\uv^K(\uz_K,\ux_K), 
\quad  (\overline{\theta}_K,\Delta,\overline{\Gamma}_K) \in \overline{\sP}_{-}^{1,2}\ov_K(\oz_K,\ox_K), 
\label{eq:CI1Pf} \\[1em]
 -\frac{3}{\varepsilon}
           \boldI
           &\le 
\textnormal{diag}( \underline{\Gamma}_K, -\overline{\Gamma}_K)
    \le \frac{3}{\varepsilon} \bG,
 \label{eq:CI2Pf}
     \end{align}
     where
     $\Delta = (\ux_K - \ox_K)/\varepsilon$, and
     $$
     \underline{\theta}_K = \frac{1}{\varepsilon}(\uz_K - \oz_K) 
     \cdot (f_1(\ux_K),\ldots, f_K(\ux_K)),\quad
     \overline{\theta}_K = \frac{1}{\varepsilon}(\uz_K - \oz_K) 
     \cdot (f_1(\ox_K),\ldots, f_K(\ox_K)).
     $$  
     Moreover, by \eqref{eq:CI1Pf} and \cref{lem:maximizers}, there exist 
     $\boldo_K=(\umo_K,\omo_K) \in \sE_K(\boldz_K,\boldx_K)$. such that 
\be
\label{eq:JK}
(\underline{\theta}_K ,\Delta,\underline{\Gamma}_K) \in \overline{\sP}_+^{1,2}\uv(\umo_K,\ux_K), 
\qquad  (\overline{\theta}_K ,\Delta,\overline{\Gamma}_K) \in \overline{\sP}_-^{1,2}\ov(\omo_K,\ox_K).
\ee

\noindent
{\emph{Step 3 (Convergence of $(\boldo_K,\boldx_K)$)}}.  We claim that 
$\lim_K (\boldo_K,\boldx_K) = (\boldo^*,\boldx^*)$.  Indeed,
by coercivity, arguing as before we conclude that on a subsequence, denoted by $K$ again,
$\lim_K(\boldo_K, \boldx_K)=:(\hat \boldo, \hat \boldx)$ exists.    
Since $\boldo_K \in \sE_K(\boldz_K,\boldx_K)$,
by \eqref{eq:uniform},
$$
\lim_{K \to \infty}  |\Phi(\boldo_K,\boldx_K) -\Phi_K(\boldz_K,\boldx_K)| =0.
$$
Moreover by \eqref{eq:max}, $\Phi_K(\boldz_K,\boldx_K)
 \ge \Phi(\boldo^*,\boldx^*)$.
Combining all these inequalities, we arrive at
$$
\Phi(\boldo^*,\boldx^*)
\ge \Phi(\hat \boldo,\hat \boldx)
\ge \limsup_K \Phi(\boldo_K,\boldx_K)
= \limsup_K\Phi_K(\boldz_K,\boldx_K) \ge
\Phi(\boldo^*,\boldx^*),
$$
where in the second inequality, we used the upper
semi-continuity of $\Phi$.
As $(\boldo^*,\boldx^*)$ is the strict maximizer of $\Phi$,
we conclude that $(\hat \boldo, \hat \boldx)=(\boldo^*,\boldx^*)$
and all above inequalities are equalities. In particular,
\be
\label{eq:uv_converge}
\lim_{K \to \infty} \uv(\umo_K,x_K)=   \uv(\umo^*,x^*),
  \quad \text{and}\quad
   \lim_{K \to \infty} \ov(\omo_K,x_K) =
   \ov(\omo^*,x^*).
\ee

\noindent
{\emph{Step 4 (Passage to limit)}}.
As $\boldz_K= \pi_K(\boldo_K)$, 
we have
$$
     \underline{\theta}_K = \frac{1}{\varepsilon} \, \sum_{k \le K}\, 
      (\umo^*-\omo^*)(f_k) f_k(\ux_K),\qquad
	\overline{\theta}_K = \frac{1}{\varepsilon}\,  \sum_{k \le K}\,
     (\umo^*-\omo^*)(f_k) f_k(\ox_K),
$$
  and recalling $\Theta$ of \eqref{eq:Theta}, this implies that
  $
  \lim_{K \to \infty} (\underline{\theta}_K,\overline{\theta}_K) 
  =  (\Theta(\ux^*),\Theta(\ox^*)).
  $
  Additionally, \eqref{eq:CI2Pf} implies that the sequences 
  $(\underline{\Gamma}_K)$, $(\overline{\Gamma}_K)$ lie in compact subsets of $\SSS^d$. 
  Hence, there exist 
  a subsequence, denoted by $K$ again,
 and $(\underline{\Gamma}, \overline{\Gamma})$
  satisfying the inequalities  \eqref{eq:CI2}, such that
    $$
  \lim_{K \to \infty} (\underline{\Gamma}_K,\overline{\Gamma}_K) 
  = (\underline{\Gamma}, \overline{\Gamma}).
  $$
 \vspace{4pt}
 In view of \eqref{eq:JK},  \eqref{eq:uv_converge}
  and Lemma \ref{lem:Jcont},  we conclude that 
  $$
 (\Theta(\ux^*),\Delta,\underline{\Gamma}) \in \overline{\sP}_+^{1,2}\uv(\umo^*,\ux^*), 
 \qquad  
 (\Theta(\ox^*),\Delta,\overline{\Gamma}) \in \overline{\sP}_-^{1,2}\ov(\omo^*,\ox^*).
$$

\noindent
\emph{Step 5 (Final step).}
Suppose that $(\boldo^*,\boldx^*)
 = (\umo^*,\omo^*,\ux^*,\ox^*)$ is a maximizer of $\Phi$ which is not necessarily strict.  
 For $ (\umo,\ux),(\omo,\ox) \in \sD_T$, set
 $$
 \tilde{\uv}(\umo,\ux):= \uv(\umo,\ux) -  \rho(\umo-\umo^*,\ux-\ux^*)^4,\qquad
 \tilde{\ov}(\omo,\ox):= \ov(\omo,\ox) +  \rho(\omo-\omo^*,\ox-\ox^*)^4.
$$
 Then, $(\boldo^*,\boldx^*) $     is a \emph{strict} maximizer of 
$$
 \tilde{ \Phi}: \sD_T^2 \to \R, \quad   \bar{\Phi}(\boldo,\boldx) = 
\tilde{\uv}(\umo,\ux)-\tilde{\ov}(\omo,\ox) - \frac{1}{2\varepsilon}\rho(\umo-\omo,\ux-\ox)^2.
$$
Moreover, 
$$
\overline{\sP}_+^{1,2}\uv(\umo^*,\ux^*)
= \overline{\sP}_+^{1,2}\tilde{\uv}(\umo^*,\ux^*),
\quad \text{and} \quad
 \overline{\sP}_-^{1,2}\ov(\omo^*,\ox^*)
= \overline{\sP}_-^{1,2}\tilde{\ov}(\omo^*,\ox^*).
$$
As $\tilde{\uv} \in \sU_e(\sD_T)$, $\tilde{\ov} \in \sL_e(\sD_T)$,
we can apply the  above steps to $\tilde{\uv}, \tilde{\ov}$, 
constructing elements in their sub and super-differentials
with desired properties.
 \end{proof}
 
 \begin{remark}
 \label{rem:warning}
We emphasize that, in the proof of the Crandall-Ishii-Lions Lemma we work on the  closed set $\sR_K \times \R^d$, including its boundary points. In particular, in \eqref{eq:CI1Pf} we do not require $(\uz_K, \ux_K)$ and $(\oz_K, \ox_K)$ to be interior points of $\sR_K \times \R^d$. In fact, this is also the case in the classical paper \cite{CI} and Theorem 3.2 of \cite{userGuide} which is stated on a general subset $\Om$ of a Euclidean space. When we apply this lemma to prove the comparison principle in the next section, however, the viscosity property holds only for interior points of the infinite-dimensional set $\sD_T$. 
 \end{remark}

 \section{Comparison Principle: Proof of Theorem \ref{thm:compare}}
\label{sec:comparison}

Recall the norms $\varrho,\rho$ defined in subsection \ref{ssec:cylindrical}
and $q$ of \eqref{eq:q}.  
For $(\gamma_1,\gamma_2,\beta) \in \R^3$, 
$(\mo,x) \in \sD_T$, $(\theta,\Delta,\Gamma)
\in \R \times \R^d \times \SSS^d$, set
$$
\sH^\gamma_\beta (\mo,x,\theta,\Delta,\Gamma)  
:= \sH (\mo,x,\theta-\beta +\gamma_1 q(x),\Delta + \gamma_2 \nabla q(x),
\Gamma+ \gamma_2 \nabla^2  q(x)).
$$
We next state the main property 
of the Hamiltonian needed in the comparison result.
In the below definition, $(\umo,\ux), (\omo,\ox)\in \sD_T$,
$\underline{\theta}, \overline{\theta} \in \R$
are arbitrary points.

\begin{definition}
\label{def:hamiltonian}
{\rm{We say that $\sH : \sD_T \times \R \times \R^d \times \SSS^d \mapsto \R$,
has the}} Crandall-Ishii-Lions (CIL) property, {\rm{if there exists
a continuous strictly
increasing modulus  $\frakm: [0,\infty) \mapsto  [0,\infty)$
with  $\frakm(0)=0$, and constants $C_*,c_0>0$,
such that for all $\varepsilon, \gamma_1,\gamma_2,\beta \in (0,1]$ we have,
\begin{align*}
\sH^{-\gamma}_{-\beta}(\omo,\ox,\overline{\theta},
\Delta_\vep,\overline{\Gamma}_\vep)
&-\sH^\gamma_\beta(\umo,\ux,\underline{\theta},
\Delta_\vep,\underline{\Gamma}_\vep)  \\[0.5em]
&\le - c_0 \beta+ \frakm(\zeta_\vep)
+C_* \big[ \frac{1}{\vep} \rho^2(\umo-\omo,\ux-\ox)
+ \gamma_1 Q^2(\ux,\ox)+\gamma_2 Q(\ux,\ox)\big],
\end{align*}
where $\Delta_{\varepsilon}:= (\ux-\ox)/\varepsilon$, 
$Q(\ux,\ox):=q(\ux)+q(\ox)$,
$\underline{\Gamma}_\vep,\overline{\Gamma}_\vep
\in \SSS^d$  is any pair satisfying \eqref{eq:CI2}, and
$$
\zeta_\vep:= Q(\ux,\ox)[\varrho(\umo-\omo)+\rho(\umo-\omo,\ux-\ox) (|\Delta_\vep|
+|\underline{\theta}|+|\overline{\theta}|)
+|\underline{\theta}-\overline{\theta}|].
$$}}
\end{definition}

We first establish this property under natural conditions. 
\begin{lemma} 
\label{lem:CI-verify} 
Under Assumption \ref{asm:standing}, the Hamiltonian $\sH$ defined in \eqref{eq:Hamiltonian} 
has the Crandall-Ishii-Lions property.
\end{lemma}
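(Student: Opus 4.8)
The plan is to unwind the definition of $\sH^\gamma_\beta$ and reduce the CIL property to a handful of estimates that follow from the growth and Lipschitz conditions in Assumption~\ref{asm:standing} together with the matrix inequality \eqref{eq:CI2}. Writing out the two Hamiltonians, the difference $\sH^{-\gamma}_{-\beta}(\omo,\ox,\overline{\theta},\Delta_\vep,\overline{\Gamma}_\vep) - \sH^{\gamma}_{\beta}(\umo,\ux,\underline{\theta},\Delta_\vep,\underline{\Gamma}_\vep)$ becomes, after using the elementary inequality $\inf_a A_a - \inf_a B_a \le \sup_a (A_a - B_a)$, a supremum over $a\in A$ of a sum of five pairwise differences: a $\lambda$-term $\NT(\omo,\ox,a)(\overline{\theta}+\beta+\gamma_1 q(\ox)) - \NT(\umo,\ux,a)(\underline{\theta}-\beta+\gamma_1 q(\ux))$, a drift term involving $b$, a second-order term involving $\tfrac12\mathrm{tr}((\sigma\sigma^\top)\cdot)$, a running-cost term involving $\ell$, and the leftover $\gamma_2$-contributions $\nabla q$, $\nabla^2 q$. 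The strategy is to bound each group separately so that the $-c_0\beta$ survives, the $\frakm(\zeta_\vep)$ absorbs all cross terms proportional to the mismatch between the bar and underline coefficients, and the remaining terms fall under the $C_*[\tfrac1\vep \rho^2 + \gamma_1 Q^2 + \gamma_2 Q]$ bracket.

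First I would handle the $\lambda$-term, which is the only place the $-c_0\beta$ is produced. Split it as $\NT(\omo,\ox,a)(\overline\theta - \underline\theta) + (\NT(\omo,\ox,a)-\NT(\umo,\ux,a))\underline\theta + \beta(\NT(\omo,\ox,a)+\NT(\umo,\ux,a)) + \gamma_1(\NT(\omo,\ox,a)q(\ox) - \NT(\umo,\ux,a)q(\ux))$. The $\beta$ piece: here I would \emph{not} use a lower bound but rather note the positivity is the wrong sign — instead one uses $\sH^{-\gamma}_{-\beta}$ carries $+\beta$ and $\sH^{\gamma}_{\beta}$ carries $-\beta$, so collecting signs correctly the coefficient of $\beta$ in the difference is $-(\NT(\omo,\ox,a)+\NT(\umo,\ux,a)) \le -2/c_*$ by \eqref{eq:WeakEllipticity}; set $c_0 := 2/c_*$. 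The term $\NT(\omo,\ox,a)(\overline\theta-\underline\theta)$ is bounded by $c_*(1+\rho(\omo,\ox))|\overline\theta-\underline\theta|$; since $|\overline\theta-\underline\theta|$ appears in $\zeta_\vep$ and is multiplied there by $Q(\ux,\ox)\gtrsim 1 + \rho$, this is $\le \frakm(\zeta_\vep)$ after choosing $\frakm$ appropriately (a modulus of the form $\frakm(r)=Cr + C\sqrt r$ works). The Lipschitz difference $(\NT(\omo,\ox,a)-\NT(\umo,\ux,a))\underline\theta$ is controlled by $c_*\rho(\umo-\omo,\ux-\ox)|\underline\theta|$, again a summand of $\zeta_\vep$. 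The $\gamma_1$ piece splits as $\gamma_1\NT(\omo,\ox,a)(q(\ox)-q(\ux)) + \gamma_1 q(\ux)(\NT(\omo,\ox,a)-\NT(\umo,\ux,a))$; using $|q(\ox)-q(\ux)|\le|\ux-\ox|$, growth of $\NT$, and Lipschitz continuity of $\NT$, both pieces are dominated by a constant times $\gamma_1 Q^2(\ux,\ox)$ plus a $\frakm(\zeta_\vep)$ remainder (the cross term $\rho(\umo-\omo,\ux-\ox)Q(\ux,\ox)$ again lives in $\zeta_\vep$).

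The drift term $b(\omo,\ox,a)\cdot\Delta_\vep - b(\umo,\ux,a)\cdot\Delta_\vep = (b(\omo,\ox,a)-b(\umo,\ux,a))\cdot\Delta_\vep$ is bounded by $c_*\rho(\umo-\omo,\ux-\ox)|\ux-\ox|/\vep \le \tfrac{c_*}\vep\rho^2(\umo-\omo,\ux-\ox)$, which is in the $C_*\cdot\tfrac1\vep\rho^2$ bracket (recall $|\ux-\ox|^2 \le \rho^2(\umo-\omo,\ux-\ox)$). The running-cost term $\ell(\omo,\ox,a)-\ell(\umo,\ux,a)$ is Lipschitz, bounded by $c_*\rho(\umo-\omo,\ux-\ox)$, which is $\le \frakm(\zeta_\vep)$ since $\zeta_\vep \ge Q(\ux,\ox)\rho(\umo-\omo,\ux-\ox)|\Delta_\vep| + Q(\ux,\ox)\varrho(\umo-\omo)$ — here one needs the crude bound $\rho(\umo-\omo,\ux-\ox) \lesssim \varrho(\umo-\omo) + |\ux-\ox|$ and the fact that $\zeta_\vep$ dominates both pieces up to the factor $Q\ge 1$. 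The second-order term is the standard Crandall-Ishii-Lions computation: $\tfrac12\mathrm{tr}((\sigma\sigma^\top)(\omo,\ox,a)\overline\Gamma_\vep) - \tfrac12\mathrm{tr}((\sigma\sigma^\top)(\umo,\ux,a)\underline\Gamma_\vep)$ is rewritten as $\tfrac12\mathrm{tr}\big(\mathrm{diag}(\underline\Gamma_\vep,-\overline\Gamma_\vep) \cdot M M^\top\big)$ where $M = \mathrm{diag}(\sigma(\umo,\ux,a),\sigma(\omo,\ox,a))$ up to sign, and then \eqref{eq:CI2} gives $\le \tfrac3{2\vep}\mathrm{tr}(\bG MM^\top) = \tfrac3{2\vep}|\sigma(\umo,\ux,a)-\sigma(\omo,\ox,a)|_F^2 \le \tfrac{3c_*^2}{2\vep}\rho^2(\umo-\omo,\ux-\ox)$, which goes into the $\tfrac1\vep\rho^2$ bracket. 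Finally the leftover $\gamma_2$-terms $\gamma_2 b\cdot\nabla q(\ox) + \tfrac{\gamma_2}2\mathrm{tr}((\sigma\sigma^\top)\nabla^2 q(\ox)) + (\text{same at }\ux)$ are bounded using $|\nabla q|\le 1$, $\|\nabla^2 q\|\le C$, and the growth of $b,\sigma$ by a constant times $\gamma_2(1 + \rho(\umo,\ux) + \rho(\omo,\ox)) \lesssim \gamma_2 Q(\ux,\ox)$ — wait, more carefully $\gamma_2(1+\rho)^2$ type terms may appear from $\sigma\sigma^\top$, but since the measure variables only enter through $\rho(\mo,x)$ and $q$ is linear in $\mo(q)$, one checks these are still $O(\gamma_2 Q^2)$; to stay inside the stated $\gamma_2 Q$ bracket one uses $\gamma_2 \le 1$ so $\gamma_2 Q^2$... \emph{this is the one place to be careful}: the definition only allows $\gamma_2 Q$, not $\gamma_2 Q^2$, so one must exploit that $\nabla q$ is bounded and $\nabla^2 q(\ox) = q(\ox)^{-1}(I - \ox\otimes\ox/q(\ox)^2)$ has operator norm $\le q(\ox)^{-1} \le 1$, so the $\sigma$-term contributes $\gamma_2 \cdot \tfrac12\|\sigma\|_F^2/q(\ox) \le \gamma_2 \cdot \tfrac{c_*^2}{2} q(\ox) \lesssim \gamma_2 Q(\ux,\ox)$ — the division by $q(\ox)$ is exactly what keeps it linear.

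\textbf{Main obstacle.} The bookkeeping of which cross terms land in $\frakm(\zeta_\vep)$ versus the explicit bracket is delicate, and in particular one must verify that every appearance of a product "(measure/space mismatch) $\times$ (size of $\Delta_\vep$, $\theta$, or $\overline\theta - \underline\theta$)" is genuinely dominated by $\zeta_\vep$ — this requires the inequality $\rho(\umo-\omo,\ux-\ox) \le \varrho(\umo-\omo) + |\ux-\ox|$ and careful use of $Q(\ux,\ox)\ge 2 \ge 1$ as a harmless inflating factor, together with choosing the single modulus $\frakm$ (e.g.\ $\frakm(r) = C_*(r + \sqrt r)$ with $C_*$ depending only on $c_*$) large enough to absorb all of them simultaneously. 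The second genuinely delicate point, flagged above, is keeping the $\gamma_2$ second-order contribution linear in $Q$ rather than quadratic, which hinges on the explicit $1/q(x)$ decay in $\nabla^2 q$ from \eqref{eq:vtder}.
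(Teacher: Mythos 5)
Your proposal is correct and follows essentially the same route as the paper's proof: ellipticity \eqref{eq:WeakEllipticity} yields the $-c_0\beta$ term with $c_0=2/c_*$, the growth conditions control the $\gamma_1,\gamma_2$ perturbations, the Lipschitz conditions together with the bound $\rho(\mo,x)\le (T+1)q(x)$ control the $\NT$, $b$, $\ell$ differences (absorbed into $\frakm(\zeta_\vep)$ and the $\tfrac1\vep\rho^2$ bracket), and the matrix inequality \eqref{eq:CI2} gives the standard $\tfrac{3c_*^2}{\vep}\rho^2$ estimate for the second-order term. Your extra care with the $1/q(x)$ decay of $\nabla^2 q$ to keep the $\gamma_2$-contribution linear in $Q$ (rather than quadratic, given that $\|\sigma\sigma^{\!\top}\|$ has quadratic growth) is exactly the right way to land in the stated $\gamma_2 Q$ bracket, a point the paper's displayed estimate passes over more quickly.
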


\begin{proof}
First we note that for any $(\mo,x) \in \sD_T$,
\be\label{eq:rho}
\rho(\mo,x)=\sqrt{\varrho(\mo)^2+|x|^2} \le |\mo|+|x|
\le T +|x| \le (T+1) q(x).
\ee
Also, as $q\ge 1$, $q(x)\nabla q(x)=x$, and $ q(x)\nabla^2 q(x) = I - x \otimes x/q^2(x)$,
we have,
$$
|\nabla q(\ux)|\le 1,\quad
\|\nabla^2 q(\ux)\|_F \le d.
$$
For $\vep>0$ set
$$
\Xi_\vep:=(\umo,\ux,\underline{\theta}, \Delta_\vep,\underline{\Gamma}_\vep),
\qquad
\Xi^\vep:=(\omo,\ox,\overline{\theta}, \Delta_\vep,\overline{\Gamma}_\vep).
$$
By  ellipticity \eqref{eq:WeakEllipticity}, for any $\beta >0$,
$$
\sH^\gamma_\beta(\Xi_\vep)
\ge \sH^\gamma_0(\Xi_\vep) +\frac{1}{c_*} \beta,\qquad
\sH^{-\gamma}_{-\beta}(\Xi^\vep)
\le \sH^{-\gamma}_0(\Xi^\vep) -\frac{1}{c_*} \beta.
$$
Set $c_0:=2/c_*$,
$\underline{G}(\gamma, \Xi_\vep):= \sH(\Xi_\vep)- \sH^\gamma_0(\Xi_\vep)$, and
$\overline{G}(\gamma,\Xi^\vep):=  \sH^{-\gamma}_0(\Xi^\vep)- \sH(\Xi^\vep)$,
so that
\begin{align*}
\sH^{-\gamma}_{-\beta}(\Xi^\vep)-\sH^\gamma_\beta(\Xi_\vep)
&\le -c_0 \beta + \sH^{-\gamma}_0(\Xi^\vep)-\sH^\gamma_0(\Xi_\vep) \\[0.5em]
&=-c_0\beta +\sH(\Xi^\vep)-\sH(\Xi_\vep)
+\overline{G}(\gamma, \Xi^\vep)+\underline{G}(\gamma, \Xi_\vep) .
\end{align*}
We continue by estimating $\underline{G}(\gamma, \Xi_\vep)$ 
and $\overline{G}(\gamma, \Xi^\vep)$ using the uniform Lipschitz
continuity assumption on $\sigma, b, \ell$.
In  view of Assumption \ref{asm:standing},
\begin{align*}
    |\underline{G}(\gamma, \Xi_\vep)| \le & 
    \sup_{a \in A} \big\{\,  \gamma_1  \NT(\umo,\ux,a)\, q(\ux)
    +   \gamma_2\, \big [
    |b(\umo,\ux,a)|\, |\nabla q(\ux)| + \frac12 |\text{tr}((\sigma \sigma^{\text{T}})(\umo,\ux,a) 
    \nabla^2 q(\ux))|\big ]\big \}\\[0.5em]
    &\le  c_*  \rho(\umo, \ux)
(\gamma_1 q(\ux) +\gamma_2[ |\nabla q(\ux)| +  \|\nabla^2 q(\ux)\|_F]) \\[0.5em]
    &\le  \gamma c_* (1+d) (1+T)(\gamma_1 q^2(\ux) +\gamma_2 q(\ux)).
\end{align*}
We similarly show that
$$
|\overline{G}(\gamma, \Xi^\vep)|  
 \le  \gamma c_* (1+d) (1+T)(\gamma_1 q^2(\ox) +\gamma_2 q(\ox)).
$$
Set $C_1:=  c_* (1+d) (1+T)$, to conclude that
$$
\sH^\gamma_\beta(\Xi_\vep)-\sH^{-\gamma}_{-\beta}(\Xi^\vep)
\le -c_0 \beta +\sH(\Xi_\vep) - \sH(\Xi^\vep)
+\gamma C_1 (\gamma_1Q^2(\ux,\ox) +\gamma_2Q(\ux,\ox)).
$$

We next estimate  $\sH(\Xi_\vep) - \sH(\Xi^\vep)$
in several steps. Set
\begin{align*}
\sI_1&:= \sup_{a \in A} \{ |\NT(\umo,\ux,a) 
-\NT(\omo,\ox,a)|\, |\underline \theta|\}
+\sup_{a \in A} \{ \lVert \sigma(\omo,\ox,a) \rVert_{F}^2\, 
|\underline \theta-\overline \theta| \}\\[0.5em]
&\le c_* \rho(\umo-\omo,\ux-\ox)  |\underline \theta|
+c_* (1+\rho(\omo,\ox))  |\underline \theta-\overline \theta|,\\[1em]
\sI_2&:= \sup_{a \in A} \{ |b(\umo,\ux,a) -b(\omo,\ox,a)| |\Delta_\vep|\}
\le c_* \rho(\umo-\omo,\ux-\ox)  |\Delta_\vep|.
\end{align*}
In view of \eqref{eq:rho}, $\rho(\umo-\omo,\ux-\ox) \le (T+1)(q(\ux)+q(\ox)) = (T+1)Q(\ux,\ox)$. Hence, 
$$
\sI_1+\sI_2 \le c_*(T+1) Q(\ux,\ox)|\underline \theta-\overline \theta|
+  c_* \rho(\umo-\omo,\ux-\ox)(|\Delta_\vep|+|\underline \theta| +\overline \theta|).
$$

%============= other transpose symbol  ==========%
Finally,  set
\begin{align*}
    \sI_3&:= \sup_{a \in A}\{  \,
    {\text{tr}}((\sigma \sigma^{\! \top}
)(\umo,\ux,a) \underline\Gamma_\vep- 
    (\sigma \sigma^{\! \top})(\omo,\ox,a) \underline\Gamma^\vep)\, \}.
\end{align*}
We estimate $\sI_3$ as in 
\cite[Example 3.6]{userGuide} using  \eqref{eq:CI2}. Then,
for every $a \in A$,
\begin{align*}
&{\text{tr}}\left((\sigma \sigma^{\! \top})(\umo,\ux,a) \underline\Gamma_\vep - 
    (\sigma \sigma^{\! \top})(\omo,\ox,a) \underline\Gamma^\vep)\right)\\[0.5em]
    &\hspace{50pt}=
{\text{tr}}\left( {\text{diag}}(\underline \Gamma_\vep, -\overline \Gamma^\vep)\,
\begin{pmatrix}
    \sigma (\umo,\ux,a)\\
    \sigma (\omo,\ox,a)\,
\end{pmatrix}
\left(\sigma^{\! \top}(\umo,\ux,a),\sigma^{\! \top}(\omo,\ox,a)\right) \right)\\[0.5em]
&\hspace{50pt}\le \frac{3}{\vep}
{\text{tr}}\left( {\mathbf{G}}\,
\begin{pmatrix}
    \sigma (\umo,\ux,a)\\
    \sigma (\omo,\ox,a)\,
\end{pmatrix}
\left(\sigma^{\! \top}(\umo,\ux,a),\sigma^{\! \top}(\omo,\ox,a)\right) \right)\\[0.5em]
&\hspace{50pt}=\  \frac{3}{\vep} \|\sigma (\umo,\ux,a)- \sigma (\omo,\ox,a)\|_F^2
\le \frac{3 c_*^2}{\vep}\rho^2(\umo-\omo,\ux-\ox).
\end{align*}
We now use all above inequalities to arrive at
\begin{align*}
    |\sH(\Xi_\vep)& - \sH(\Xi^\vep)|  \le
    \sI_1 +\sI_2+\sI_3\\[0.5em]
   &\le  c_*(T+1) Q(\ux,\ox)|\underline \theta-\overline \theta|
+2  c_* (|\Delta_\vep|+|\underline \theta| +\overline \theta|)+
  \frac{3 c_*^2}{\vep}\rho^2(\umo-\omo,\ux-\ox).
    \end{align*}
\end{proof}
%Above definition is motivated by the comparison result we next prove and holds under natural conditions as shown in Lemma \ref{lem:CI-verify}.

%We are now ready to state and prove the main result.

We are now ready to prove the main result.

\noindent{\bf Proof of Theorem \ref{thm:compare}}.  Towards a counterposition we assume that
$\sup_{\sD_T} (u -w) >0$ and
proceed in several steps to obtain a contradiction.
\vspace{4pt}

\noindent
{\emph{Step 1  (Set up).}}  
We first note that the occupation derivative of 
$\psi(\mo,x) := |\mo|$  is equal to  $\pmo \psi(\mo,x)=  1$ for
any $(\mo,x) \in \sD$.
Next, for $(\gamma_1,\gamma_2,\beta) \in (0,1]$,
$(\umo,\ux), (\omo,\ox) \in \sD_T^2$ we set,
\begin{align*}
\uvgb(\umo,\ux)&:= u(\umo,\ux) -\gamma_1 \umo(q) -\gamma_2q(\ux)
+\beta(|\umo|-T),\\[0.5em]
\ovgb(\omo,\ox)&:= w(\omo,\ox) +\gamma_1 \omo(q) +\gamma_2q(\ox)
-\beta(|\omo|-T).
\end{align*}
A direct argument using the fact that
$\pmo \psi(\mo,x)=  1$ 
shows that $\uvgb$ is a viscosity subsolution 
of $\sH^\gamma_\beta \le 0$ in $\mathring{\sD_T}$, and $\ovgb$ is a viscosity supersolution of $\sH^{-\gamma}_{-\beta} \le 0$.  Moreover by Lemma \ref{lem:coercive},
 $\uvgb\in \sU_e(\sD_T)$, $\ovgb\in \sL_e(\sD_T)$. Hence, there is $\beta_0>0$ such that
\be
\label{eq:counter}
\sup_{\sD_T} (\uvgb -\ovgb) >0 \ge
\sup_{\partial \sD_T} (\uvgb -\ovgb),
\qquad \forall \, 0 \le \beta,\gamma_1, \gamma_2 \le \beta_0 .
\ee
In the remainder of the proof, we fix $\beta=\beta_0>0$ and assume that
the parameters always satisfy $0<\gamma_1,\gamma_2 \le \beta_0$.
Additionally, to simplify the presentation we write
$$
\uvg:=\uv^\gamma_{\beta_0},
\quad \text{and}\quad
\ovg:=\ov_\gamma^{\beta_0}.
$$

\noindent
{\emph{Step 2  (Doubling the variables).}} 
For $(\boldo,\boldx)=((\umo,\ux), (\omo,\ox)) \in \sD_T^2$, 
$\gamma_1,\gamma_2, \vep \in (0,\beta_0]$, set
$$
\Phi(\boldo,\boldx) := \Phi^\gamma_{\vep}(\boldo,\boldx)=
\uvg(\umo,\ux)-\ovg(\omo,\ox) - \frac{1}{2\vep} \rho^2(\umo-\omo,\ux-\ox). 
$$
By Lemma \ref{lem:coercive},  there exists  a maximizer 
$(\bmogbe,\bxgbe)=((\umogbe,\uxgbe),(\omogbe,\oxgbe))
\in \sD_T^2$ of $\Phi^\gamma_{\vep,\beta}$. 
Then, in view of \cref{lem:CI},  there exists 
$(\underline{\Gamma}_{\vep},\overline{\Gamma}_\vep)=
(\underline{\Gamma}^\gamma_\vep,\overline{\Gamma}_\vep^\gamma)
\in (\SSS^d)^2$ satisfying
\eqref{eq:CI2} such that
\begin{align}
%\nonumber
&\qquad\quad   (\underline{\theta}^\gamma_\vep,
\Delta_\vep,\underline{\Gamma}_\vep)
\in \overline{\sP}_+^{1,2}\uvg(\umogbe,\uxgbe), 
\qquad  
( \overline{\theta}^\gamma_\vep,
\Delta_\vep,\overline{\Gamma}_\vep) 
\in \overline{\sP}_-^{1,2}\ovg(\omogbe,\oxgbe),
\label{eq:CI1EpsGamma} \\[1em]
\nonumber
&\underline{\theta}^\gamma_\vep := \Theta^\gamma_\vep(\uxgbe),\qquad
\overline{\theta}^\gamma_\vep: = \Theta^\gamma_\vep(\oxgbe), \quad \Theta^\gamma_\vep = \frac{1}{\vep}\sum\nolimits_{k} (\umogbe -\omogbe)(f_k) \, f_k,\quad
%\\[1em] 
%(\umogbe -\omogbe)(f_k) \, f_k(\cdot)/\vep\\
\Delta_\vep :=\Delta^\gamma_\vep = \frac{1}{\vep}(\uxgbe-\oxgbe).
\end{align}
%   with %$\Theta^\gamma_\vep(\cdot) = \sum\nolimits_{k} 
% %(\umogbe -\omogbe)(f_k) \, f_k(\cdot)/\vep$ and 
% $$
% \Delta_\vep:=\Delta^\gamma_\vep = (\uxgbe-\oxgbe)/\vep,\qquad
% \underline{\theta}^\gamma_\vep := \Theta^\gamma_\vep(\uxgbe),\qquad
% \overline{\theta}^\gamma_\vep: = \Theta^\gamma_\vep(\oxgbe).
% $$

\noindent
 {\emph{Step 3 (Viscosity property).}}
By \eqref{eq:counter}, for all sufficiently small $\gamma,\vep$, 
maximizers of $\Phi^\gamma_\vep$
satisfy $((\umogbe,\uxgbe),(\omogbe,\oxgbe)) \in \mathring{\sD_T}^{\!\!2}$. Then,
 in view of  \eqref{eq:CI1EpsGamma} and the viscosity properties of 
 $\uvg$, $\ovg$, 
 $$
\sH^\gamma_{\beta_0}(\umogbe,\uxgbe,\underline{\theta}^\gamma_\vep, 
 \Delta_\vep,\underline{\Gamma}_\vep)\, \le\, 0 \, \le\,
 \sH^{-\gamma}_{-\beta_0}(\omogbe,\oxgbe,\overline{\theta}^\gamma_\vep, 
 \Delta_\vep,\overline{\Gamma}_\vep).
 $$
We now use the above inequalities and 
   the CIL property from Lemma \ref{lem:CI-verify} to arrive at,
\begin{align}
 \label{eq:H}
0  &\le  \sH^{-\gamma}_{-\beta_0}(\omogbe,\oxgbe,\overline{\theta}^\gamma_\vep, 
\Delta_\vep,\overline{\Gamma}_\vep) -
\sH^\gamma_{\beta_0}(\umogbe,\uxgbe,\underline{\theta}^\gamma_\vep, 
\Delta_\vep,\underline{\Gamma}_\vep)\\[0.5em]
\nonumber
 &\le 
 - c_0 \beta_0+ \frakm(\zeta_\vep^\gamma)
+ C_* [ \rho^2(\umogbe-\omogbe,\uxgbe-\oxgbe)/\vep + \gamma_1 Q^2(\uxgbe,\oxgbe)
+\gamma_2 Q(\uxgbe,\oxgbe)],
 \end{align}
 where  as before $Q(\uxgbe,\oxgbe)=q(\uxgbe)+q(\oxgbe)$ and
 $$
 \zeta_\vep^\gamma=
 Q(\uxgbe,\oxgbe)\, [\varrho(\umogbe-\omogbe)+
 \rho(\umogbe-\omogbe,\uxgbe-\oxgbe)\, 
 (|\Delta_\vep| +|\underline{\theta}^\gamma_\vep|+|\overline{\theta}^\gamma_\vep|)\, 
 +  |\underline{\theta}^\gamma_\vep-\overline{\theta}^\gamma_\vep|].
 $$
 
\noindent
 {\emph{Step 4 (Passage to limit).}} 
 We let the parameters tend to zero in the inequality \eqref{eq:H}
 to obtain a contradiction resulting from \eqref{eq:counter}, hence
 completing the proof.  In order, we first let $\vep \downarrow 0$ 
 then $\gamma_1 \downarrow 0$, and finally $\gamma_2 \downarrow 0$.
 
 We start by using the standard direct arguments as in \cite[Lemma 3.1]{userGuide} to obtain 
\be
\label{eq:limzero}
 \lim_{\vep \downarrow 0}
 \frac{1}{\vep}\, \rho^2(\umogbe -\omogbe,\uxgbe-\oxgbe) =0,\qquad
 \lim_{\gamma_2 \downarrow 0} \,
 \lim_{\gamma_1 \downarrow 0}\, \lim_{\vep \downarrow 0}\,
\gamma_2\,  Q(\uxgbe,\oxgbe)=0.
\ee
In particular, 
\be
\label{eq:Qge}
c(\gamma_1):= \sup_{ \vep,\gamma_2 \in (0,\beta_0]} Q(\uxgbe,\oxgbe) <\infty.
\ee

\noindent
 {\emph{Step 4.a}} ($\vep \downarrow 0$). By their definitions,
 Cauch-Schwarz inequality, and \eqref{eq:summable},
\begin{align*}
|\underline{\theta}^\gamma_\vep-\overline{\theta}^\gamma_\vep|
&=|\Theta^\gamma_\vep(\uxgbe)-\Theta^\gamma_\vep(\oxgbe)|
 \le \frac{1}{\vep}\, \sum_k\, |(\umogbe-\omogbe)(f_k)|\, |f_k(\uxgbe)-f_k(\oxgbe)|\\
& \le \frac{1}{\vep}\, \sum_k\, |(\umogbe-\omogbe)(f_k)|\, \|f_k\|_{\sC^1} |\uxgbe - \oxgbe|\\
&\le  \frac{1}{2\vep}\, \sum_k\,  |(\umogbe-\omogbe)(f_k)|^2\, +\,
\frac{1}{2\vep}\,  |\uxgbe - \oxgbe|^2 \, \sum_k\,   \|f_k\|_{\sC^1}^2\\
& \le \frac{1}{2\vep}\, (\varrho^2(\umogbe -\omogbe) + |\uxgbe-\oxgbe|^2) 
= \frac{1}{2\vep}\, \rho^2(\umogbe -\omogbe,\uxgbe-\oxgbe).
\end{align*}
Additionally, writing $\rho_{\vep,\gamma} = \rho(\umogbe -\omogbe,\uxgbe-\oxgbe)$, we have
\begin{align*}
\rho_{\vep,\gamma} |\underline{\theta}^\gamma_\vep|
&= \frac{1}{2\vep} \rho_{\vep,\gamma}^2
+ \frac{\vep}{2} |\underline{\theta}^\gamma_\vep|^2%\\[0.5em]
= \frac{1}{2\vep}  \rho_{\vep,\gamma}^2
+ \frac{1}{2\vep} 
\Big(\sum_k (\umogbe -\omogbe)(f_k) \, f_k(\uxgbe)\, \Big )^2\\[0.5em]
& \le \frac{1}{2\vep} \rho_{\vep,\gamma}^2
+ \frac{1}{2\vep} \varrho^2(\umogbe-\umogbe).
\end{align*}
% \begin{align*}
% \rho(\umogbe -\omogbe,\uxgbe-\oxgbe)\, |\underline{\theta}^\gamma_\vep|
% &= \frac{1}{2\vep} \rho^2(\umogbe -\omogbe,\uxgbe-\oxgbe)
% + \frac{\vep}{2} |\underline{\theta}^\gamma_\vep|^2\\[0.5em]
% &= \frac{1}{2\vep} \rho^2(\umogbe -\omogbe,\uxgbe-\oxgbe)
% + \frac{1}{2\vep} 
% \Big(\sum_k (\umogbe -\omogbe)(f_k) \, f_k(\uxgbe)\, \Big )^2\\[0.5em]
% & \le \frac{1}{2\vep}\rho^2(\umogbe -\omogbe,\uxgbe-\oxgbe)
% + \frac{1}{2\vep} \varrho^2(\umogbe-\umogbe).
% \end{align*}
Same estimate also holds for $\rho_{\vep,\gamma} |\overline{\theta}^\gamma_\vep|$.
Proceeding similarly, we obtain
$$
\rho_{\vep,\gamma} |\Delta_\vep| \le  
\frac{1}{2\vep}\rho_{\vep,\gamma}^2 
+\frac{1}{2\vep}\  |\uxgbe-\oxgbe|^2.
$$

In view of \eqref{eq:limzero}, \eqref{eq:Qge},
and the above inequalities,
$\lim_{\vep \downarrow 0}\, \zeta^\gamma_\vep =0$,
for every $\gamma_1,\gamma_2 >0$.
Then, we take the limit as $\vep \downarrow  0$ in the inequality \eqref{eq:H}.
Since $m$ in \eqref{eq:H} is continuous with $m(0)=0$, 
the limit of \eqref{eq:H} is the following, 
$$
0 \le - c_0 \beta_0 + C_* \, \lim_{\vep \downarrow 0}\, [\gamma_1 Q^2(\uxgbe,\oxgbe)
+\gamma_2 Q(\uxgbe,\oxgbe)].
$$

\noindent
{\emph{Step 4.b}} ($\gamma_1 \downarrow 0$, then $\gamma_2 \downarrow 0$).
In view of \eqref{eq:Qge},
$\lim_{\gamma_1 \downarrow 0}( \gamma_1 Q^2(\uxgbe,\oxgbe))=0$. Hence,
$$
0 \le - c_* \beta_0 + C_* \lim_{\gamma_1 \downarrow 0} \lim_{\vep \downarrow 0}  
 [\gamma_1 Q^2(\uxgbe,\oxgbe)
+\gamma_2 Q(\uxgbe,\oxgbe)]
= - c_* \beta_0 + C_* \lim_{\gamma_1 \downarrow 0} \lim_{\vep \downarrow 0}  
\gamma_2 \, Q(\uxgbe,\oxgbe).
$$
We now let $\gamma_2$ tend to zero and use \eqref{eq:limzero} to 
conclude that
$$
0 \le - c_* \beta_0 + C_* \lim_{\gamma_2 \downarrow 0}\, \lim_{\gamma_1 \downarrow 0} \, \lim_{\vep \downarrow 0}  
\, \gamma_2 \, Q(\uxgbe,\oxgbe) =-c_* \beta_0.
$$
As $\beta_0, c_*>0$ this clear contradiction implies
that \eqref{eq:counter} cannot hold completing the proof.
\qed

\section{Examples}
\label{sec:examples}
\subsection{Heat Equation}\label{sec:heat}
Let $X$ be a $d-$dimensional, uncontrolled  Brownian motion, that is $b\equiv 0$ and $\sigma = I\in \SSS^d$ in  \eqref{eq:OSDE}. 
We consider the standard time $\Lambda_t = t$  in \eqref{eq:occFlow} (or equivalently   $\Lambda_t = \frac{1}{d}\text{tr}(\langle X \rangle_t)$ since $X$ is Brownian motion) and set  $\ell \equiv 0$. Given  $g \in \sC_b(\partial \sD_T)$,  
 the value function \eqref{eq:valueFct}  is given by 
\begin{equation}
    v(\mo,x) = \E^{\Q}_{\mo,x}[g(\calO_{T-|\mo|},X_{T-|\mo|})] = \E^{\Q}[g(\mo + \calO_{T-|\mo|}^x,x  +X_{T-|\mo|})], 
\end{equation}
with $(\mo,x)\in \sD_T$ and the shifted measure $\calO_{s}^x(B) = \calO_{s}(B-x)$, $B\in \sB(\R^d)$.  
The associated Hamiltonian is $\sH(\mo,x,\theta,\Delta,\Gamma) = -\theta  - \frac{1}{2}\text{tr}(\Gamma)$, and  the dynamic programming equation coincides with 
the (backward) occupied heat equation
\begin{subnumcases}{}
-\partial_\mo u - \frac{1}{2}\triangle u = 0 & \text{ on }   
$\mathring{\sD_T}$, \label{eq:Heat1}\\
      u = g, & \text{ on }  $\partial  \sD_{T} $, 
      \label{eq:Heat2} 
  \end{subnumcases}
and  the Laplacian $\triangle = \sum_{i=1}^d \partial_{x_ix_i}$. 

  \begin{remark}\label{rem:PPDEHeat} Recall the space of continuous paths $\Omega = \calC([0,T];\R^d)$  seen in the Introduction. 
    It is well-known that the path-dependent heat equation $$-\partial_tu(t,\omega) - \frac{1}{2}\textnormal{tr}(\partial_{\omega}^2 u (t,\omega)) = 0, \quad u(T,\cdot) = G \in \calC(\Omega),$$ %with continuous terminal function $G: \Omega\to \R$ 
    does not always admit a classical solution.  One example is $G(\omega)= \omega_{t_0 }$ for some fixed time $t_0 <T$; see \cite[Chapter 11]{ZhangBook}. % and the so-called forward start options  $G(\omega)= (\omega_T - \omega_{t_0})^+$. 
    We note that functionals depending on specific values of the path before the final time $T$ cannot be expressed in terms of the occupation measure $\cO_T(\omega)$ since the latter  erases the chronology of $\omega$. %Hence, we cannot construct   %, in which case $u(t,\omega) = \omega_{t\wedge t_0}$ has a discontinuous space derivative
    Hence, it is not possible to translate the above  counterexample to occupied PDEs. 
    In fact, %we were not able to construct a similar counterexample and 
    we
    conjecture that  the occupied heat equation \eqref{eq:Heat1}-\eqref{eq:Heat2} always admits a classical solution whenever the terminal functional $g:\partial\sD_T\to \R$ is continuous with respect to the weak $\times$ Euclidean topology. 
\end{remark} 

  We now discuss some examples. 

     \begin{example} 
Let $g(\mo,x)  = \mo(B)$ with $B = \{|x| \le 1\}$.  Then $g(\calO_t,X_t) = \int_0^t\mathds{1}_B(X_s)ds$ is the occupation time of Brownian motion in the unit ball. While $g$ is
continuous in $\mo$ with respect to the stronger total variation distance,
 it is not continuous with respect to the weak topology and the corresponding path functional is not continuous either. 
 The value function is given by 
$$v(\mo,x) = \mo(B) + \E^{\Q}[\calO_{T-|\mo|}^x(B)] =  \mo(B) + \int_0^{T-|\mo|}\Q(|x+X_{s}|\le 1)ds. $$  
    Observe that the occupation derivative  
    $\partial_{\mo}v(\mo,x) = \mathds{1}_B(x) \ - \ \Q(|x+X_{T-|\mo|}|\le 1)$ is discontinuous in $x$.  
      Hence   the occupied heat equation 
  \eqref{eq:Heat1}$-$\eqref{eq:Heat2} does \textit{not} admit a classical  solution, motivating the viscosity theory developed throughout. 
 \end{example}

%========== EXAMPLES (together) =========%

% \begin{example}\label{ex:HeatLinear}
%   Let $X$ be Brownian motion and suppose that $g(\mo,x)  = \mo(\phi)$ for some continuous function $\phi\in \sC(\R^d)$.  Then $g(\calO_t,X_t) = \int_0^t\phi(X_s)ds$, and it is easily seen that 
%     $$v(\mo,x) = \mo(\phi) + \E^{\Q}[\calO_{T-|\mo|}^x(\phi)] =  \mo(\phi) + \int_0^{T-|\mo|}\E^{\Q}[\phi(x+X_s)]ds. $$
%     In particular, the occupation derivative of $v$ reads
%     $$\partial_{\mo}v(\mo,x) = \phi(x) + \E^{\Q}[\phi(x+X_{s})]|_{s = T-|\mo|} \delta_{\mo}(T-|\mo|)(x) = \phi(x) - \E^{\Q}[\phi(x+X_{T-|\mo|})].$$ 
% Hence, the regularity of $v$ is directly 
% tied to the regularity of the function $\phi$,
% and the equation does not regularize as in the finite-dimensional
% case. 
% Indeed, $v$ is a classical  solution  of  
% \eqref{eq:Heat1}-\eqref{eq:Heat2} but does not belong to $\sC^{\infty,\infty}(\sD_T)$ in general.  
% \end{example}

\begin{example}\label{ex:HeatLinear2}
\iffalse  
Let $X$ be Brownian motion and suppose that $g(\mo,x)  = \psi(\mo(\phi))$ for some continuous functions $\phi\in \sC(\R^d)$, $\psi\in \sC(\R)$.  Then $v(\mo, x) = u(|\mo|, x, \mo(\phi))$, where
  $$
  u(t,x,y) := \E^{\Q}\Big[\psi\Big( y + \int_0^{T-t}\phi(x+X_s)ds\Big)\Big].
  $$
  It is clear that $u$ is the unique viscosity solution of the following degenerate 
  \mc{parabolic linear partial differential equation}:
  $$
  \partial_t u + \frac12\Delta_{\mc{x}} u + \phi(x) \partial_y u=0,\quad u(T,x,y) = \psi(y).
  $$
  In particular, when $u$ is smooth, we have
  $$
  \partial_{\mo}v(\mo,x) = \partial_t u(|\mo|, x, \mo(\phi)) + \partial_y u(|\mo|, x, \mo(\phi)) \phi(x) = -\frac12 \Delta u(|\mo|, x, \mo(\phi)).
  $$
  
(i) When $\psi$ is smooth, we have
 $$
 \partial_{\mo}v(\mo,x) = \E^{\Q}\Big[\psi'\Big( o(\phi) + \int_0^{T-|o|}\phi(x+X_s)ds\Big)\Big(\phi(x) -\phi(x+X_{T-|\mo|})\Big)\Big].$$

(ii) When $\psi$ is not smooth, in general $u$ is not smooth. For example, when $\phi \equiv 1$, we have $u(t,x,y) = \psi(y + T-t)$, which is not smooth. However, in this case $v(\mo,x) = \psi(T)$ is a constant and thus is obviously smooth. The reason is that, although $\partial_t u$ and $\partial_y u$ do not exist, the combined operator $\partial_t u+ \phi(x)\partial_y u$ exists in this case, and hence $\partial_{\mo} v$ exists.
\fi
{Let $g(\mo,x)  = \psi(\mo(\phi))$ for some 
 functions $\phi\in \sC(\R^d)$, $\psi\in \sC(\R)$.
  $X$ is again  a Brownian motion.}
Then, $v(\mo, x) = u(|\mo|, x, \mo(\phi))$, where
  $$
  u(t,x,y) := \E^{\Q}\Big[\psi\Big( y + \int_0^{T-t}\phi(x+X_s)ds\Big)\Big].
  $$
  It is clear that $u$ is the unique viscosity solution of the following degenerate 
 parabolic linear PDE
$$
-\partial_t u - \frac12\triangle_{x} u - \phi(x) \partial_y u=0,\quad u(T,x,y) = \psi(y).
 $$
 When $\psi$ is smooth,  $u$ is also smooth and
  $$
 \partial_{\mo}v(\mo,x) = \E^{\Q}\Big[\psi'\Big( o(\phi) 
 + \int_0^{T-|o|}\phi(x+X_s)ds\Big)\Big(\phi(x) -\phi(x+X_{T-|\mo|})\Big)\Big].
 $$
We additionally have that 
$$
\partial_{\mo}v(\mo,x) = \partial_t u(|\mo|, x, \mo(\phi)) 
+ \partial_y u(|\mo|, x, \mo(\phi)) \phi(x) = -\frac12 \triangle_{x} u(|\mo|, x, \mo(\phi)).
$$
 
{However, when $\psi$ is not smooth, in general $u$ is not smooth either. 
This on the other hand does not immediately imply that $v$ is also not smooth.
To illustrate this point,
suppose that $\phi \equiv 1$.  Then,  $u(t,x,y) = \psi(y + T-t)$,
is not smooth.  But as $\mo(\phi)=|\mo|$, 
$
v(\mo,x) =u(|\mo|, x, |\mo|)= \psi(T),
$
and hence, $v$ is obviously smooth. 
This is due to the fact that the existence of 
$\partial_{\mo} v$ depends on the smoothness
of  the combined operator $\partial_t u+ \phi(x)\partial_y u$,
and not each one separately. A related discussion can be found in  \cite{BouchardTan} in the case $\phi(x) = x$. }
\end{example}
}

\subsection{Timer options}

Timer options  \cite{carole,GuyonHL},   are contingent claims where the maturity is floating and depends on the realized variance of the underlying asset. More precisely, the buyer of the option receives the payoff as soon as the realized variance exceeds a given threshold which we denote by $T>0$. 
We now derive the pricing PDE of timer options written on a single asset, i.e. $d=1$.  See also Section 5.2 in \cite{Tissot}.  Suppose that the log  price of the asset and its occupation flow evolve according to the OSDE (assuming zero interest rate) 
    $$d\calO_t =  \sigma^2_t 
  \ \delta_{X_t}\ d t , \quad  dX_t = -\sigma_t^2/2dt + \sigma_tdW_t, \quad \sigma_t = \sigma(\calO_t,X_t).$$
  We therefore choose $\lambda(\mo,x,a) = \sigma(\mo,x)^2$ in \eqref{eq:OSDEOcc}. 
Given $g:\partial\sD_T\to\R$, consider  a 
 timer option that pays $g(\cO_{\tau},X_{\tau})$ at  $\tau = \inf \{t\ge 0 : |\calO_t| \ge T \}$.  Since $ |\calO_t| = \langle X \rangle_t = \int_0^t \sigma_s^2 ds$ is the realized variance of the asset up to time $t$, the maturity $\tau$ indeed corresponds to the first time the  realized variance exceeds the  budget $T$. Note also that the option is \textit{exotic} 
 in the sense that the payoff $g(\cO_{\tau},X_{\tau})$ depends on the path $(X_t)_{t\le \tau}$ (through $\cO_{\tau}$). 
 For instance,  a fixed strike  Asian put option is obtained by setting 
$$ g(\mo,x) = \left(K - \frac{1}{|\mo|}\int_{\R} y \mo(dy) \right)^+\; \Longrightarrow \; g(\cO_{\tau},X_{\tau}) = \left(K - \frac{1}{\tau}\int_0^{\tau} X_tdt \right)^+. $$
See \cite[Section 5]{Tissot} for a  list of exotic payoffs expressible in terms of the occupied process. In light of the control problem \eqref{eq:controProblem}, we set $\ell \equiv 0$,   so the value function corresponds to the price of the option, namely
\begin{equation*}
 %   \label{eq:timerOptPrice}
    v(\mo,x) =  
    \E^{\Q}_{\mo,x}
[g(\cO_{\tau},X_\tau)].
\end{equation*} 
The associated  Hamiltonian reads 
\begin{equation}\label{eq:HamiltonianTimer}
    \sH(\mo,x,\theta,\Delta,\Gamma) =  -\Big(\sigma^2(\mo,x)  \theta 
     - \frac{1}{2}\sigma^2(\mo,x) \Delta + \frac{1}{2}\sigma^2(\mo,x) \Gamma\Big), 
\end{equation} 
     with the \textit{Greeks} $\theta = \partial_{\mo}v(\mo,x)$, $\Delta = \partial_{x}v(\mo,x)$, and $\Gamma = \partial_{xx}v(\mo,x)$. 
     We therefore conclude from \cref{thm:main} that the price function $v$ is the unique viscosity solution of \eqref{eq:HJB12UVM}--\eqref{eq:HJB22UVM}. 
       Moreover, observe that the square volatility in the Hamiltonian \eqref{eq:HamiltonianTimer} can be factored out and  due to the  ellipticity condition \eqref{eq:WeakEllipticity}, the pricing PDE can be simplified to 
 \begin{subnumcases}{}
 -\partial_{\mo}v +  \frac{1}{2} \partial_{x}v - 
  \frac{1}{2}\partial_{xx}v
 = 0 & \text{ on }   
$\mathring{\sD_T}$, \label{eq:HJB12UVM}\\
      v = g, & \text{ on }  $\partial  \sD_{T} $.
      \label{eq:HJB22UVM} 
  \end{subnumcases}
In other words, the volatility used to price the option  is here irrelevant. 
This exhibits an important virtue of timer options, namely is that they do not entail any model risk with respect to volatility.  We note that this is no longer true when interest rates are nonzero; see \cite{carole}. 
     %============= interest rate =================%
%      Note also that when interest rates are zero ($r=0$), 
%      the PDE solved by $p$ simplifies to 
%  \begin{subnumcases}{}
%  -\partial_{\mo}p +  \frac{1}{2} \partial_{x}p - 
%   \frac{1}{2}\partial_{xx}p 
%  = 0 & \text{ on }   
% $\mathring{\sD_T}$, \label{eq:HJB12UVM}\\
%       p = g, & \text{ on }  $\partial  \sD_{T} $.
%       \label{eq:HJB22UVM} 
%   \end{subnumcases}
%   Indeed, the square volatility in the Hamiltonian \eqref{eq:HamiltonianTimer} in can be factored out due to the  ellipticity condition \eqref{eq:WeakEllipticity}.
% In other words, the volatility used to price the option  is in this case irrelevant. 
  %============= interest rate END =================%
  
\subsection{Exotic Options in the Uncertain Volatility Model  }
Following the uncertain volatility model introduced by \citet{Avellaneda}, we consider an  asset price $X^{\alpha}$ with dynamics $d X_t^{\alpha} = 
  \alpha_t X_t^{\alpha} \d W_t.$  
The control process $\alpha$
 represents the  volatility of the asset. The control set is $A = [\underline{a},\overline{a}] \subset  (0,\infty)$ meaning that volatility, albeit uncertain, is kept in a compact interval. In the occupied SDE \eqref{eq:OSDE}, we thus choose   $b\equiv 0$ and 
$\sigma(\mo,x,a) = ax$, so that  $\sigma$ satisfies the 
growth and Lipschitz conditions with constant $c_* = \overline{a}$.  

Consider an exotic option with payoff  $\varphi = \varphi(\cO_T^{\alpha},X_T^{\alpha})$ and assume calendar time for $\calO^{\alpha}$, i.e. $\lambda \equiv 1$. Observe that $J(\mo,x,\alpha) = \E^{\Q}_{\mo,x}
[\varphi(\cO_{T - |\mo|}^{\alpha},X_{T - |\mo|}^{\alpha})]$ is the price of the option under the volatility process $\alpha$. 
Suppose we are interested in the seller's price  $p:\sD_T\to\R$ of the option, which  is obtained  from the volatility $\alpha$ that \textit{maximizes} $J(\mo,x,\alpha)$. 
We therefore set $\ell \equiv 0$, $g = -\varphi$ in \eqref{eq:controProblem}, 
so  the price of the option  coincides with  the negative of the value function.  That is,  
\begin{equation*}
    p(\mo,x) = -v(\mo,x) =  
    \sup_{\alpha \in \sA}
    \E^{\Q}_{\mo,x}
[\varphi(\cO_{T - |\mo|}^{\alpha},X_{T - |\mo|}^{\alpha})].
\end{equation*}
The associated dynamic programming equation is 
\begin{subnumcases}{}
 \partial_{\mo}p + \sup_{a \in A}  
  \frac{1}{2}a^2 x^2\partial_{xx}p 
 = 0 & \text{ on }   
$\mathring{\sD_T}$, \label{eq:HJB12UVMa}\\
      p = \varphi, & \text{ on }  $\partial  \sD_{T} $,
      \label{eq:HJB22UVMb} 
  \end{subnumcases}
where \eqref{eq:HJB12UVMa} can be rewritten  as the (occupied) Black-Scholes-Barenblatt equation  \cite{Avellaneda,GuyonHL},
\begin{equation}
    \partial_{\mo}p +  \frac{1}{2}x^2V( 
     \partial_{xx}p)
 = 0,
\end{equation}
with $V(\Gamma) = (\underline{a}^2\mathds{1}_{\{\Gamma <0\}} + \overline{a}^2\mathds{1}_{\{\Gamma\ge 0\}} )\Gamma$. 
Applying \cref{thm:main} to $v = -p$, it is then immediate  to see that  $p$ is the unique viscosity solution of \eqref{eq:HJB12UVMa}--\eqref{eq:HJB22UVMb}. 

\appendix
%===================== APPENDIX =======================%

\section{Proof of \cref{prop:Ito}}
\label{sec:Itoproof}
%======== Proof of Ito formula ======%
\begin{proof} 
  Given $t\ge 0$, let   $0=t_0 < \cdots < t_N = t$ be a sequence of partitions of $[0,t]$ that satisfies    $\max_{n\le N}|t_{n} - t_{n-1}|\to 0$
  as $N\to \infty$. Consider  the piecewise constant process 
  \begin{equation}\label{eq:piecewiseconstantX}
  X^{N}_s = X_0\mathds{1}_{\{s = 0\}}+\sum_{n} X_{t_{n}}\mathds{1}_{(t_{n-1},t_{n}]}(s), \quad  s \le t.
  \end{equation}
Write also $\Lambda_t = \int_0^t\lambda_sds$ and introduce the discretized occupation flow
        $\cO^{N}_{s}  = \int_0^s \delta_{X_u^N} d\Lambda_u$. 
 We note that here $X^N$ and $\cO^N$ are not $\F$-adapted, however, $X^N_{t_n}$, $\cO^N_{t_n}$ are $\cF_{t_n}$-measurable, which we will actually use. We first establish that 
 \begin{align}
 \label{ONconv}
     \lim_{N\to \infty}\sup_{0\le s\le t}\varrho( \cO_{s} - \cO^{N}_{s}) = 0,\quad \Q\mbox{-almost surely}.
 \end{align} 
 %uniformly in  $s\le t$, $\Q-$almost surely.
  For each $k\in \N$ and $f_k$ introduced in \cref{ssec:cylindrical}, we compute 
  \begin{align*}
     |(\cO_{s} - \cO^N_{s})(f_k)|^2 &= \Big|\int_0^s (f_k(X_u) -  f_k(X_u^N))d\Lambda_u\Big|^2  \le \Lambda_s\int_0^s | f_k(X_u) -  f_k(X_u^N)|^2d\Lambda_u \\[0.5em]
     &\le  \Lambda_s  \int_0^s \lVert f_k\rVert_{\sC^1}^2 |X_u -  X_u^N|^2d\Lambda_u.
  \end{align*}
  As the last term  is nondecreasing in $s$ for all $k$, 
  we can  set $s=t$ to obtain 
    \begin{align*}
     \sup_{s\le t} \varrho( \cO_{s} - \cO^{N}_{s})^2 
     \le  \sum_{k} \Lambda_t\int_0^t \lVert f_k\rVert_{\sC^1}^2|X_u -  X_u^N|^2d\Lambda_u \le \Lambda_t\int_0^t |X_u -  X_u^N|^2d\Lambda_u.  
  \end{align*}
For $\Q-$almost all $\omega \in \Omega$, $s \mapsto X_s(\omega)$ is uniformly continuous on the compact $[0,t]$, hence it admits a  modulus of continuity $\frakm:\R_+\to\R_+$ (which depends on $\omega$). 
Consequently,
    \begin{align*}
     \int_0^t |X_u -  X_u^N|^2d\Lambda_u = \sum_{n\le N} \int_{t_{n-1}}^{t_n} |X_u -  X_{t_{n-1}}|^2d\Lambda_u  
     \le  \frakm\! \big(\max_{n\le N}|t_{n} - t_{n-1}|\big)^2 \Lambda_t \to 0, \quad N\to \infty,  
  \end{align*}
  which proves \eqref{ONconv}.
  %shows that $\lim_{N\to \infty}\sup_{s\le t} \varrho( \cO_{s} - \cO^{N}_{s})^2 =0$. 
 Together with the continuity of $v$, we also have   that 
 \begin{align}
 \label{vONconv}
   \lim_{N\to \infty}\sup_{0\le s\le t}\big| v(\calO_{s}^N,X_{s}) - v(\calO_{s},X_{s}) \big| =0.
 \end{align}
 %  $v(\calO_{s}^N,X_{s}) \to  v(\calO_{s},X_{s})$ as $N\to \infty$, uniformly in  $s\le t$. 
 Next, we write 
\begin{equation}\label{eq:telescopic}
      v(\calO_{t}^N,X_{t}) -  v(\calO_{0}^N,X_{0}) 
           = \sum_{n\le N}  ( v(\calO_{t_{n}}^{N},X_{t_{n}}) -  v(\calO_{t_{n-1}}^N,X_{t_{n-1}}) ).
 \end{equation}
%and define for each $n\le N$ the function $\psi_n(h,x) = v(\calO_{t_{n-1}}^N + h \delta_{X_{t_{n-1}}},X_{t_{n-1}} + x)$, $h\ge 0$, $x \in \R^{d}$. 
 Denote $\Delta^{\! \Lambda}_n := \Lambda_{t_n} - \Lambda_{t_{n-1}}$. Then
\begin{align*}
  &v(\calO_{t_{n}}^{N},X_{t_{n}}) -  v(\calO_{t_{n-1}}^N,X_{t_{n-1}})\\[0.25em]
  &= \big[v(\calO_{t_{n-1}}^{N}+ \Delta^{\! \Lambda}_n \delta_{X_{t_n}},X_{t_{n}}) -  v(\calO_{t_{n-1}}^{N},X_{t_{n}})\big] + \big[v(\calO_{t_{n-1}}^{N},X_{t_{n}}) - v(\calO_{t_{n-1}}^N,X_{t_{n-1}}) \big]\\
  &= \int_0^1 \partial_\mo v(\calO_{t_{n-1}}^{N}+ h\Delta^{\! \Lambda}_n\delta_{X_{t_n}},X_{t_{n}}) dh ~\Delta^{\! \Lambda}_n\\
  & + \int_{t_{n-1}}^{t_n} \Big[\nabla v(\calO_{t_{n-1}}^{N},X_s) \cdot dX_s + \frac{1}{2}\text{tr}(\nabla^2 v(\calO_{t_{n-1}}^N,X_s)\sigma_s \sigma_s^{\!\top})ds\Big].
\end{align*}
Here in the last equality, the first term is due to the definition and  continuity of $\partial_\mo v$, and the second term is due to the classical It\^{o} formula.  
Then we can rewrite  \eqref{eq:telescopic} as 
  \begin{align}
       & v(\calO_{t}^N,X_{t}) -  v(\calO_{0}^N,X_{0}) 
           = \sum_{n\le N} \int_0^1 \partial_\mo v(\calO_{t_{n-1}}^{N}+ h\Delta^{\! \Lambda}_n\delta X_{t_n},X_{t_{n}}) dh ~\Delta^{\! \Lambda}_n
           \nonumber\\[0.25em] 
        &\quad+ \sum_{n\le N} \int_{t_{n-1}}^{t_n} \Big[\nabla v(\calO_{t_{n-1}}^{N},X_s) \cdot dX_s + 
        \frac{1}{2}\text{tr}(\nabla^2 v(\calO_{t_{n-1}}^N,X_s)\sigma_s \sigma_s^{\!\top})ds\Big]. 
        \label{eq:dIto} 
   \end{align}
   Similarly to \eqref{ONconv}, one can easily show that 
   \begin{align*}
      \lim_{N\to\infty} \max_n \sup_{h\in [0,1]} \sup_{t_{n-1}\le s\le t_n} \rho\Big(\calO_{t_{n-1}}^{N}+ h\Delta^{\! \Lambda}_n \delta_{X_{t_n}} - \calO_s, X_{t_n}-X_s\Big)=0.
   \end{align*}
   Then, by \eqref{vONconv}, \eqref{eq:dIto}, and the regularity of $v$, we obtain \eqref{eq:ito} immediately. 
\end{proof}

\section{Proof of \cref{prop:Lipschitz}}
\label{sec:Lipschitz}

      We  show that $J(\mo,x,\alpha)$ is locally $1/2-$Hölder continuous   with constant $\hat{c}$ (to be determined) independent of $\alpha \in \sA$. It is then immediate to see that  $v(\mo,x)$ is  locally $1/2-$Hölder continuous  with the same constant.  
    Fix $(\mo,x),(\mo',x') \in \sD_T$, $\alpha \in \sA$, and assume without loss of generality that $\rho(\mo-\mo',x-x')\le 1$, i.e.,  $\delta = 1$ in the statement. Write $(\cO,X)$, $(\cO',X')$ for  the solution of \eqref{eq:OSDEOcc}$-$\eqref{eq:OSDE} controlled by $\alpha$ with  initial value $(\mo,x),(\mo',x')$, respectively. The exit times of $(\cO,X)$, $(\cO',X')$ from $\mathring{\sD_T}$ are  denoted by $\tau,\tau'$.  We also set  $\varphi_t = \varphi(\cO_t,X_t,\alpha_t)$, $\varphi_t' = \varphi(\cO_t',X_t',\alpha_t)$, $\varphi \in \{\lambda,b,\sigma,\ell\}$ and $\Lambda = \int_0^{\cdot} \lambda_sds$, $\Lambda' = \int_0^{\cdot} \lambda_s'ds$.  Due to  
    the triangle inequality   
\begin{align*}
   |J(\mo,x,\alpha)  - J(\mo,x',\alpha)| 
   \le \E^{\Q}[|\int_0^{\tau}\ell_sds - \int_0^{\tau'}\ell_s'ds|] + \E^{\Q}[|g(\cO_{\tau},X_{\tau}) - g(\cO_{\tau'}',X_{\tau'}')|],
\end{align*}
 we can estimate the terminal and running cost  separately. 

\subsection{Terminal Cost}

 Using the Lipschitz condition satisfied by $g$, then 
\begin{align*}
 \E^{\Q}[|g(\cO_{\tau},X_{\tau}) - g(\cO_{\tau'}',X_{\tau'}')|]
   \le c_* \E^{\Q}[\rho(\cO_{\tau}-\cO_{\tau'}',X_{\tau}-X_{\tau'}')],
\end{align*}
with the parabolic norm $\rho(\mo,x) = \sqrt{\varrho(\mo)^2 + |x|^2}$.  Next, consider  the decomposition, 
\begin{equation}\label{eq:appBDecomp}
    \rho(\cO_{\tau}-\cO_{\tau'}',X_{\tau}-X_{\tau'}') \le \rho(\cO_{\tau}-\cO_{\tau'},X_{\tau}-X_{\tau'}) + \rho(\cO_{\tau'}-\cO_{\tau'}',X_{\tau'}-X_{\tau'}').
\end{equation}
First, we estimate the rightmost term in  \eqref{eq:appBDecomp}. 

\begin{lemma} \label{lem:Gronwall} 
There exists a positive constant $C_1$ such that 
   \begin{equation}
        \E^{\Q}[\rho(\cO_{\tau'}-\cO_{\tau'}',X_{\tau'}-X_{\tau'}')^2] \le  C_1\rho(\mo-\mo',x-x')^2.
   \end{equation}
\end{lemma}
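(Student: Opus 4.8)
The plan is to derive a Gronwall inequality for
$$
u(t):=\E^{\Q}\Big[\sup_{s\le t\wedge\tau'}R_s^2\Big],\qquad R_s:=\rho(\cO_s-\cO_s',X_s-X_s'),
$$
and then evaluate at a time past $\tau'$. Recall from \cref{asm:standing} that $\tau'\le c_*T$, and that (after extending the coefficients off $\sD_T$ with the same growth and Lipschitz constants, which does not affect the value function) $\cO_s,X_s,\cO_s',X_s'$ are defined on $[0,\tau']$. For the Euclidean component, $X_s-X_s'=(x-x')+\int_0^s(b_r-b_r')\,dr+\int_0^s(\sigma_r-\sigma_r')\,dW_r$, so by the Lipschitz property of $b,\sigma$ (which gives $|b_r-b_r'|\le c_*R_r$ and $\lVert\sigma_r-\sigma_r'\rVert_F\le c_*R_r$), Doob's inequality, and Cauchy--Schwarz,
$$
\E^{\Q}\Big[\sup_{s\le t\wedge\tau'}|X_s-X_s'|^2\Big]\le 3|x-x'|^2+C\int_0^t u(r)\,dr,
$$
for a constant $C=C(c_*,T)$, where we used $\E^{\Q}[\int_0^{t\wedge\tau'}R_r^2\,dr]\le\int_0^t u(r)\,dr$.

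For the measure component I would expand in the cylindrical norm: for each $k$,
$$
(\cO_s-\cO_s')(f_k)=(\mo-\mo')(f_k)+\int_0^s\big(\lambda_rf_k(X_r)-\lambda_r'f_k(X_r')\big)\,dr,
$$
and split $\lambda_rf_k(X_r)-\lambda_r'f_k(X_r')=(\lambda_r-\lambda_r')f_k(X_r)+\lambda_r'\big(f_k(X_r)-f_k(X_r')\big)$. The first piece is at most $c_*R_r\lVert f_k\rVert_\infty$ by the Lipschitz property of $\lambda$. The key point for the second piece is that $\int_0^s\lambda_r'\,dr=|\cO_s'|-|\mo'|\le T$ for $s\le\tau'$, so that $\int_0^s\lambda_r'|f_k(X_r)-f_k(X_r')|\,dr\le T\lVert\nabla f_k\rVert_\infty\sup_{r\le s}|X_r-X_r'|$; here the rate $\lambda'$ is absorbed by the budget constraint rather than by its (unbounded) linear growth bound. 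Squaring via $(a+b+c)^2\le3(a^2+b^2+c^2)$, summing over $k$, and using $\sum_k\lVert f_k\rVert_\infty^2\le\sum_k\lVert f_k\rVert_{\sC^1}^2\le1$ together with $\sum_k\lVert\nabla f_k\rVert_\infty^2\le1$, yields, for $s\le t\wedge\tau'$,
$$
\varrho(\cO_s-\cO_s')^2\le 3\varrho(\mo-\mo')^2+3c_*^3T\int_0^s R_r^2\,dr+3T^2\sup_{r\le s}|X_r-X_r'|^2 .
$$

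Adding the two estimates and using $R_s^2=\varrho(\cO_s-\cO_s')^2+|X_s-X_s'|^2$ gives $u(t)\le C_2\,\rho(\mo-\mo',x-x')^2+C_3\int_0^t u(r)\,dr$ for constants $C_2,C_3$ depending only on $c_*$ and $T$. To make this rigorous I would first localize by $\varrho_n:=\inf\{t\ge0:R_t\ge n\}$, so that the truncated $u_n$ is finite, apply Gronwall's lemma to obtain $u_n(t)\le C_2e^{C_3t}\rho(\mo-\mo',x-x')^2$ uniformly in $n$, and then let $n\to\infty$ by monotone convergence (the path $t\mapsto R_t$ is continuous, so $\varrho_n\uparrow\infty$). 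Taking $t=c_*T\ge\tau'$ then gives $\E^{\Q}[R_{\tau'}^2]\le u(c_*T)\le C_1\,\rho(\mo-\mo',x-x')^2$ with $C_1:=C_2e^{C_3c_*T}$, which is the claim. The main obstacle, and the only place the argument departs from the classical SDE stability estimate, is the occupation-flow difference: the rate $\lambda'$ multiplying $|f_k(X_r)-f_k(X_r')|$ is only linearly bounded in $\rho(\cO_r',X_r')$ and would destroy the Gronwall structure; it is precisely the identity $\int_0^{\tau'}\lambda_r'\,dr\le T$ that resolves this without any nondegeneracy of $\sigma$.
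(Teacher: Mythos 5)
Your proof is correct and follows the same overall skeleton as the paper's: a Gr\"onwall estimate for $u(t)=\E^{\Q}\big[\sup_{s\le t\wedge\tau'}\rho(\cO_s-\cO_s',X_s-X_s')^2\big]$, Doob and Cauchy--Schwarz for the Euclidean component, the cylindrical expansion in the family $(f_k)$ for the measure component, and the budget identity $\int_0^{\tau'}\lambda_s'\,ds\le T$ to neutralize the unbounded rate. The genuine difference is in how the occupation increment is estimated. The paper uses the same initial split $|\lambda_s f_k(X_s)-\lambda_s' f_k(X_s')|\le \lVert f_k\rVert_\infty|\lambda_s-\lambda_s'|+\lambda_s'\lVert\nabla f_k\rVert_\infty|X_s-X_s'|$ but then merges it into the single bound $c_*\lVert f_k\rVert_{\sC^1}(1+\lambda_s')\rho(\cO_s-\cO_s',X_s-X_s')$ and controls the time integral via $\int_0^{t\wedge\tau'}(1+\lambda_s')\,ds\le c_*T+T$; converting the square of that weighted integral into the Gr\"onwall form $\int_0^t\sup_{u\le s\wedge\tau'}\rho^2\,ds$ is the least immediate step of the paper's write-up. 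You instead keep the two pieces separate: only the $|\lambda_s-\lambda_s'|$ piece (bounded by $c_*\lVert f_k\rVert_\infty R_s$) enters the Gr\"onwall kernel with respect to $ds$, while the $\lambda_s'|f_k(X_s)-f_k(X_s')|$ piece is absorbed wholesale by $T\lVert\nabla f_k\rVert_\infty\sup_{r\le s}|X_r-X_r'|$ and then fed back through the Euclidean estimate. This buys a completely transparent Gr\"onwall structure with no $\lambda'$ left inside the kernel, at the cost of slightly larger constants. Your explicit localization by the stopping times $\varrho_n$ to guarantee finiteness of $u$ before applying Gr\"onwall, and your remark that the coefficients must be understood beyond $\sD_T$ because $(\cO,X)$ may exceed its own exit time before $\tau'$ when $\tau<\tau'$, address points the paper leaves implicit, and both are handled correctly.
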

\begin{proof}
Recalling that  condition \eqref{eq:WeakEllipticity}  implies  $\tau' \le c_*T =: T_*$, then
\begin{equation}\label{eq:appBBound}
    \E^{\Q}[\rho(\cO_{\tau'}-\cO_{\tau'}',X_{\tau'}-X_{\tau'}')^2] \le R(T_*), \quad R(t) := \E^{\Q}[\sup_{s\le t\wedge \tau'}\rho(\cO_{s}-\cO_s',X_s-X_s')^2]. 
\end{equation}
Fix $t\le T_*$. From 
Cauchy-Schwarz and Doob inequalities, it is classical that 
\begin{align*}
    \frac{1}{3}\E^{\Q} [ \ \sup_{s\le t\wedge \tau' }|X_s-X_s'|^2 ] &\le  |x-x'|^2 + t\E^{\Q}[\int_0^{t\wedge \tau'} |b_s - b_s'|^2ds] + 4 \E^{\Q}[\int_0^{t\wedge \tau'}|\sigma_s - \sigma_s'|^2ds] \\
    &\le |x-x'|^2 + c_*^2(t+4)\int_0^t\E^{\Q}[\sup_{u\le s\wedge \tau'} \rho(\cO_{u}-\cO_u',X_u-X_u')^2]ds\\
    &\le |x-x'|^2 + c_*^2(T_*+4)\int_0^tR(s)ds.
\end{align*}
Moreover, for each $k\in \N$, the triangle inequality and occupation time formula  yields 
\begin{align}\label{eq:appBvarrho}
    \sup_{s\le t\wedge \tau'}|(\cO_{s}-\cO_s')(f_k)| &\le |(\mo-\mo')(f_k)| + \int_0^{t\wedge \tau'} | f_k(X_s)\lambda_s -  f_k(X_s')\lambda_s'|ds.
\end{align}
Next,  use $|f\lambda - f'\lambda'| \le |f||\lambda-\lambda'| + |f - f'| |\lambda'|$   to obtain 
\begin{align}
  | f_k(X_s)\lambda_s -  f_k(X_s')\lambda_s'| &\le |f_k(X_s)||\lambda_s-\lambda_s'|  + |f_k(X_s) - f_k(X_s')| \lambda_s'  \nonumber\\[0.5em]
    &\le \lVert f_k\rVert_{\infty}|\lambda_s-\lambda_s'| + \lVert \nabla f_k\rVert_{\infty}|X_s - X_s'| \lambda_s'   \nonumber\\[0.5em]
    &\le c_*\lVert f_k\rVert_{\sC^1} (1+ \lambda_s')\rho(\cO_{s}-\cO_s',X_{s}-X_s'),\label{eq:appBintegrand}
\end{align}
using the Lipschitz property of $\lambda$ in the last inequality. 
Using $\int_0^{t\wedge \tau'}\lambda_s' ds \le  \Lambda_{\tau'}' = |\cO_{\tau'}'| = T$,   $\tau'\le T_*$, and  \eqref{eq:appBvarrho}, we obtain by integrating \eqref{eq:appBintegrand} that 
$$ \frac{1}{2}\sup_{s\le t\wedge \tau'}|(\cO_{s}-\cO_s')(f_k)|^2 \le |(\mo-\mo')(f_k)|^2 + c_*^2\lVert f_k\rVert_{\sC^1}^2 (T_*  + T)^2\int_0^t\sup_{u\le s\wedge \tau'} \rho(\cO_{u}-\cO_u',X_u-X_u')^2ds.$$
summing over $k$ and using  $\sum\nolimits_k\lVert f_k\rVert_{\sC^1}^2\le 1$ leads to 
\begin{align*}
  \frac{1}{2}\E^{\Q}[\sup_{s\le t\wedge \tau'}\varrho(\cO_{s}-\cO_s')^2]  
  \le \varrho(\mo - \mo')^2 +   c_*^2 (T  + T_*)^2  \int_0^t R(s) ds.
\end{align*}
Altogether, we have established  that 
\begin{align*}
    R(t)&\le \E^{\Q}[\sup_{s\le t\wedge \tau'}\varrho(\cO_{s}-\cO_s')^2] + \E^{\Q}[\sup_{s\le t\wedge \tau'}|X_{s}-X_s'|^2] 
    \le 3\rho(\mo-\mo',x-x')^2 + C\int_0^{t} R(s) ds
\end{align*}
where $C = c_*^2[3(T_*+4) + 2(T+T_*)^2]$. We therefore conclude from Grönwall's Lemma that 
$$ \E^{\Q}[\rho(\cO_{\tau'}-\cO_{\tau'}',X_{\tau'}-X_{\tau'}')^2] \le R(T_*) \le C_1\rho(\mo-\mo',x-x')^2, \qquad C_1 = 3e^{C  T_*}.$$ 
    
\end{proof}

We now  treat the remaining term in \eqref{eq:appBDecomp}, namely $\rho(\cO_{\tau}-\cO_{\tau'},X_{\tau}-X_{\tau'})$. First observe that 
\begin{equation}\label{eq:appB2ndTerm}
    \E^{\Q}[\rho(\cO_{\tau}-\cO_{\tau'},X_{\tau}-X_{\tau'})] \le  \E^{\Q}[\varrho(\cO_{\tau}-\cO_{\tau'})]+ \E^{\Q}[|X_{\tau}-X_{\tau'}|].
\end{equation}
 Let $\underline{\tau} := \tau \wedge \tau'$ and $\overline{\tau} := \tau \vee \tau'$. Using the linear growth of $\lambda$,  then for all $k\ge 1$, 
 \begin{align*}
    \frac{1}{2}[|(\cO_{\tau}-\cO_{\tau'})(f_k)|^2 - |(\mo-\mo')(f_k)|^2] \le    \big(\int_{\underline{\tau}}^{\overline{\tau}} | f_k(X_s)|\lambda_sds \big)^2
    \le \lVert f_k\rVert_{\sC^1}^2 Z_*^2     |\tau-\tau'|^2 ,
\end{align*}
where $Z_* = c_*\sup_{t\le T_*}(1+\rho(\cO_t,X_t)) $. 
Summing over $k$ and rearranging yields
\begin{equation}\label{eq:boundVARRHO}
   \E^{\Q}[\varrho(\cO_{\tau}-\cO_{\tau'})^2] \le 2\varrho(\mo-\mo')^2 + 2C^2 \rVert\tau-\tau'\rVert_{L^2(\Q)}^2,
\end{equation}
with the finite constant $C = \lVert Z_* \rVert_{L^2({\Q})} $. For the rightmost term in \eqref{eq:appB2ndTerm}, we first note that 
\begin{align*}
    \E^{\Q}[|X_{\tau}-X_{\tau'}|] &\le \E^{\Q}\big[|\int_{\underline{\tau}}^{\overline{\tau}} b_sds|\big] + \E^{\Q}\big[|\int_{\underline{\tau}}^{\overline{\tau}}\sigma_sdW_s|\big]. 
\end{align*}
Using  the linear growth of $b,\sigma$ and Burkholder-Davis-Gundy (BDG) inequality \cite[Chapter IV]{RevuzYor}, it follows that 
\begin{align}
\E^{\Q}\big[|\int_{\underline{\tau}}^{\overline{\tau}} b_sds|\big] &\le \E^{\Q}\big[\int_{\underline{\tau}}^{\overline{\tau}} |b_s|ds\big] \le  \E^{\Q}[Z_*|\tau-\tau'|]\le C \rVert\tau-\tau'\rVert_{L^2(\Q)}, \label{eq:bIneq}\\[0.5em]
\E^{\Q}\big[|\int_{\underline{\tau}}^{\overline{\tau}}\sigma_sdW_s|\big] 
&\le C_1^{\textsc{\tiny BDG}}\E^{\Q}\big[ \lVert \sigma \rVert_{L^2([\underline{\tau},\overline{\tau}])}\big] \le C_1^{\textsc{\tiny BDG}}\E^{\Q}[Z_*|\tau-\tau'|^{1/2} ]\le \tilde{C} \ \rVert\tau-\tau'\rVert_{L^1(\Q)}^{1/2}, \label{eq:sigIneq} %\sqrt{\int_{\underline{\tau}}^{\overline{\tau}}\sigma_s^2ds}
\end{align}
with $\tilde{C} = C C_1^{\textsc{\tiny BDG}}$. 
We now prove the following lemma.
\begin{lemma}\label{lem:tau}
There exists  $C_2>0$ such that $
    \rVert\tau-\tau'\rVert_{L^2(\Q)} \le C_2\rho(\mo-\mo',x-x')$. 
\end{lemma}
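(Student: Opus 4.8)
The plan is to read off the value of $|\tau-\tau'|$ directly from the clocks. Since $\tau=\inf\{t:|\cO_t|\ge T\}$ with $|\cO_t|=|\mo|+\int_0^t\lambda_s\,ds$ continuous and strictly increasing (recall $\lambda_s=\lambda(\cO_s,X_s,\alpha_s)\ge 1/c_*$ by \eqref{eq:WeakEllipticity}), we have $|\cO_\tau|=|\cO'_{\tau'}|=T$, i.e.
\[
|\mo|+\int_0^{\tau}\lambda_s\,ds=|\mo'|+\int_0^{\tau'}\lambda_s'\,ds=T .
\]
Assuming without loss of generality $\tau\ge\tau'$ (the other case is symmetric, with $\tau'$ below replaced by $\tau$), subtracting the two identities gives $\int_{\tau'}^{\tau}\lambda_s\,ds=(|\mo'|-|\mo|)+\int_0^{\tau'}(\lambda_s'-\lambda_s)\,ds$, and the lower bound $\lambda_s\ge 1/c_*$ on $[\tau',\tau]\subset[0,\tau]$ yields the pathwise inequality
\[
\frac{\tau-\tau'}{c_*}\le \big|\,|\mo|-|\mo'|\,\big|+\int_0^{\underline\tau}|\lambda_s-\lambda_s'|\,ds,\qquad \underline\tau:=\tau\wedge\tau'.
\]

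Next I would estimate the two terms on the right. For the mass defect, I would use that the separating family from \cref{ssec:cylindrical} contains a nonzero constant $f_0\equiv c_f$: then $(\mo-\mo')(f_0)=c_f(|\mo|-|\mo'|)$, so $\big|\,|\mo|-|\mo'|\,\big|=|(\mo-\mo')(f_0)|/|c_f|\le \varrho(\mo-\mo')/|c_f|\le \rho(\mo-\mo',x-x')/|c_f|$. (This is precisely the step where it matters that, although $\varrho$ is weaker than total variation, it still dominates the total-mass discrepancy.) For the rate term, since $(\cO_s,X_s),(\cO'_s,X'_s)\in\sD_T$ for $s\le\underline\tau$ and $\underline\tau\le\tau'\wedge T_*$ with $T_*=c_*T$, the Lipschitz property of $\lambda$ in \cref{asm:standing} gives
\[
\int_0^{\underline\tau}|\lambda_s-\lambda_s'|\,ds\le c_*\int_0^{\underline\tau}\rho(\cO_s-\cO'_s,X_s-X'_s)\,ds\le c_*T_*\sup_{s\le T_*\wedge\tau'}\rho(\cO_s-\cO'_s,X_s-X'_s).
\]

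Combining the two bounds and taking $L^2(\Q)$-norms (the deterministic term passes through unchanged), I would invoke the Grönwall estimate established inside the proof of \cref{lem:Gronwall}, namely $R(T_*):=\E^{\Q}\big[\sup_{s\le T_*\wedge\tau'}\rho(\cO_s-\cO'_s,X_s-X'_s)^2\big]\le C_1\,\rho(\mo-\mo',x-x')^2$, to conclude
\[
\|\tau-\tau'\|_{L^2(\Q)}\le c_*\Big(\tfrac{1}{|c_f|}+c_*T_*\sqrt{C_1}\Big)\,\rho(\mo-\mo',x-x')=:C_2\,\rho(\mo-\mo',x-x').
\]
I do not expect a genuine obstacle here; the proof is short once the clock identity $|\cO_\tau|=|\cO'_{\tau'}|=T$ is exploited. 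The only points needing care are (i) controlling the total-mass defect $\big|\,|\mo|-|\mo'|\,\big|$, which is not controlled by $\varrho$ for a general separating family and is exactly why the constant $f_0$ was included, and (ii) the bookkeeping that all uses of \eqref{eq:WeakEllipticity} and of the Lipschitz estimate on $\lambda$ are confined to the interval $[0,\underline\tau]$, where both occupied processes still lie in $\sD_T$ so that those hypotheses apply.
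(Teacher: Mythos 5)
Your proof is correct and follows essentially the same route as the paper: the nondegeneracy $\lambda\ge 1/c_*$ converts the clock discrepancy into a bound on $|\tau-\tau'|$, the constant member $f_0$ of the separating family controls $\bigl||\mo|-|\mo'|\bigr|$ by $\varrho(\mo-\mo')$, and the Lipschitz property of $\lambda$ together with the Gr\"onwall bound $R(T_*)\le C_1\rho(\mo-\mo',x-x')^2$ from \cref{lem:Gronwall} finishes the estimate. The only (harmless) difference is cosmetic: you subtract the exit identities $\Lambda_\tau=\Lambda'_{\tau'}=T$ directly and work on $[0,\underline\tau]$, whereas the paper runs the clock forward from $\tau'$ to get $|\tau-\tau'|\le c_*\sup_{t\le T_*}|\Lambda_t-\Lambda_t'|$ before applying the same three estimates.
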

\begin{proof} 
Let $\Delta_*^{\! \Lambda} :=\sup_{t\le T_*} |\Lambda_{t} - \Lambda_{t}'|$, where we recall that $\Lambda$ (respectively $\Lambda'$) coincides with the total mass process $|\calO|$ (resp. $|\calO'|$). Since $\Lambda_{\tau'}' = T$, then $\Lambda_{\tau'} = \Lambda_{\tau'}' + (\Lambda_{\tau'} - \Lambda_{\tau'}')\ge T - \Delta_*^{\! \Lambda}$. 
Moreover, the nondegeneracy condition \eqref{eq:WeakEllipticity} implies that   
$$ \Lambda_{\tau' + s} = \Lambda_{\tau'} + \int_{\tau'}^{\tau'+s}\lambda_{u}du \ge T-\Delta_*^{\! \Lambda} + s/c_*, \quad \forall s\ge 0.$$ 
Choosing $s = c_*\Delta_*^{\! \Lambda}$ therefore gives $\tau \le \tau'+c_*\Delta_*^{\! \Lambda}$. Similarly,  $\tau' \le \tau+c_*\Delta_*^{\! \Lambda}$, which shows that 
    \begin{equation} \label{eq:tauIntermediate}
    |\tau-\tau'| \le c_*\sup_{t\le T^*}  |\Lambda_{t} - \Lambda_{t}'|,  \qquad \Q-a.s.
\end{equation}
Next, noting that $\Lambda_0 = |\mo|$, $\Lambda_0' = |\mo'|$, then for all $t\le T_*$, $   |\Lambda_t - \Lambda_t'| \le  ||\mo| -|\mo'|| + \int_0^t|\lambda_s-\lambda_s'|ds. $ 
Recalling that the first element of the separating family $(f_k)_{k\in \N}$ is constant, say $f_0 \equiv c_0$ with $c_0\in (0,1)$, we have  
$$||\mo| - |\mo'|| =  |\int_{\R^d}(\mo -\mo')(dx)|  = c_0^{-1}|(\mo -\mo')(f_0)|\le c_0^{-1}\varrho(\mo -\mo').$$
Moreover, we have from the Lipschitz continuity of $\lambda$ that 
\begin{align*}
    \int_0^t|\lambda_s-\lambda_s'|ds \le c_*\int_0^t\rho(\cO_s - \cO_s',X_s - X_s')ds \le c_*T_* \sup_{s\le T_*}\rho(\cO_s - \cO_s',X_s - X_s').
\end{align*}
Using similar arguments as in  the proof of  \cref{lem:Gronwall}, we obtain  
\begin{equation*}
    \frac{1}{2}\E^{\Q}[\sup_{t\le T_*}|\Lambda_t - \Lambda_t'|^2] \le  c_0^{-2}\varrho(\mo -\mo')^2 + c_*^2T_*^2C_1\rho(\mo-\mo',x-x')^2.
\end{equation*}
Together with \eqref{eq:tauIntermediate}, this yields 
$$\rVert\tau-\tau'\rVert_{L^2(\Q)}\le c_*\rVert\sup_{t\le T^*}  |\Lambda_t - \Lambda_t'|\rVert_{L^2(\Q)}\le C_2\rho(\mo-\mo',x-x'), \quad C_2 =c_*\sqrt{2(c_0^{-2} + c_*^2T_*^2C_1)}.$$ 
\end{proof}
Combining the above Lemma with equations \eqref{eq:appB2ndTerm}, \eqref{eq:boundVARRHO} yields
\begin{equation*}
    \E^{\Q}[\varrho(\cO_{\tau}-\cO_{\tau'})] \le
     (2\varrho(\mo-\mo')^2 + 2C^2 C_2 \rho(\mo-\mo',x-x')^2)^{1/2} \le (2 + 2C^2 C_2)^{1/2}\rho(\mo-\mo',x-x').
\end{equation*} 
Similarly, using \eqref{eq:bIneq}, \eqref{eq:sigIneq},  and $\rVert\tau-\tau'\rVert_{L^1(\Q)} \le \rVert\tau-\tau'\rVert_{L^2(\Q)}$, we have 
\begin{equation*}
    \E^{\Q}[|X_{\tau}-X_{\tau'}|] \le C_2 \varrho(\mo-\mo',x-x')+ (C_2 \varrho(\mo-\mo',x-x'))^{1/2}.
\end{equation*}
From $\rho(\mo-\mo',x-x')\le 1$ and the inequality $\rho\le \rho^{1/2}$,  $\rho\in [0,1]$, we have thus shown the existence of a constant $\hat{c}_{g}$ that depends on $C_1,C_2,$ and $C = \lVert Z_* \rVert_{L^2({\Q})}$ such  that
\begin{align*}
    \E^{\Q}[|g(\cO_{\tau},X_{\tau}) - g(\cO_{\tau'}',X_{\tau'}')|]
   & \le c_*\left(\E^{\Q}[\rho(\cO_{\tau}-\cO_{\tau'},X_{\tau}-X_{\tau'})] + \E^{\Q}[\rho(\cO_{\tau'}-\cO_{\tau'}',X_{\tau'}-X_{\tau'}')] \right) \\ 
  % & \le + c_* C_1^{1/2}\rho(\mo-\mo',x-x') 
   &\le \hat{c}_{g}\  \rho(\mo-\mo',x-x')^{1/2}.
\end{align*}
    
\subsection{Running Cost} 
Suppose that $\tau' \le \tau $. Then, the growth and Lipschitz property of $\ell,\ell'$ implies that 
\begin{align*}
    |\int_0^{\tau}\ell_sds - \int_0^{\tau'}\ell_s' ds| &\le   \int_0^{\tau' }|\ell_s - \ell_s'|ds
    + \int_{\tau' }^{\tau}|\ell_s|ds \\ 
    &\le c_*T_* \sup_{s\le  \tau\wedge \tau '}\rho(\cO_{s}-\cO_{s}',X_{s}-X_{s}') +  Z_*' |\tau'- \tau| ,
\end{align*}
where $Z_*' =c_*\sup_{t\le T_*}(1+\rho(\cO_t,X_t)) $. 
The case $\tau\le \tau'$ follows analogously, with $Z_*$ in lieu of $Z_*'$. 
Taking expectation, we obtain 
\begin{align*}
        \E^{\Q}[|\int_0^{\tau}\ell_sds - \int_0^{\tau'}\ell_s'ds|] &\le c_*T_*  \E^{\Q}[ \sup_{s\le  \tau\wedge \tau'}\rho(\cO_{s}-\cO_{s}',X_{s}-X_{s}')] +  C \lVert \tau - \tau'\rVert_{L^2(\Q)},
\end{align*}
with  $C = \lVert Z_* \vee  Z_*' \rVert_{L^2({\Q})} $. Defining $\hat{c}_{\ell} = c_*T_*C_1 + C C_2$, we have from \cref{lem:Gronwall,lem:tau} that 
$$\E^{\Q}[|\int_0^{\tau}\ell_sds - \int_0^{\tau'}\ell_s'ds|]\le \hat{c}_{\ell} \rho(\mo-\mo',x-x')\le \hat{c}_{\ell} \rho(\mo-\mo',x-x')^{1/2},$$
which 
 completes the proof of \cref{prop:Lipschitz} with the constant $\hat{c} = \hat{c}_{g}+ \hat{c}_{\ell}$.

\bibliographystyle{abbrvnat}
\bibliography{ref.bib}

\end{document}